\numberwithin{equation}{section}
\newtheorem{thm}{Theorem}[section]
\newtheorem{lem}[thm]{Lemma}
\newtheorem{prop}[thm]{Proposition}
\theoremstyle{definition}
 \newtheorem{defi}[thm]{Definition}
\theoremstyle{remark}
\newcommand{\R}{\mathbb{R}}
\newcommand{\N}{\mathbb{N}}
\newcommand{\hn}{\mathbb{H}^n}
\newcommand{\Cn}{\mathbb{C}^n}
\newcommand{\Leb}{\mathcal{L}^{2n+1}}
\newcommand{\Rn}{\mathbb{R}^n}
\newcommand{\p}{\mathcal P}
\newcommand{\spt}{\operatorname{spt}}
\newcommand{\card}{\operatorname{card}}
\newcommand{\diam}{\operatorname{diam}}
\newcommand{\leb}{\operatorname{Leb}}
\newcommand{\im}{\operatorname{Im}}
\newcommand{\1}{{\rm 1\mskip-4mu l}}
\newcommand{\spa}{\operatorname{span}}
\newcommand{\e}{\varepsilon}
\newcommand{\g}{\gamma}
\newcommand{\s}{\sigma}
\newcommand{\G}{\Gamma}
\newcommand{\Om}{\Omega}
\begin{document}

\title{Monge's transport problem in the Heisenberg group}

\author[L. De Pascale]{L. De Pascale}
\address[L. De Pascale]{Universit\'a di Pisa, Dipartimento di Matematica
  Applicata, Via Buonarroti 1/c, 56127
  Pisa, Italy}

\author[S. Rigot]{S. Rigot}
\address[S. Rigot]{Universit\'e de Nice Sophia-Antipolis, Laboratoire J.-A. Dieudonn\'e, CNRS-UMR 6621, Parc Valrose, 06108 Nice cedex 02, France}

\keywords{Monge transport problem, Optimal transport map, Monge-Kan\-torovich problem, Heisenberg group, Sub-Riemannian distance}

\subjclass[2000]{49Q20 (53C17)}

\begin{abstract}
We prove the existence of solutions to Monge's transport problem between two compactly supported Borel probability measures in the Heisenberg group equipped with its Carnot-Carath\'eodory distance assuming that the initial measure is absolutely continuous with respect to the Haar measure of the group. 
 
\end{abstract}

\maketitle

\section{Introduction}

The classical Monge's transport problem refers to the problem of moving one distribution of mass onto another as efficiently as possible, where the efficiency criterion is expressed in terms of the average distance transported. It originates from a paper by G. Monge, \textit{M\'emoire sur la th\'eorie des d\'eblais et des remblais}, in 1781. Rephrased and generalized in modern mathematical terms, we are given two Borel probability measures $\mu$ and $\nu$ on a metric space $(X,d)$ and we want to minimize
\begin{equation*}
 T \mapsto \int_X d(x,T(x))\,d\mu(x)
\end{equation*}
among all transport maps $T$ from $\mu$ to $\nu$, i.e., all $\mu$-measurable maps $T:X\to X$ such that $T_\#\mu=\nu$, meaning that $\nu(B)=\mu(T^{-1}(B))$ for all Borel set $B$. 

In this paper, we are interested in Monge's transport problem in the Heisenberg group $(\hn,d)$ equipped with its Carnot-Carath\'eodory distance. We prove the existence of an optimal transport map between two compactly supported Borel probability measures $\mu$ and $\nu$ on $\hn$ assuming that the first measure $\mu$ is absolutely continuous with respect to the Haar measure $\Leb$ of $\hn$. 

\begin{thm} \label{mainthm}
Let $\mu$ and $\nu$ be two compactly supported Borel probability measures on $\hn$. Assume that $\mu \ll \Leb$. Then there exists an optimal transport map solution to Monge's transport problem between $\mu$ and $\nu$, i.e., a $\mu$-measurable map $T:\hn\rightarrow\hn$ such that $T_\#\mu=\nu$ and
\begin{equation*}
\int_{\hn} d(x,T(x))\,d\mu(x) = \inf_{S_\#\mu=\nu} \int_{\hn} d(x,S(x))\,d\mu(x).
\end{equation*}
\end{thm}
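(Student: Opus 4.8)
The plan is to follow the by-now classical strategy for proving existence of optimal maps for the Monge problem with a distance cost, pioneered by Evans--Gangbo, Ambrosio, Caffarelli--Feldman--McCann, Trudinger--Wang and others, but carried out in the sub-Riemannian setting of $\hn$. The starting point is Kantorovich duality: the relaxed Monge--Kantorovich problem of minimizing $\pi \mapsto \int d(x,y)\,d\pi(x,y)$ over transport plans $\pi$ with marginals $\mu$ and $\nu$ always admits a minimizer, and by duality there is a $1$-Lipschitz Kantorovich potential $u:\hn\to\R$ (Lipschitz with respect to the Carnot--Carath\'eodory distance $d$) such that any optimal plan $\pi$ is concentrated on the set of pairs $(x,y)$ with $u(x)-u(y)=d(x,y)$. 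Since the infimum in Monge's problem equals the minimum of the relaxed problem (transport plans can be approximated by maps when $\mu\ll\Leb$), it suffices to produce \emph{one} optimal plan that is induced by a map. The idea is to build this map along \emph{transport rays}, i.e., maximal geodesic segments (for the Carnot--Carath\'eodory distance) along which $u$ decreases at unit speed; an optimal plan must move mass only along such rays, so the problem decomposes into a family of one-dimensional transport problems along the rays, each of which is solved by the monotone rearrangement.

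The key steps, in order, are as follows. \emph{Step 1: regularity of the potential and the ray structure.} I would analyze the transport set, the union of the (non-degenerate) transport rays, and show that two distinct transport rays can meet only at their endpoints; this uses the structure of geodesics in $\hn$ (geodesics joining two points, away from the center, are unique and depend nicely on their endpoints) together with the fact that $u$ is $1$-Lipschitz and strictly decreasing along each ray. \emph{Step 2: a co-area / disintegration of $\mu$ along rays.} Parametrize the rays by a measurable "transversal" and disintegrate the measure $\mu$ (restricted to the transport set; one checks $\mu$ gives no mass to the complement) as an integral of measures $\mu_R$ carried by the individual rays $R$. The crucial point here is that each conditional measure $\mu_R$ must be shown to be non-atomic — equivalently, that $\mu$ charges no single ray with positive mass and, more delicately, that the "initial/final points" of rays form a $\mu$-null set. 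In the Heisenberg setting this is exactly where the absolute continuity $\mu\ll\Leb$ is used, via a Lipschitz change of variables adapted to the sub-Riemannian geometry showing that these degenerate sets are Lebesgue-null. \emph{Step 3: one-dimensional transport along each ray.} On a fixed ray $R$, push forward $\mu_R$ and the corresponding disintegration $\nu_R$ of $\nu$ by the arclength parametrization to get measures on an interval; since $\mu_R$ is non-atomic, the monotone (non-decreasing along the direction of decrease of $u$) transport map $T_R$ exists and is optimal for the one-dimensional cost. \emph{Step 4: glue and verify.} Define $T$ ray-by-ray, check joint measurability of $R\mapsto T_R$, assemble the plan $\pi=\int (\mathrm{id},T_R)_\#\mu_R$, verify $T_\#\mu=\nu$ by the disintegrations, and verify optimality because $\pi$ is concentrated on $\{u(x)-u(y)=d(x,y)\}$ and has the right marginals, hence realizes the Kantorovich minimum.

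I expect the main obstacle to be Step 2, and within it the proof that $\mu$ assigns zero mass to the set of endpoints of transport rays (and to the set where the ray structure degenerates). In the Euclidean case this follows from a now-standard argument that the endpoint set is a Lipschitz image of a lower-dimensional set, hence Lebesgue-null; in $\hn$ one must replace Euclidean Lipschitz maps and the Euclidean co-area formula by their Carnot--Carath\'eodory analogues, taking care that the Carnot--Carath\'eodory distance is only locally Hölder-$1/2$ equivalent to a Euclidean distance in the vertical direction, so naive dimension counting can fail. A secondary technical difficulty is establishing enough regularity of geodesics in $\hn$ — uniqueness, and continuous (indeed Lipschitz on compact subsets away from the center) dependence on endpoints — to make the ray parametrization measurable; this will require invoking or reproving known facts about the sub-Riemannian geodesics of the Heisenberg group. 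Once these geometric-measure-theoretic facts are in place, Steps 1, 3, and 4 are essentially the abstract decomposition argument and proceed as in the Euclidean literature.
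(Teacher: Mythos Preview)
Your plan follows the Sudakov-type ray-decomposition strategy, which is \emph{not} the route the paper takes. The paper deliberately avoids disintegration along transport rays. Instead it runs a variational approximation: for each $\e>0$ one minimizes a perturbed functional involving the cost $c_\e(x,y)=d(x,y)+\e\,d(x,y)^2$ together with penalties forcing the second marginal to be finitely atomic and close to $\nu$. Weak limits of these approximate minimizers are shown to lie in $\Pi_2(\mu,\nu)$, the set of $d$-optimal plans that also minimize $\int d^2\,d\gamma$ among optimal plans; this secondary optimality yields a one-dimensional monotonicity along rays (Lemma~\ref{pi2}). The Measure Contraction Property of $\hn$ gives uniform $L^\infty$ bounds on the densities of the interpolated measures $(e_t\circ S)_\sharp\gamma_\e$, and these bounds survive in the limit. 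Combined with purely measure-theoretic Lebesgue-point lemmas (Section~\ref{lebpoints}), one obtains a lower density estimate for the transport set (Lemma~\ref{mainlemma}) which, together with the ray monotonicity, forces the selected plan to be concentrated on a graph. No disintegration, no analysis of ray endpoints, no absolute continuity of conditional measures is needed.

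Your outline could in principle be completed---indeed a Sudakov-type argument in geodesic spaces is carried out in the preprint~\cite{bianc-cav} cited by the paper---but your Step~2 contains a genuine gap as written. The issue is not merely that the set of ray endpoints is $\Leb$-null; the historically delicate point (and the actual error in Sudakov's original argument) is that the \emph{conditional} measures $\mu_R$ in the disintegration must be absolutely continuous with respect to $\mathcal{H}^1$ on each ray, and this does not follow automatically from $\mu\ll\Leb$. In $\R^n$ this is repaired by a countably Lipschitz change of variables that straightens the rays, so that Fubini applies. In $\hn$ the Carnot--Carath\'eodory distance is only locally H\"older-$1/2$ equivalent to the Euclidean one in the vertical direction, so ``Lipschitz image of a lower-dimensional set'' arguments do not transfer directly, and you have not indicated what replaces them. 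Resolving this requires substantially more than what you have sketched; the paper's approach was designed precisely to sidestep this difficulty.
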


Monge's transport problem in $\Rn$ equipped with a distance induced by a norm has already been widely investigated. A first attempt to solve this problem goes back to the work of Sudakov \cite{sudakov}. It was however discovered some years later that the proof in \cite{sudakov} was not completely correct. In \cite{evans-gangbo} a PDE-based alternative to Sudakov's approach has been developed. The authors prove the existence of an optimal transport map in $\Rn$ equipped with the Euclidean norm under the assumptions that $\spt \mu \cap \spt \nu = \emptyset$, $\mu$, $\nu \ll \mathcal{L}^{n}$ with Lipschitz densities with compact support. Existence results for general absolutely continuous measures $\mu$, $\nu$ with compact support have been obtained independently in \cite{cfm} and \cite{TrudWang} and have been extended to a Riemannian setting in \cite{FeldMcC}. The existence of a solution to Monge's transport problem assuming only that the initial measure $\mu$ is absolutely continuous has been proved in \cite{ambrosio}, see also \cite{ap}, \cite{akp}, \cite{caravenna1}. All these later proofs roughly involve a Sudakov-type dimension reduction argument, via different technical implementations though, and require some regularity assumptions about the norm $\Rn$ is endowed with. For some time, it seemed that there were indeed some borderline cases about the norms that could not be attacked through these techniques. 

Recently another approach that does not go through Sudakov-type arguments and in particular does not require disintegration of measures has been developed in \cite{champion-dePascale}, see also \cite{champion-dePascale-first}, to solve Monge's transport problem for general norms in $\Rn$. This approach relies on rather simple but powerful density results. In the present paper we follow closely this approach. We basically show that a very similar strategy can be implemented in the context of the Heisenberg group equipped with its Carnot-Carath\'eodory distance. The main features that play a role in this approach are that $(\hn, d, \Leb)$ is a doubling polish metric measure space, a non-branching geodesic space and satisfies a so-called Measure Contraction Property. It is very likely that this approach can be extended to more general metric measure spaces, see Section~\ref{extensions}. We have chosen however to present the particular case of the Heisenberg group for simplicity, this space being moreover an instructive explicit example of non Riemannian space.~\footnote{The preprint~\cite{bianc-cav} which appeared during the completion of the present paper also addresses Monge's transport problem in metric spaces with a geodesic distance using a Sudakov-type dimension reduction argument and disintegration of measures.} 

The strategy starts by considering the nowadays classical relaxation of Monge's transport problem proposed by Kantorovich. In Kantorovich' formulation one considers transport plans, i.e., Borel probability measures on $X \times X$ with first and second marginals $\mu$ and $\nu$ respectively. Denoting by $\Pi(\mu,\nu)$ the class of all transport plans, one wants to minimize
\begin{equation*}
 \g \mapsto \int_{X\times X} d(x,y)\,d\g(x,y)
\end{equation*}
among all $\g \in \Pi(\mu,\nu)$. Due to the linearity of the constraint $\g \in \Pi(\mu,\nu)$, weak topologies provide existence of optimal transport plans. As a classical fact it turns out that whenever an optimal transport plan is induced by a $\mu$-measurable map $T$, i.e., can be written in the form $(I \otimes T)_\sharp\mu$ where $(I \otimes T)(x) := (x,T(x))$, then $T$ is an optimal transport map between $\mu$ and $\nu$ solution to Monge's transport problem. We follow here this scheme seeking after optimal transport plans that will be shown to be eventually induced by $\mu$-measurable maps.

In our present context we first prove that any optimal transport plan is concentrated on a set of pairs of points that are connected by a unique minimal curve and that these transport rays cannot bifurcate (see Section~\ref{sect:optplanning}). Next, following ideas already introduced in the literature and more specifically here inspired by~\cite{sant}, one introduces variational approximations (see Section~\ref{varapprox}). This procedure allows to select optimal transport plans with specific properties. These transport plans will eventually be proved to be induced by $\mu$-measurable maps. This procedure is here essentially twofold. On one hand it allows to select optimal transport plans that are solution to a secondary variational problem. This secondary variational problem prescribes the geometry of transport rays. The selected transport plans are indeed shown to be monotonic along transport rays (see Lemma~\ref{pi2}). On the other hand, given a transport plan, one can in our context interpolate between its first and second marginal in a natural way (see Subsection~\ref{sect-interpolation}). Absolute continuity and more importantly $L^\infty$-estimates on the density of the interpolations will play an important role and one can indeed prove $L^\infty$-estimates on the interpolations in the approximating variational problems (see Proposition~\ref{e:Linftydensityestimates}). These estimates rely on the so-called Measure Contraction  Property of $\hn$. In the limit one will  eventually get suitable $L^\infty$-estimates on the interpolations constructed from optimal transport plans selected through the variational approximation procedure. Next we note that some properties of plans with absolutely continuous first marginal proved in~\cite{champion-dePascale} can be easily generalized to our setting (see Section~\ref{lebpoints}). These properties are independent of the transport problem. They rely on the notion of Lebesgue points of functions and Lebesgue points of sets, notions which make sense for instance in any doubling metric measure space. Together with the above mentioned $L^\infty$-estimates on the interpolations, one can in particular prove density estimate on the transport set of selected optimal transport plans in the same way as in~\cite{champion-dePascale} (see Section~\ref{main}). All together, namely combining this later density estimate on the transport set (Lemma~\ref{mainlemma}) with Lemma~\ref{dens2} and remembering the monotonicity along transport rays (Lemma~\ref{pi2}) it turns out that the selected transport plans are necessarily induced by a transport map as eventually proved in Theorem~\ref{mainthmbis}.

The paper is organized as follows. In Section~\ref{opttrans} we recall classical facts about optimal transportation for later use. In Section~\ref{prelimhn} we describe the Heisenberg group focusing on the features that will be needed in this paper. In Section~\ref{sect:optplanning} we prove geometric properties of optimal transport plans and the monotonicity along transport rays of solutions to  the secondary variational problem. The variational approximations are introduced and studied in Section~\ref{varapprox}. In Section~\ref{lebpoints} we state in our framework properties of plans with absolutely continuous first marginal proved in~\cite{champion-dePascale} in $\Rn$. This section, independent of the transport problem, contains density results that play an essential role in the strategy followed here. In Section~\ref{main} we prove lower bounds on the density, in some suitable sense, of the transport set of optimal transport plans selected through the variational approximations. We conclude in Section~\ref{conclusion} proving that the selected transport plans are induced by a transport map. We discuss in the final Section~\ref{extensions} some possible extensions of this approach to other spaces.

\section{Preliminaries on optimal transportation} \label{opttrans}

We recall some well-known facts about optimal transportation confining ourselves to statements that will fit our needs in the rest of the paper. More general versions of these results hold in more general contexts. We refer to e.g. \cite{villani} and the references therein.

Let $(X,d)$ be a Polish space, i.e., a complete and separable metric space. We denote by $\p(X)$ the set of all Borel probability measures on $X$ and by $\p_c(X)$ the set of Borel probability measures on $X$ with compact support. The weak topology we consider on $\p(X)$ is the topology induced by convergence against bounded and continuous test functions (or narrow topology).

\subsection{Kantorovich transport problem} 
Let $\mu$, $\nu\in\p(X)$. We denote by 
\begin{equation*}
 \Pi(\mu,\nu) := \{ \g \in \p(X \times X);\, (\pi_1)_\sharp\g = \mu,\, (\pi_2)_\sharp\g = \nu \}
\end{equation*}
the set of all transport plans between $\mu$ and $\nu$. Here $\pi_1$, $\pi_2: X\times X \rightarrow X$ denote the canonical projections on the first and second factor respectively.

Given $c:X\times X \rightarrow [0,+\infty]$ a lower semicontinuous cost function, we look at Kantorovich transport problem between $\mu$ and $\nu$ with cost $c$:
\begin{equation} \label{e:MK} 
 \min_{\g \in \Pi(\mu,\nu)} \int_{X\times X} c(x,y)\,d\g(x,y).
\end{equation}
As a classical fact, existence of solutions to \eqref{e:MK} follows from the weak compactness of $\Pi(\mu,\nu)$ together with the lower semicontinuity of the functional to be minimized. We call them optimal transport plans.

\subsection*{Cyclical monotonicity} We say that a set $\G\subset X\times X$ is $c$-cyclically monotone if 
\begin{equation*}
\sum_{i=1}^N c(x_i,y_i) \leq \sum_{i=1}^N c(x_{i+1},y_i)
\end{equation*}
whenever $N\geq 2$ and $(x_1,y_1), \dots, (x_N,y_N)\in\G$.

\begin{thm} \label{ccycl}
Let $\g \in \Pi(\mu,\nu) $ be an optimal transport plan and assume that $\int_{X\times Y} c(x,y)\,d\g <+\infty$. Then $\g$ is concentrated on a $c$-cyclically monotone Borel set.
\end{thm}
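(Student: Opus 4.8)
The plan is to prove the theorem by a combination of duality and measure-theoretic selection, following the now-classical argument for $c$-cyclical monotonicity of optimal plans. First I would reduce to the case in which the optimal value is finite, which is exactly the standing hypothesis $\int c\,d\g < +\infty$; without this the statement is vacuous in the sense that $c$-cyclical monotonicity becomes trivially available but uninformative. The heart of the matter is to produce a \emph{single} Borel set $\G$ carrying full $\g$-mass on which the finite cyclical inequalities hold for all $N \geq 2$ simultaneously.

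The key idea is a contradiction/approximation argument. For each fixed $N \geq 2$, consider the function
\begin{equation*}
\Phi_N(x_1,y_1,\dots,x_N,y_N) := \sum_{i=1}^N c(x_{i+1},y_i) - \sum_{i=1}^N c(x_i,y_i)
\end{equation*}
on $(X\times X)^N$, where indices are taken cyclically. Suppose, for contradiction, that $\g^{\otimes N}$ assigns positive measure to the set $\{\Phi_N < 0\}$. Since $c$ is lower semicontinuous and hence Borel, and since $c$ is integrable against $\g$ (so the negative part of $\Phi_N$ is $\g^{\otimes N}$-integrable), one can find a compact subset $K \subset \{\Phi_N < -\delta\}$ of positive $\g^{\otimes N}$-measure for some $\delta > 0$. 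Projecting $K$ onto its $2i$-th and $(2i-1)$-th coordinates and building a perturbation of $\g$ that reroutes a small amount of mass along the cyclic permutation $y_i \mapsto x_{i+1}$ produces a competitor $\tilde\g \in \Pi(\mu,\nu)$ with strictly smaller cost, contradicting optimality. The construction of $\tilde\g$ is the routine but slightly delicate point: one disintegrates $\g^{\otimes N}$ restricted to $K$, extracts the $N$ marginal pieces, and glues them back using a common auxiliary probability space so that the first and second marginals are preserved; this is where one must be careful that the perturbation is genuinely a signed measure of the form $\varepsilon(\sigma - \tau)$ with $\sigma,\tau$ having the same marginals on each factor.

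Having shown $\g^{\otimes N}(\{\Phi_N < 0\}) = 0$ for every fixed $N$, by Fubini there is a $\g$-full Borel set $\G_N \subset X\times X$ such that any $N$-tuple of points from $\G_N$ satisfies the $N$-cycle inequality. Then $\G := \bigcap_{N \geq 2} \G_N$ is Borel, has full $\g$-measure (countable intersection of full-measure sets), and every finite tuple from $\G$ lies in some $\G_N$, hence satisfies the defining inequality; therefore $\G$ is $c$-cyclically monotone and $\g$ is concentrated on it. The main obstacle is the perturbation step: making sure the rerouted plan stays in $\Pi(\mu,\nu)$ requires a careful disintegration and re-gluing, and checking the strict cost decrease uses the uniform gap $\delta$ and the positive mass of $K$ together with integrability of $c$ to control the finitely many terms being modified. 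An alternative, which I would mention as a remark, is to invoke Kantorovich duality directly: finiteness of the cost gives a maximizing pair of potentials (Kantorovich potentials) $\varphi \oplus \psi \leq c$, and the set $\{(x,y) : \varphi(x) + \psi(y) = c(x,y)\}$ is automatically $c$-cyclically monotone and carries $\g$ — but setting up the duality rigorously in this generality is itself nontrivial, so the direct perturbation argument is the cleaner route here.
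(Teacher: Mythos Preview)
The paper does not supply a proof of this theorem; it is stated in Section~\ref{opttrans} as a well-known preliminary, with a global reference to Villani's monograph. There is therefore no argument in the paper to compare your proposal against, and I can only assess the proposal on its own terms.

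Your perturbation argument showing $\g^{\otimes N}(\{\Phi_N<0\})=0$ is the standard one and is essentially correct. The genuine gap is the next sentence: the assertion that ``by Fubini there is a $\g$-full Borel set $\G_N\subset X\times X$ such that any $N$-tuple of points from $\G_N$ satisfies the $N$-cycle inequality'' does not follow from Fubini. Fubini only tells you that for $\g$-a.e.\ fixed first coordinate the remaining slice is $\g^{\otimes(N-1)}$-null; it does \emph{not} produce a full-measure set $\G_N$ with $\G_N^{\,N}\cap\{\Phi_N<0\}=\emptyset$. The abstract implication ``$\g^{\otimes N}(B)=0\Rightarrow\exists\,\G,\ \g(\G)=1,\ \G^N\cap B=\emptyset$'' is simply false (take $\g$ Lebesgue on $[0,1]$, $N=2$, and $B$ the diagonal). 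When $c$ is continuous the set $\{\Phi_N\geq 0\}$ is closed, and one takes $\G=\spt\g$, using $(\spt\g)^N=\spt(\g^{\otimes N})\subset\{\Phi_N\geq 0\}$; this is how the perturbation argument is normally concluded. For merely lower semicontinuous $c$ --- which is the standing hypothesis here --- that closure argument is unavailable and one must do more: either approximate $c$ from below by continuous costs and pass to the limit, or invoke duality and take $\G=\{(x,y):\psi(x)+\psi^c(y)=c(x,y)\}$, which is Borel and automatically $c$-cyclically monotone. You mention the duality route as a side remark, but for lower semicontinuous costs it is in fact the usual way the proof is completed, not an afterthought.
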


\subsection*{Dual formulation - Kantorovich potentials}
Let $\psi:X\rightarrow \R \cup \{-\infty\}$. We say that $\psi$ is $c$-concave if $\psi\not\equiv -\infty$ and if there exists $\varphi:X \rightarrow \R \cup \{-\infty\}$, $\varphi\not\equiv -\infty$, such that 
\begin{equation*}
\psi(x) = \inf_{y\in X} c(x,y) - \varphi(y).
\end{equation*}

\begin{thm}  \label{duality}
In addition to the previous assumptions, assume that $c$ is real-valued and that 
\begin{equation*}
 \forall\, (x,y)\in X\times X, \qquad c(x,y) \leq a(x) + b(y)
\end{equation*}
for some $a\in L^1(\mu)$ and $b\in L^1(\nu)$. Then one has
\begin{equation} \label{e:duality}
 \min_{\g\in \Pi(\mu,\nu)} \int_{X\times X} c(x,y)\,d\g(x,y) 
= \max \int_{X} \psi(x)\,d\mu(x) + \int_{X} \psi^c(y)\,d\nu(y)
\end{equation}
where the above maximum is taken among all $c$-concave functions $\psi$ and $ \psi^c(y) := \inf_{x\in X} c(x,y) - \psi(x)$. 
\end{thm}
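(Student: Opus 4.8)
My plan is to prove the two inequalities in~\eqref{e:duality} separately: the ``$\geq$'' direction by a soft pointwise argument, and the ``$\leq$'' direction, together with the attainment of the maximum, from Theorem~\ref{ccycl} via the classical Rockafellar construction of a potential out of a $c$-cyclically monotone set.

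\textbf{Step 1: the easy inequality.} First I would note that for any $c$-concave $\psi$, writing $\psi(x) = \inf_y c(x,y) - \varphi(y)$ with $\varphi \not\equiv -\infty$, both $\psi$ and $\psi^c$ are bounded above by $a(\cdot)$, respectively $b(\cdot)$, up to an additive constant (just evaluate the defining infima at one point where $\varphi$, resp.\ $\psi$, is finite), so that $\int_X \psi\,d\mu$ and $\int_X \psi^c\,d\nu$ are well defined in $[-\infty,+\infty)$. Since $\psi(x) + \psi^c(y) \leq c(x,y)$ for all $(x,y)$ by the very definition of $\psi^c$, and since $c \leq a+b$ forces $\int_{X\times X} c\,d\g < +\infty$ for every $\g \in \Pi(\mu,\nu)$, integrating this inequality against an arbitrary $\g$ and then optimizing over $\g$ and $\psi$ gives the ``$\geq$'' inequality in~\eqref{e:duality}.

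\textbf{Step 2: the optimal potential.} For the reverse inequality I would start from an optimal plan $\g \in \Pi(\mu,\nu)$, which exists by weak compactness of $\Pi(\mu,\nu)$ and lower semicontinuity of $\g \mapsto \int c\,d\g$ and which satisfies $\int c\,d\g < +\infty$. By Theorem~\ref{ccycl}, $\g$ is concentrated on a $c$-cyclically monotone Borel set $\G$. Fixing a base point $(x_0,y_0) \in \G$, I would set
\begin{equation*}
\psi(x) := \inf \Bigl\{ c(x,y_n) - c(x_n,y_n) + \sum_{i=1}^{n}\bigl(c(x_i,y_{i-1}) - c(x_{i-1},y_{i-1})\bigr) \Bigr\},
\end{equation*}
the infimum running over all $n \geq 1$ and all $(x_1,y_1),\dots,(x_n,y_n) \in \G$. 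Evaluating at $n=1$, $(x_1,y_1)=(x_0,y_0)$ shows $\psi \leq c(\cdot,y_0) - c(x_0,y_0) < +\infty$, while $c$-cyclical monotonicity of $\G$ shows that every competitor for $\psi(x_0)$ is nonnegative, whence $\psi(x_0)=0$ and $\psi \not\equiv -\infty$; rewriting $\psi$ as an infimum over $y$ of $c(x,y)$ minus a function of $y$ then exhibits $\psi$ as $c$-concave. A routine but slightly delicate argument (passing to a $\sigma$-compact subset of $\G$ of full $\g$-measure and using lower semicontinuity of $c$ and separability to restrict the chains to points in a fixed countable set) yields that $\psi$ may be chosen Borel measurable. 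The key algebraic point, obtained by appending the point $(x,y) \in \G$ to near-optimal chains for $\psi(x')$, is that $\psi$ is real-valued on $\pi_1(\G)$ and that $\psi(x) + \psi^c(y) = c(x,y)$ for every $(x,y) \in \G$.

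\textbf{Step 3: integrability and conclusion.} Finally I would check that $\psi \in L^1(\mu)$ and $\psi^c \in L^1(\nu)$: the bounds from Step~1 give $\psi^+ \in L^1(\mu)$ and $(\psi^c)^+ \in L^1(\nu)$, while on $\G$ one has $\psi(x) = c(x,y) - \psi^c(y) \geq -\psi^c(y) \geq -(a(x_0)+b(y))$ and $\psi^c(y) = c(x,y) - \psi(x) \geq -\psi(x)$, and integrating these against $\g$ (which is concentrated on $\G$ and has marginals $\mu$ and $\nu$) controls $\int_X\psi\,d\mu$ and $\int_X\psi^c\,d\nu$ from below as well. Then
\begin{equation*}
\int_X \psi\,d\mu + \int_X \psi^c\,d\nu = \int_{\G}\bigl(\psi(x)+\psi^c(y)\bigr)\,d\g(x,y) = \int_{X\times X} c\,d\g,
\end{equation*}
which equals the minimum in~\eqref{e:duality}; combined with Step~1 this forces equality and shows the maximum is attained at $\psi$. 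The main obstacle is Step~2: arranging that the Rockafellar potential be simultaneously $c$-concave (which crucially uses $c$-cyclical monotonicity of $\G$ to guarantee it is not identically $-\infty$), Borel measurable, and exactly $c$-reconstructing on $\G$; the integrability bookkeeping of Step~3 under the merely one-sided bound $c \leq a+b$ is the other point that requires care.
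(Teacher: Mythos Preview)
The paper does not prove Theorem~\ref{duality}: it is stated in Section~\ref{opttrans} as a classical preliminary fact with a reference to~\cite{villani}, so there is no proof in the paper to compare against. Your proposal is correct and is precisely the standard argument one finds in that reference: the easy inequality by integrating $\psi+\psi^c\leq c$, and the hard inequality by building a Rockafellar-type potential from the $c$-cyclically monotone set furnished by Theorem~\ref{ccycl}, then checking measurability and integrability. The only places where more care would be needed in a fully written-out proof are exactly the two you flag yourself (Borel measurability of the chain infimum, and the $L^1$ bookkeeping under the one-sided bound $c\leq a+b$), but your outline handles both correctly.
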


\begin{defi} [Kantorovich potentials]
 We say that $\psi:X\rightarrow \R \cup \{-\infty\}$ is a Kantorovich potential if $\psi$ is a $c$-concave maximizer for the right-hand side of \eqref{e:duality}.
\end{defi}

\begin{thm} \label{caracterisation_opt_planning}
With the same assumptions as in Theorem~\ref{duality}, let $\psi$ be a Kantorovich potential. Then $\g\in \Pi(\mu,\nu)$ is an optimal transport plan if and only if 
\begin{equation*}
 c(x,y) = \psi(x) + \psi^c(y) \qquad \g-\text{a.e. in } X\times X.
\end{equation*}
\end{thm}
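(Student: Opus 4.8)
The plan is to read the characterization directly off the duality identity~\eqref{e:duality}: it is the measure-theoretic observation that equality in an integrated pointwise inequality forces pointwise equality almost everywhere. Throughout, $\psi$ denotes the fixed Kantorovich potential and $\psi^c(y)=\inf_{x\in X}c(x,y)-\psi(x)$.

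First I would record the elementary pointwise inequality and the integrability it interacts with. For every $(x,y)$ at which $\psi(x)$ and $\psi^c(y)$ are both finite, the very definition of $\psi^c$ gives $\psi(x)+\psi^c(y)\le c(x,y)$. To integrate this against a plan one needs $\psi$ and $\psi^c$ to be genuinely integrable, which is where the hypothesis $c(x,y)\le a(x)+b(y)$ with $a\in L^1(\mu)$, $b\in L^1(\nu)$ is used: since $\psi$ is $c$-concave, $\psi=\inf_y c(\cdot,y)-\varphi(y)$ for some $\varphi\not\equiv-\infty$, so choosing $y_0$ with $\varphi(y_0)>-\infty$ yields $\psi(x)\le c(x,y_0)-\varphi(y_0)\le a(x)+b(y_0)-\varphi(y_0)$, i.e. $\psi$ is bounded above by an $L^1(\mu)$ function; symmetrically $\psi^c$ is bounded above by an $L^1(\nu)$ function. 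Hence $\int\psi\,d\mu$ and $\int\psi^c\,d\nu$ are well defined in $[-\infty,+\infty)$. Moreover $\Pi(\mu,\nu)\neq\emptyset$ and $0\le c(x,y)\le a(x)+b(y)$, so the minimum in~\eqref{e:duality} is a finite number; since $\psi$ is a maximizer, $\int\psi\,d\mu+\int\psi^c\,d\nu$ equals that finite number, and combined with the one-sided bounds this forces $\psi\in L^1(\mu)$ and $\psi^c\in L^1(\nu)$ separately. In particular $\{\psi=-\infty\}$ is $\mu$-null and $\{\psi^c=-\infty\}$ is $\nu$-null.

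With this in hand, fix any $\g\in\Pi(\mu,\nu)$. Because the marginals of $\g$ are $\mu$ and $\nu$, the set where $\psi(x)=-\infty$ or $\psi^c(y)=-\infty$ is $\g$-null, so $(x,y)\mapsto c(x,y)-\psi(x)-\psi^c(y)$ is $\g$-a.e. defined and nonnegative, and
\begin{equation*}
\int_{X\times X}\big(c(x,y)-\psi(x)-\psi^c(y)\big)\,d\g
=\int_{X\times X}c\,d\g-\Big(\int_X\psi\,d\mu+\int_X\psi^c\,d\nu\Big).
\end{equation*}
By~\eqref{e:duality} the parenthesized term equals $\min_{\g'\in\Pi(\mu,\nu)}\int_{X\times X}c\,d\g'$, so the left-hand side is a nonnegative number measuring the excess of the cost of $\g$ over the Kantorovich minimum. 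Thus $\g$ is an optimal transport plan if and only if this nonnegative integral vanishes, and since the integrand is $\ge 0$ $\g$-a.e., the integral vanishes if and only if $c(x,y)=\psi(x)+\psi^c(y)$ for $\g$-a.e. $(x,y)$. This is exactly the asserted equivalence.

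I do not expect a genuine obstacle here once Theorem~\ref{duality} is invoked; the only delicate points are purely bookkeeping — handling the value $-\infty$ by showing the relevant sets are null, and upgrading ``the sum of the two integrals is a well-defined finite number'' to ``each of $\psi,\psi^c$ lies in the corresponding $L^1$'', which is precisely the role of the domination $c(x,y)\le a(x)+b(y)$.
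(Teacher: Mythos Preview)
Your argument is correct and is the standard proof of this classical characterization. Note, however, that the paper does not actually prove this statement: Theorem~\ref{caracterisation_opt_planning} is listed in Section~\ref{opttrans} among well-known preliminaries on optimal transportation, with a global reference to~\cite{villani}, and no proof is given. So there is nothing in the paper to compare your approach against; your write-up simply supplies the omitted standard argument, and the bookkeeping you flag (integrability of $\psi$ and $\psi^c$, handling the $-\infty$ sets) is exactly what one needs to make the duality-based one-liner rigorous.
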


We will use these results for various cost functions. In the particular case which is the core of this paper and where $c(x,y) = d(x,y)$ and $\mu$, $\nu \in \p_c(X)$, one can rephrase these results in terms of 1-Lipschitz Kantorovich potentials. More precisely, set 
\begin{equation*}
 \text{Lip}_1(d) := \{u:X\rightarrow \R;\quad |u(x) - u(y)| \leq d(x,y) \quad \forall\, x, y \in X\}.
\end{equation*}

\begin{thm} \label{1lip_potential}
Let $\mu$, $\nu \in \p_c(X)$. Then one can find a Kantorovich potential $u\in \text{Lip}_1(d) $ so that
\begin{equation*}
 \min_{\g\in \Pi(\mu,\nu)} \int_{X\times X} d(x,y)\,d\g(x,y) 
= \int_{X} u(x)\,d\mu(x) - \int_{X} u(y)\,d\nu(y)
\end{equation*}
and 
$\g\in \Pi(\mu,\nu)$ is an optimal transport plan solution to Kantorovich transport problem~\eqref{e:MK} between $\mu$ and $\nu$ with cost $c(x,y) = d(x,y)$ if and only if 
\begin{equation*}
u(x) - u(y) = d(x,y)  \qquad \g-\text{a.e. in } X\times X.
\end{equation*}
\end{thm}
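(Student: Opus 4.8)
The plan is to derive the statement as a specialization of the general duality theorem (Theorem~\ref{duality}) and the characterization of optimal plans (Theorem~\ref{caracterisation_opt_planning}) to the cost $c(x,y)=d(x,y)$, exploiting the fact that here the cost is itself a distance.

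First I would check that the abstract hypotheses are satisfied in this case. The cost $c=d$ is real-valued and continuous, hence lower semicontinuous. Since $\mu,\nu\in\p_c(X)$, every $\g\in\Pi(\mu,\nu)$ is concentrated on the compact set $\spt\mu\times\spt\nu$, on which $d$ is bounded by some finite $R$; thus $\int d\,d\g\le R<\infty$ for every plan, and fixing $x_0\in\spt\mu$ one has $d(x,y)\le d(x,x_0)+d(x_0,y)=:a(x)+b(y)$ with $a$ bounded on $\spt\mu$ and $b$ bounded on $\spt\nu$, hence $a\in L^1(\mu)$ and $b\in L^1(\nu)$. So the assumptions underlying Theorems~\ref{ccycl}, \ref{duality} and \ref{caracterisation_opt_planning} all hold, and Theorem~\ref{duality} furnishes a $c$-concave Kantorovich potential $\psi$.

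Next I would show that $\psi$ is automatically $1$-Lipschitz, so that one may simply take $u:=\psi$. Writing $\psi(x)=\inf_{y\in X}\bigl(d(x,y)-\varphi(y)\bigr)$, only those $y$ with $\varphi(y)\in\R$ contribute to the infimum, and for each such $y$ the function $x\mapsto d(x,y)-\varphi(y)$ lies in $\text{Lip}_1(d)$; an infimum of a nonempty family of $1$-Lipschitz functions that is not identically $-\infty$ (which holds since $\psi\not\equiv-\infty$ by $c$-concavity) is real-valued and $1$-Lipschitz. Hence $u=\psi\in\text{Lip}_1(d)$. Using the bound $u(x)-u(y)\le d(x,y)$ one gets $d(x,y)-u(x)\ge -u(y)$ for all $x$, with equality at $x=y$, so $\psi^c(y)=\inf_x\bigl(d(x,y)-u(x)\bigr)=-u(y)$. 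Substituting this into \eqref{e:duality} turns its right-hand side into $\int_X u\,d\mu-\int_X u\,d\nu$, which is the claimed duality identity, and $u$, being the maximizer $\psi$ itself, is a Kantorovich potential lying in $\text{Lip}_1(d)$.

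The characterization of optimal plans is then immediate from Theorem~\ref{caracterisation_opt_planning}: $\g\in\Pi(\mu,\nu)$ is an optimal transport plan for the cost $d$ if and only if $d(x,y)=\psi(x)+\psi^c(y)$ $\g$-a.e., i.e.\ $u(x)-u(y)=d(x,y)$ $\g$-a.e., again because $\psi=u$ and $\psi^c=-u$. I do not anticipate a genuine obstacle here: the content is a routine reduction of the abstract transport machinery to a distance cost, the only points requiring (easy) verification being the passage to bounded $a,b$ via compactness of the supports and the two elementary observations that a $d$-concave function is $1$-Lipschitz and that its $c$-transform equals its negative.
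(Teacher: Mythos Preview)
Your argument is correct. The paper, however, does not provide its own proof of this theorem: it is stated in Section~\ref{opttrans} among the ``well-known facts about optimal transportation'' recalled from the literature (with a general reference to \cite{villani}), so there is no proof in the paper to compare against. Your derivation---checking the hypotheses of Theorems~\ref{duality} and~\ref{caracterisation_opt_planning}, observing that any $d$-concave function is automatically $1$-Lipschitz, and computing $\psi^c=-\psi$ from the $1$-Lipschitz bound---is exactly the standard way one obtains this specialization, and all steps are sound.
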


\subsection{Transport problem} Let $\mu$, $\nu\in\p(X)$. We say that a $\mu$-measurable map $T:X\rightarrow X$ is a transport map between $\mu$ and $\nu$ if $T_\sharp\mu=\nu$, i.e., $\nu(B)=\mu(T^{-1}(B))$ for
all Borel set $B$. 

Given $c:X\times X \rightarrow [0,+\infty[$ a continuous cost function, we look at the transport problem between $\mu$ and $\nu$ with cost $c$:
\begin{equation}  \label{e:M} 
 \min_{T_\#\mu=\nu} \int_{X} c(x,T(x))\,d\mu(x).
\end{equation}

We say that a transport plan $\g\in \Pi(\mu,\nu)$ is induced by a transport if there exists a $\mu$-measurable map $T:X\rightarrow X$ such that $(I \otimes T)_\sharp\mu = \g$ where $(I \otimes T)(x) := (x,T(x))$. Such a map is automatically a transport map between $\mu$ and $\nu$. We also recall that if a transport plan $\gamma$ is concentrated on a $\gamma$-measurable graph then $\gamma$ is induced by a transport.

\begin{thm} [Optimal transport plans versus optimal transport maps] \hfill \label{plan-transport}

(i) Assume that $\g$ is an optimal transport plan solution to Kantorovich transport problem~\eqref{e:MK} and that $\g$ is induced by transport $T$. Then $T$ is an optimal transport map solution to the transport problem~\eqref{e:M}.

(ii) Assume that any optimal transport plan solution to Kantorovich transport problem~\eqref{e:MK} is induced by transport. Then there exists a unique optimal transport map solution to the transport problem~\eqref{e:M}.
 \end{thm}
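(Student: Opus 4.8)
The plan is to exploit the elementary fact that Monge's problem \eqref{e:M} is nothing but the Kantorovich problem \eqref{e:MK} restricted to those plans that are induced by a transport. First I would record the basic correspondence: if $T$ is any transport map between $\mu$ and $\nu$, then $\g_T := (I\otimes T)_\sharp\mu$ belongs to $\Pi(\mu,\nu)$, and the change-of-variables formula for push-forwards gives $\int_{X\times X} c\,d\g_T = \int_X c(x,T(x))\,d\mu(x)$; conversely, a plan induced by a transport $T$ is precisely $\g_T$. Letting $T$ run over all transport maps, this yields
\[
\min_{\g\in\Pi(\mu,\nu)} \int_{X\times X} c\,d\g \ \le\ \inf_{S_\sharp\mu=\nu} \int_X c(x,S(x))\,d\mu(x).
\]

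For part (i), assume $\g$ is optimal for \eqref{e:MK} and $\g=\g_T$. Since $T$ is admissible for \eqref{e:M},
\[
\inf_{S_\sharp\mu=\nu} \int_X c(x,S(x))\,d\mu \ \le\ \int_X c(x,T(x))\,d\mu \ =\ \int_{X\times X} c\,d\g \ =\ \min_{\g'\in\Pi(\mu,\nu)} \int_{X\times X} c\,d\g',
\]
and comparing with the reverse inequality from the first step forces equality throughout; in particular $T$ attains the infimum in \eqref{e:M}.

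For part (ii), existence is immediate: by the weak compactness of $\Pi(\mu,\nu)$ and the lower semicontinuity of $\g\mapsto\int c\,d\g$ recalled in Section~\ref{opttrans}, problem \eqref{e:MK} admits an optimal plan $\g$; by hypothesis $\g$ is induced by a transport $T$, and part (i) shows $T$ solves \eqref{e:M}. For uniqueness, suppose $T_0$ and $T_1$ both solve \eqref{e:M}. By the first step the plans $\g_{T_0}$ and $\g_{T_1}$ have cost equal to the common value of \eqref{e:M} and \eqref{e:MK}, hence are optimal for \eqref{e:MK}, and therefore so is the midpoint $\g := \tfrac12(\g_{T_0}+\g_{T_1}) \in \Pi(\mu,\nu)$. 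By hypothesis $\g$ is induced by a transport, i.e.\ concentrated on a $\g$-measurable graph $\{y=T(x)\}$. Since $\g_{T_i}\le 2\g$, every $\g$-null set is $\g_{T_i}$-null, so each $\g_{T_i}$ is concentrated on that same graph; as $\g_{T_i}$ is also concentrated on the graph of $T_i$, we get $T_i = T$ $\mu$-a.e., whence $T_0 = T_1$ $\mu$-a.e.

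I do not expect a genuine obstacle here: the argument is purely formal once the dictionary $T\leftrightarrow\g_T$ and the push-forward change-of-variables identity are in place. The only points requiring a little care are measure-theoretic bookkeeping — justifying the change of variables for the merely $\mu$-measurable map $T$, and the domination step $\g_{T_i}\le 2\g$ used to transfer the property of being concentrated on a graph from $\g$ to each $\g_{T_i}$ — together with the fact, already recalled in the excerpt, that a transport plan concentrated on a measurable graph is induced by a transport.
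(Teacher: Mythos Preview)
The paper does not give its own proof of this theorem: it is stated in Section~\ref{opttrans} among the ``well-known facts about optimal transportation'' recalled from the literature (with a blanket reference to \cite{villani}), so there is nothing to compare your argument against on the paper's side.

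Your proof is correct and is the standard one. Part~(i) is exactly the observation that Monge's problem is the Kantorovich problem restricted to plans of the form $(I\otimes T)_\sharp\mu$, together with the change-of-variables identity $\int c\,d\g_T=\int c(x,T(x))\,d\mu$. For part~(ii), your convexity/averaging argument for uniqueness --- take $\g=\tfrac12(\g_{T_0}+\g_{T_1})$, use the hypothesis to put $\g$ on a graph, and use $\g_{T_i}\le 2\g$ to force $T_i=T$ $\mu$-a.e.\ --- is precisely the classical one. The measure-theoretic caveats you flag (change of variables for a $\mu$-measurable $T$, and the domination step) are the right ones and are routine.
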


\section{Preliminaries on $\hn$} \label{prelimhn}

We consider the Heisenberg group $\hn$ equipped with its Carnot-Carath\'eodory distance. Endowed with this distance $\hn$ is a polish geodesic and non-branching metric space and a doubling metric measure space when equipped with its Haar measure.

\subsection{The Heisenberg group}
The Heisenberg group $\hn$ is a connected, simply connected Lie group with stratified Lie algebra. We identify it with $\Cn \times \R$ equipped with the group law 
\begin{equation*}
[\zeta,t] \cdot [\zeta',t'] := [\zeta + \zeta', t+t'+ 2 \sum_{j=1}^n \im \zeta_j \overline\zeta'_j]
\end{equation*}
where $\zeta = (\zeta_1,\dots,\zeta_n)$, $\zeta' = (\zeta'_1,\dots,\zeta'_n)\in\Cn$ and $t$, $t'\in \R$. The unit element is 0 and the center of the group is 
\begin{equation*}
L := \{[0,t] \in \hn ;\; t \in \R\}.
\end{equation*}
There is a natural family of dilations $\delta_r$ on $\hn$ defined by $\delta_r([\zeta,t]):= [r\zeta,r^2 t]$. These dilations are group homomorphisms. 

We may also identify $\hn$ with $\R^{2n+1}$ via the correspondence $[\zeta,t]= (\xi,\eta,t)$ where $\xi=(\xi_1,\dots,\xi_n)$, 
$\eta=(\eta_1,\dots,\eta_n) \in \R^n$, $t\in \R$ and $\zeta = (\zeta_1,\dots,\zeta_n)\in\Cn$ with $\zeta_j = \xi_j + i \eta_j$. The horizontal subbundle of the tangent bundle is defined by
\begin{equation*}
\mathcal{H} := \spa \left\{X_j; \, j=1,\dots,n\right\} \oplus \spa \left\{Y_j; \, j=1,\dots,n\right\}
\end{equation*}
where the left invariant vector fields $X_j$ and $Y_j$ are given by
\begin{equation*}
X_j := \partial_{\xi_j} + 2 \eta_j \partial_t \,, \quad Y_j := \partial_{\eta_j} - 2 \xi_j \partial_t.
\end{equation*}
Vector fields in $\mathcal{H}$ will be called horizontal vector fields. Setting $T:=\partial_t$, the only non trivial bracket relations are $[X_j,Y_j] = -4 T$ hence $\spa \{T\} = [ \mathcal{H},\mathcal{H}]$ and the Lie algebra $\mathcal{H}^n$ of $\hn$ admits the stratification $\mathcal{H}^n = \mathcal{H} \oplus \spa \{T\}$.

The Lebesgue measure $\Leb$ on $\hn \thickapprox \mathbb{R}^{2n+1}$ is a Haar measure of the group. It is $(2n+2)$-homogeneous with respect to the dilations, 
\begin{equation*}
 \Leb(\delta_r(A)) = r^{2n+2} \Leb(A)
\end{equation*}
for all Borel set $A$ and all $r>0$.

\subsection{Carnot-Carath\'eodory distance} The Carnot-Carath\'eodory distance on $\hn$ is defined by 
\begin{equation} \label{e:cc-dist}
 d(x,y) = \inf \{ length_{g_0} (\gamma); \; \gamma \text{ horizontal } C^1 \text{-smooth curve joining } x \text{ to } y\},
\end{equation}
where a $C^1$-smooth curve is said to be horizontal if, at every point, its tangent vector belongs to the horizontal subbundle of the tangent bundle and $g_0$ is the left invariant Riemannian metric which makes $(X_1,\dots,X_n,Y_1,\dots,Y_n,T)$ an orthonormal basis. For a general presentation of Carnot-Carath\'eodory spaces, see e.g. \cite{bel}, \cite{mont}. 

The topology induced by this distance is the original (Euclidean) topology on $\hn \thickapprox (\mathbb{R}^{2n+1},g_0)$ and $(\hn,d)$ is a  complete metric space. The distance is left invariant and 1-homogeneous with respect to the dilations,
\begin{equation*}
d(x \cdot y, x\cdot z) = d(y,z) \quad \text{and} \quad d(\delta_r(y), \delta_r(z)) = r\,d(y,z)
\end{equation*}
for all $x$, $y$, $z\in\hn$ and all $r>0$. It follows in particular that $B(x,r) = x\cdot \delta_r (B(0,1))$ and hence 
\begin{equation} \label{e:measball}
 \Leb(B(x,r)) = c_n \, r^{2n+2}
\end{equation}
for all $x\in \hn$, all $r>0$ and where $c_n := \Leb(B(0,1))>0$. The measure $\Leb$ is in particular a doubling measure on $(\hn,d)$. For more details about doubling metric measure spaces, see e.g. \cite{heinonen}.

Endowed with its Carnot-Carath\'eodory distance $\hn$ is a geodesic space, i.e., for all $x$, $y\in\hn$, there exists a curve $\s \in C([a,b],\hn)$ such that $\s(a)=x$, $\s(b)=y$ and $d(x,y) = l(\s)$ where 
\begin{equation*}
 l(\s) = \sup_{N\in  \N^*} \sup_{a=t_0\leq \dots \leq t_N = b} \sum_{i=0}^{N-1} d(\s(t_i),\s(t_{i+1})).
\end{equation*}
Up to a reparameterization one can always assume that length minimizing curves $ \s$ are parameterized proportionally to arc-length, i.e., 
\begin{equation*}
d(\s(s),\s(s')) = v \,(s'-s)
\end{equation*}
for all $s<s' \in [a,b]$, where $v:=d(\s(a),\s(b))/(b-a)$ is the (constant) speed of the curve. As a convention we will use throughout this paper the terminology \textit{minimal curves} to denote length minimizing curves parameterized proportionally to arc-length.

\begin{defi}[Minimal curves] 
We say that a continuous curve $\s:[a,b]\rightarrow\hn$ is a minimal curve if $l(\s) = d(\s(a),\s(b))$ and $\s$ is parameterized proportionally to arc-length.
 \end{defi}

In general Carnot-Carath\'eodory spaces, issues about uniqueness and regularity of minimal curves between any two points as well as issues about the regularity of the distance function to a given point could be delicate. In the specific case of the Heisenberg group, equations of all minimal curves can be explicitly computed and exploited to overcome these difficulties. We recall below the description of minimal curves in $\hn$, see e.g. \cite{gaveau}, \cite{ar}. We set
\begin{equation} \label{e:omega}
 \Om := \{(x,y)\in \hn\times \hn;\,\, x^{-1}\cdot y \not \in L\}.
\end{equation}

\begin{thm} [Minimal curves in $\hn$] \label{geod} 
Minimal curves in $\hn$ are horizontal $C^1$-smooth curves such that the infimum in \eqref{e:cc-dist} is achieved. One has the more precise description:

\renewcommand{\theenumi}{\roman{enumi}}
\begin{enumerate}

\item Non trivial minimal curves starting from 0 and parameterized on $[0,1]$ are all curves $\s_{\chi,\varphi}$ for some $\chi \in \Cn\setminus\{0\}$ and $\varphi\in [-2\pi,2\pi]$ where 
\begin{equation*}
\s_{\chi,\varphi}(s) = [i \dfrac{(e^{-i\varphi s} -1) \chi}{\varphi}, 2 |\chi|^2 \,\dfrac{\varphi s - \sin (\varphi s)}{\varphi^2}]
 \end{equation*}
if $\varphi\in [-2\pi,2\pi]\backslash \{0\}$ and 
\begin{equation*}
 \s_{\chi,\varphi}(s) = [\chi s, 0]
\end{equation*}
if $\varphi =0$. Moreover one has $|\chi| = d(0,\s_{\chi,\varphi}(1))$.

\item For all $(x,y)\in \Om$, there is a unique minimal curve $x\cdot \s_{\chi,\varphi}$ between $x$ and $y$ for some $\chi \in \Cn\setminus\{0\}$ and some $\varphi\in\, (-2\pi,2\pi)$ and one has $|\chi| = d(x,y)$.

\item 
If $(x,y)\not\in \Om$, $x^{-1}\cdot y = [0,t]$ for some $t\in \R^*$, there are infinitely many minimal curves between $x$ and $y$. These curves are all curves of the form $x \cdot \s_{\chi,2\pi}$ if $t>0$, $x \cdot \s_{\chi,-2\pi}$ if $t<0$, for all $\chi \in \Cn$ such that $|\chi| = \sqrt{\pi |t|}$.
\end{enumerate}
\end{thm}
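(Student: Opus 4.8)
The plan is to reduce the statement to minimal curves issuing from the origin, produce all candidate minimizers from first-order necessary conditions, integrate the resulting equations explicitly, and then decide which of these extremals are globally length minimizing by analyzing the endpoint map. First I would exploit left invariance: since both $d$ and the reference Riemannian metric $g_0$ are left invariant, a curve $\s$ is minimal between $x$ and $y$ if and only if $x^{-1}\cdot\s$ is minimal between $0$ and $x^{-1}\cdot y$, so it suffices to treat curves based at $0$. Existence of a minimal curve, and the fact that the infimum in \eqref{e:cc-dist} is attained, then follow from $(\hn,d)$ being a geodesic space together with the routine facts that the metric length of a horizontal $C^1$ curve equals its $g_0$-length and that a length-bounded minimizing sequence of horizontal $C^1$ curves has a uniformly convergent subsequence whose limit is a Lipschitz horizontal curve of the expected length.

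Next I would note that a minimal horizontal curve, parameterized with constant speed, is a sub-Riemannian geodesic, and that $\hn$ carries no non-constant abnormal extremals: $\mathcal H$ is a contact distribution and $\mathcal H+[\mathcal H,\mathcal H]$ spans the whole tangent bundle, which forces the abnormal covector to vanish identically. Hence every minimizer is a normal geodesic, i.e.\ the projection to $\hn$ of a solution of Hamilton's equations for $H(q,p)=\tfrac12\big(\sum_{j=1}^n\scal{p}{X_j(q)}^2+\sum_{j=1}^n\scal{p}{Y_j(q)}^2\big)$. Written in the coordinates $(\xi,\eta,t)$, these equations show that the vertical momentum $\scal{p}{T}$ is constant, that the horizontal momentum rotates in $\Cn$ with a constant angular speed $\varphi$ proportional to it, and a further integration yields exactly the curves $\s_{\chi,\varphi}$, now for arbitrary $\chi\in\Cn\setminus\{0\}$ and $\varphi\in\R$ (the case $\varphi=0$ giving the straight horizontal segments). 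A direct computation then shows that each $\s_{\chi,\varphi}$ is $C^\infty$ with constant speed $|\chi|$ --- which gives the $C^1$ regularity of minimal curves and the identity $|\chi|=d(0,\s_{\chi,\varphi}(1))$ as soon as $\s_{\chi,\varphi}|_{[0,1]}$ is known to be minimal --- and that $\s_{\chi,\varphi}(1)\in L$ precisely when $\varphi\in 2\pi\mathbb Z$, with $\s_{\chi,\pm 2\pi}(1)=[0,\pm|\chi|^2/\pi]$.

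It then remains to select the genuinely minimizing extremals, that is, to show that the sub-Riemannian cut locus of $0$ equals $L\setminus\{0\}$. Using the explicit formulas, I would prove that the endpoint map $(\chi,\varphi)\mapsto\s_{\chi,\varphi}(1)$ restricts to a diffeomorphism from $(\Cn\setminus\{0\})\times(-2\pi,2\pi)$ onto $\hn\setminus L$, and that $\s_{\chi,\varphi}|_{[0,1]}$ is not minimal once $|\varphi|>2\pi$ (it then meets $L$ at parameter $2\pi/|\varphi|<1$). Granting this, a minimal curve from $0$ to a point $y$ with $(0,y)\in\Om$ must be an extremal $\s_{\chi,\varphi}|_{[0,1]}$ with $\chi\neq0$ and $|\varphi|<2\pi$ --- the value $|\varphi|=2\pi$ would force $y\in L$ --- hence is the unique such extremal reaching $y$, and $d(0,y)=|\chi|$; translating by $x$ yields (ii) and in particular the identity in (i) for $|\varphi|<2\pi$. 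For a point $[0,t]$ with $t\neq0$, the endpoint formula shows it is reached exactly by the family $\s_{\chi,\operatorname{sgn}(t)2\pi}$ with $|\chi|=\sqrt{\pi|t|}$; a limiting argument from the interior case, using continuity of $d$, shows that each of these curves is minimal while no extremal with $|\varphi|>2\pi$ is, giving (iii), the remaining case of (i), and $d(0,[0,t])=\sqrt{\pi|t|}$.

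The main obstacle is precisely the cut-time statement: showing that $\s_{\chi,\varphi}|_{[0,1]}$ ceases to minimize once $|\varphi|>2\pi$, equivalently that the first conjugate (and cut) time along a geodesic occurs exactly when it meets $L$. I expect to settle it either through an explicit Jacobi-field computation showing that the differential of the endpoint map degenerates exactly at $\varphi=\pm2\pi$, together with global injectivity of the exponential map on $\{|\varphi|<2\pi\}$ (so that this map is a diffeomorphism there and minimality extends to the boundary by continuity), or through a corner-cutting argument: beyond $\varphi=\pm2\pi$ the geodesic crosses $L$ at an interior point which is also reached by infinitely many other geodesics of the same length, and concatenating with one of them produces a curve with a corner, hence --- after the standard shortening near the corner --- a strictly shorter competitor. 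The remaining ingredients --- the left-translation reduction, existence, the passage to normal geodesics, and the explicit integration --- are classical, and I would only sketch them, referring to \cite{gaveau} and \cite{ar} for the detailed computations.
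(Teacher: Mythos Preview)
The paper does not prove this theorem at all: it is stated as a known result and attributed to the references \cite{gaveau} and \cite{ar} (``We recall below the description of minimal curves in $\hn$, see e.g.\ \cite{gaveau}, \cite{ar}''), and the only related computation carried out later is the observation, in the proof of Lemma~\ref{prop-distcc}, that the endpoint map $\Phi(\chi,\varphi)=\s_{\chi,\varphi}(1)$ is a $C^\infty$-diffeomorphism from $(\Cn\setminus\{0\})\times(-2\pi,2\pi)$ onto $\hn\setminus L$, again with a citation. Your outline is the standard route taken in those references---left-invariance reduction, absence of strict abnormals via the contact structure, integration of the normal Hamiltonian system, and identification of the cut locus with $L\setminus\{0\}$ through the analysis of the endpoint map---so it is correct and aligned with the literature the paper defers to, but there is nothing in the paper itself to compare it against.
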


Here and in the following, $|\chi| = (\sum_{j=1}^n |\chi_j|^2)^{1/2}$ for $\chi = (\chi_1,\dots,\chi_n)\in\Cn$. In particular it follows from this description that $(\hn,d)$ is non-branching.

\begin{prop} [Non-branching property of $\hn$] \label{nonbranching}
 The space $(\hn,d)$ is non-branching, i.e., any two minimal curves which coincide on a non trivial interval coincide on the whole intersection of their intervals of definition. 
\end{prop}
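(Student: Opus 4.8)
The plan is to exploit the explicit description of minimal curves given in Theorem~\ref{geod}, reducing the non-branching property to an elementary statement about the model curves $\s_{\chi,\varphi}$. By left-invariance of the distance and of the group law, it suffices to treat the case where the two minimal curves agree on a non trivial interval and one of them passes through the origin at a given parameter; composing with a left translation we may further arrange that the common portion starts at $0$. So the reduction step I would carry out first is: given two minimal curves $\s,\tilde\s$ defined on intervals $I,\tilde I$ with $\s|_{[a,b]}=\tilde\s|_{[a,b]}$ for some nondegenerate $[a,b]\subset I\cap\tilde I$, translate so that $\s(a)=\tilde\s(a)=0$ and reparameterize so that $a=0$; then both restrictions $\s|_{[0,\delta]}$ and $\tilde\s|_{[0,\delta]}$, suitably dilated and reparameterized to $[0,1]$, are minimal curves starting at $0$, hence of the form $\s_{\chi,\varphi}$ and $\s_{\tilde\chi,\tilde\varphi}$ by part~(i) of Theorem~\ref{geod}.

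The next step is to show that a germ of $\s_{\chi,\varphi}$ at $s=0$ determines $\chi$ and $\varphi$ uniquely, i.e.\ if $\s_{\chi,\varphi}(s)=\s_{\tilde\chi,\tilde\varphi}(s)$ for all $s$ in a neighbourhood of $0$ then $\chi=\tilde\chi$ and $\varphi=\tilde\varphi$. This is where I expect to do the only real computation: one differentiates the explicit formula for $\s_{\chi,\varphi}$ at $s=0$. The horizontal component $i(e^{-i\varphi s}-1)\chi/\varphi$ has derivative $\chi$ at $s=0$, so the initial velocity in the $\Cn$ factor is exactly $\chi$, which forces $\chi=\tilde\chi$; note $|\chi|=|\tilde\chi|\neq 0$ is the common speed, so $\chi\neq 0$. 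Given $\chi=\tilde\chi\neq 0$, matching the curves on an interval then forces $\varphi=\tilde\varphi$ — for instance by comparing the next order term (the second derivative of the $\Cn$-component is $-i\varphi\chi$ at $0$, so $\varphi\chi=\tilde\varphi\chi$ gives $\varphi=\tilde\varphi$), or directly from the vertical component. Since minimal curves are $C^1$ and horizontal by Theorem~\ref{geod}, these derivative computations are legitimate.

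Once uniqueness of the germ is established, the conclusion propagates along the whole common interval by a standard connectedness argument: the set of parameters $s\in I\cap\tilde I$ at which $\s$ and $\tilde\s$ agree together with all their derivatives is closed by continuity, and the germ-uniqueness statement applied at any interior point of agreement (after left-translating the common value to $0$ and rescaling) shows it is open; since it is nonempty by hypothesis and $I\cap\tilde I$ is an interval, it is all of $I\cap\tilde I$. One subtlety to handle carefully is the rescaling: a sub-arc of a minimal curve is again minimal, and applying a dilation $\delta_r$ together with an affine reparameterization sends it to a minimal curve on $[0,1]$ through $0$, which is where part~(i) applies; the homogeneity relations $d(\delta_r(y),\delta_r(z))=r\,d(y,z)$ and $\delta_r(x\cdot y)$ being compatible with left translation make this routine. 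The main obstacle, such as it is, is purely bookkeeping — keeping track of the normalizations so that Theorem~\ref{geod}(i) can be invoked — since the genuinely geometric content is entirely contained in the explicit formulas already recalled.
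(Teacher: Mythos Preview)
Your proposal is correct and is precisely the argument the paper has in mind: the paper gives no proof of this proposition at all, merely remarking just before its statement that ``it follows from this description'' (i.e.\ from Theorem~\ref{geod}). Your sketch makes this explicit by using left-invariance to reduce to curves through the origin and then reading off the parameters $(\chi,\varphi)$ from the first two derivatives of the $\Cn$-component; the only point to keep tidy, as you note, is the rescaling, and one clean way to see it is that for a unit-speed minimal curve through $0$ the formula in Theorem~\ref{geod}(i) depends only on the pair $(\chi/|\chi|,\,\varphi/|\chi|)$, so two such curves with the same germ at $0$ coincide wherever both are defined.
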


Equivalently for any quadruple of points $z$, $x$, $y$, $y'\in \hn$, if $z$ is a midpoint of $x$ and $y$ as well as a midpoint of $x$ and $y'$, then $y=y'$. 

The next lemma collects some differentiability properties of the distance function to a given point to be used later. For $y\in \hn$, we set $L_y:= y\cdot L$.

\begin{lem} \label{prop-distcc} Let $y\in \hn$ and set $d_y(x) := d(x,y)$. Then the function $d_y$ is of class $C^{\infty}$ on $\hn \backslash L_y$ (equipped with the usual differential structure when identifying $\hn$ with $\mathbb{R}^{2n+1}$). Moreover one has 

\renewcommand{\theenumi}{\roman{enumi}}
\begin{enumerate}

\item $|\nabla_H d_y(x)| = 1$ for all $x \in \hn \backslash L_y$ where $$\nabla_H d_y(x) := (X_1 d_y(x)+i Y_1 d_y(x), \dots, X_n d_y(x)+i Y_n d_y(x)).$$ 

\item If $\nabla d_y(x) =  \nabla d_{y'}(x)$ and $d(x,y) = d(x,y')$ for some $x \in \hn \backslash ( L_y\cup L_{y'})$, then $y=y'$. Here $\nabla = (\partial_{\xi_1},\cdots,\partial_{\xi_n},\partial_{\eta_1},\cdots,\partial_{\eta_n}, \partial_t)$ denotes the classical gradient when identifying $\hn$ with $\mathbb{R}^{2n+1}$. 
 \end{enumerate}
\end{lem}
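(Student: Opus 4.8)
\textbf{Proof plan for Lemma~\ref{prop-distcc}.}

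The plan is to use the explicit formulas for minimal curves from Theorem~\ref{geod} to get an explicit formula for $d_y$ away from $L_y$, and then to differentiate. By left invariance of $d$ it suffices to treat the case $y=0$, so I would study $d_0(x) = d(0,x) = d(x,0)$ on $\hn\setminus L$. For $x\notin L$, part (ii) of Theorem~\ref{geod} gives a unique minimal curve $\s_{\chi,\varphi}$ with $\s_{\chi,\varphi}(1)=x$, $\chi\in\Cn\setminus\{0\}$, $\varphi\in(-2\pi,2\pi)$, and $d_0(x)=|\chi|$. So the first step is to invert the map $(\chi,\varphi)\mapsto \s_{\chi,\varphi}(1)$, i.e. to show that it is a $C^\infty$ diffeomorphism from $(\Cn\setminus\{0\})\times(-2\pi,2\pi)$ onto $\hn\setminus L$; this is a smooth exponential-type map and the inversion is just the implicit function theorem once one checks the Jacobian is nonzero, which can be read off the explicit formula. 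Composing the inverse with $(\chi,\varphi)\mapsto|\chi|$ shows $d_0\in C^\infty(\hn\setminus L)$, and by left translation $d_y\in C^\infty(\hn\setminus L_y)$.

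For part~(i), the clean way is to avoid the explicit gradient computation: since $d_y$ is smooth near $x\notin L_y$ and every point of $\hn$ lies on a minimal curve through $x$, and since minimal curves are horizontal $C^1$ curves parameterized proportionally to arc length, one has $|\nabla_H d_y(x)|\le 1$ from the horizontal chain rule (the function $d_y$ decreases at unit speed along the unit-speed minimal curve from $x$ toward $y$, and along any horizontal unit-speed curve it is $1$-Lipschitz), while taking the derivative along that particular minimal curve at $x$ forces equality $|\nabla_H d_y(x)| = 1$. Concretely: if $\s$ is the unit-speed minimal curve with $\s(0)=x$ running away from $y$, then $\tfrac{d}{ds}\big|_{s=0} d_y(\s(s)) = 1 = \scal{\nabla_H d_y(x)}{\dot\s(0)}$ with $|\dot\s(0)|=1$, so $|\nabla_H d_y(x)|\ge 1$; the reverse inequality is the $1$-Lipschitz estimate $|d_y(u)-d_y(v)|\le d(u,v)$ against horizontal curves. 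Alternatively one can simply differentiate the explicit formula for $d_0\circ(\text{inverse map})$ and check $|\nabla_H d_0|\equiv 1$ by direct computation using $X_j,Y_j$; both routes work and I would present whichever is shorter.

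For part~(ii) I would again reduce by left invariance is not available here since two different points $y,y'$ are involved, so I would argue directly with the exponential map. Suppose $x\notin L_y\cup L_{y'}$, $d(x,y)=d(x,y')=:\rho$, and $\nabla d_y(x)=\nabla d_{y'}(x)$ (full Euclidean gradient). The geodesic from $x$ to $y$ is determined by its initial point $x$ and initial velocity $\dot\s(0)$, which is a horizontal vector; horizontality means the $\partial_t$-component of $\dot\s(0)$ is determined by its $(\xi,\eta)$-components, and the $(\xi,\eta)$-components of $\dot\s(0)$ at a unit-speed geodesic are (up to the known normalization) exactly $\nabla_H d_y(x)$, read off from $\nabla d_y(x)$ by the formulas for $X_j,Y_j$. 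Thus $\nabla d_y(x)=\nabla d_{y'}(x)$ forces $\dot\s_y(0)=\dot\s_{y'}(0)$ as full tangent vectors; together with $d(x,y)=d(x,y')$ (equal length) and the fact that geodesics are uniquely determined by initial point, initial velocity and length, we get $y=\s_y(\rho)=\s_{y'}(\rho)=y'$. The only subtlety, and the step I expect to be the main obstacle, is making the ``$\nabla d_y$ determines the initial horizontal velocity of the minimal geodesic'' statement precise and checking it is genuinely a bijective correspondence — i.e. that the map from $\{$unit horizontal vectors at $x\}$ to geodesic directions is injective and that the parameter $\varphi$ (which encodes how much the geodesic spirals) is also recovered. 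This is where one really needs the explicit form of $\s_{\chi,\varphi}$ rather than soft arguments; concretely one must verify from the formulas that the pair $(\nabla d_0, d_0)$, equivalently $(\nabla_H d_0, d_0)$ together with smoothness of $d_0$, pins down $(\chi,\varphi)$ and hence the endpoint $\s_{\chi,\varphi}(1)$ uniquely, which is essentially the injectivity of the diffeomorphism established in the first step combined with the observation that $\nabla d_0(x)$ encodes the covector dual to the initial geodesic direction.
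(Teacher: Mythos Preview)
Your approach to smoothness and to part~(i) is correct and essentially the paper's: both invert the endpoint map $\Phi(\chi,\varphi):=\sigma_{\chi,\varphi}(1)$ as a $C^\infty$-diffeomorphism from $(\Cn\setminus\{0\})\times(-2\pi,2\pi)$ onto $\hn\setminus L$. For~(i) the paper simply records the explicit formula $\nabla_H d_0(x)=\tfrac{\chi}{|\chi|}e^{-i\varphi}$ (citing~\cite{ar}); your soft argument via the geodesic and the $1$-Lipschitz bound is a legitimate and slightly more conceptual alternative.

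Part~(ii), however, has a real gap. Your key claim that ``geodesics are uniquely determined by initial point, initial velocity and length'' is \emph{false} in $\hn$: differentiating the formula in Theorem~\ref{geod} gives $\dot\sigma_{\chi,\varphi}(0)=(\chi,0)$ \emph{independently of $\varphi$}, so for each fixed horizontal initial velocity there is an entire one-parameter family of unit-speed geodesics issuing from $0$, reaching distinct endpoints $\Phi(\chi,\varphi)$. This is the basic sub-Riemannian phenomenon: normal geodesics are determined by initial \emph{covectors}, not vectors, and $\varphi$ is the vertical component of the initial momentum, invisible in the velocity. Your parenthetical ``$(\nabla d_0,d_0)$, equivalently $(\nabla_H d_0,d_0)$'' is exactly where the argument breaks---these are not equivalent, and it is precisely $\partial_t d_y(x)$ that encodes $\varphi$. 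The paper's proof supplies this missing piece via the explicit formula $\partial_t d_0(x)=\tfrac{\varphi}{4|\chi|}$: then $d(x,y)$ gives $|\chi|$, $\partial_t d_y(x)$ gives $\varphi$, and $\nabla_H d_y(x)=\tfrac{\chi}{|\chi|}e^{-i\varphi}$ finally gives $\chi/|\chi|$, so $y^{-1}\cdot x=\Phi(\chi,\varphi)$ and hence $y$ is determined. (Incidentally, left invariance \emph{is} used here, contrary to your remark: one reduces via $d_y=d_0\circ L_{y^{-1}}$ and checks that $\nabla_H d_y(x)=\nabla_H d_0(y^{-1}\cdot x)$ and $\partial_t d_y(x)=\partial_t d_0(y^{-1}\cdot x)$.) If you prefer a geometric rather than computational route, the correct statement is that $-\nabla d_y(x)$ is the initial \emph{covector} of the geodesic from $x$ to $y$, and then uniqueness follows from the Hamiltonian description of sub-Riemannian geodesics; but that requires importing more machinery than the paper's direct computation.
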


\begin{proof} Set $\Phi(\chi,\varphi):=\s_{\chi,\varphi}(1)$ where $\s_{\chi,\varphi}$ is given in Theorem~\ref{geod}. This map is a $C^\infty$-diffeomorphism from $\Cn\setminus\{0\} \times (-2\pi,2\pi)$ onto $\hn\setminus L$ (see e.g. \cite{monti}, \cite{ar}, \cite{juillet}). If $x = \Phi(\chi,\varphi) \in \hn\setminus L$ with $(\chi,\varphi) \in \Cn\setminus\{0\} \times (-2\pi,2\pi)$, one has $d_0(x) = |\chi|$ and 
\begin{equation*}
\nabla_H d_0(x) = \dfrac{\chi}{|\chi|} e^{-i\varphi} \quad \text{and} \quad \partial_t d_0(x) = \dfrac{\varphi}{4|\chi|},
\end{equation*}
see \cite[Lemma 3.11]{ar}. Next, by left invariance, we have $d_y(x) = d_0(y^{-1}\cdot x)$, $\nabla_H d_y(x) = \nabla_H d_0(y^{-1}\cdot x)$ and $\partial_t d_y(x) = \partial_t d_0(y^{-1}\cdot x)$ if $x\in \hn \backslash L_y$ and the lemma follows easily.
\end{proof}

\subsection{Interpolation between measures} \label{sect-interpolation} The notion of interpolation constructed from a transport plan between any two measures will be one of the key notion to be used later. To define it in our geometrical context, we first fix a measurable selection of minimal curves, i.e., a Borel map $S:\hn\times\hn \rightarrow C([0,1], \hn)$ such that for all $x$, $y\in\hn$, $S(x,y)$ is a minimal curve joining $x$ and $y$. The existence of such a measurable recipe to join any two points in $\hn$ by a minimal curve follows from general theorems about measurable selections, see e.g. \cite[Chapter 7]{villani}. Next we set $e_t(\s) := \s(t)$ for all $\s \in C([0,1],\hn)$ and $t\in [0,1]$. In particular $e_t(S(x,y))$ denotes the point lying at distance $t \, d(x,y)$ from $x$ on the selected minimal curve $S(x,y)$ between $x$ and $y$. 

\begin{defi} \label{interpolation} 
 Let $\mu$, $\nu\in \p(\hn)$ and let $\g \in \Pi(\mu,\nu)$. The interpolations between $\mu$ and $\nu$ constructed from $\g$ are defined as the family $((e_t \circ S)_\sharp \g))_{t\in [0,1]}$ of Borel probability measures on $\hn$. 
\end{defi}

Note that these interpolations depend a priori on the measurable selection $S$ of minimal curves. This is actually not a serious issue for our purposes. We will moreover always consider interpolations constructed from transport plans that are concentrated on the set $\Om$ on which $S(x,y)$ is nothing but the unique minimal curve between $x$ and $y$. Note also for further reference that $S\lfloor_\Om$ is continuous.

\subsection{Intrinsic differentiability} Intrinsic differentiability properties of real-valued Lipschitz functions on $\hn$, namely a Rademacher's type theorem, will be useful when considering 1-Lipschitz Kantorovich potentials. This theorem is a particular case of a more general result due to P. Pansu. We say that a group homomorphism $g :\hn \rightarrow \R$ is homogeneous if $g(\delta_r(x)) = r\,g(x)$ for 
all $x\in\hn$ and all $r >0$. 

\begin{defi}
 We say that a map $f:\hn \rightarrow \R$ is Pansu-differentiable at $x \in \hn$ if there exists 
an homogeneous group homomorphism $g :\hn \rightarrow \R$ such that
\begin{equation*}
\lim_{y \rightarrow x} \frac{f(y) - f(x) - g(x^{-1} \cdot y)}{d(y,x)} = 0.
\end{equation*}
The map $g$ is then unique and will be denoted by $D_H f(x)$.
\end{defi}

If $f:\hn \rightarrow \R$ is Pansu-differentiable at $x \in \hn$ then the maps $s \mapsto f(x \cdot \delta_s[e_j, 0])$, resp. $s \mapsto f(x\cdot \delta_s[e_{n+j}, 0])$, are differentiable at $s=0$ and if we denote the corresponding derivatives by $X_jf(x)$, resp. $Y_jf(x)$, then 
\begin{equation*}
D_H f(x)(\xi,\eta,t) = \sum_{j=1}^n \xi_j X_jf(x) + \eta_j Y_jf(x).
\end{equation*}
Here $e_j = (\delta_{1}^{j},\dots,\delta_{n}^{j})\in \Cn$ and $e_{n+j} = (i\delta_{1}^{j},\dots,i\delta_{n}^{j})\in \Cn$. Using similar notations as in the classical smooth case, we then set $\nabla_H f(x) := (X_1 f(x)+i Y_1 f(x), \dots, X_n f(x)+i Y_n f(x))$.

\begin{thm}[Pansu-differentiability theorem] \label{PansuRademacher} \cite{pansu}
Let $f:(\hn,d) \rightarrow \R$ be a $C$-Lipschitz function. Then, for $\Leb$-a.e. $x\in\hn$, the function $f$ is Pansu-differentiable at $x$ and $|\nabla_H f(x)|\leq C$.
\end{thm}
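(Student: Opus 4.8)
This is a Rademacher-type theorem, and the plan is the one that has become standard for Carnot groups: reduce Pansu-differentiability at almost every point to a Lebesgue-point property of the horizontal gradient, the bridge being a sub-elliptic Morrey--Sobolev inequality, which is the only genuinely nontrivial ingredient. I would proceed in three steps.

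\emph{Step 1: horizontal weak derivatives.} Fix $j\in\{1,\dots,n\}$. By Theorem~\ref{geod}(i) the curve $s\mapsto x\cdot\delta_s[e_j,0]$ is a unit-speed geodesic line, so $d(x,x\cdot\delta_s[e_j,0])=|s|$ and $s\mapsto f(x\cdot\delta_s[e_j,0])$ is $C$-Lipschitz on $\R$, hence differentiable for $\mathcal L^1$-a.e.\ $s$ and locally absolutely continuous. The integral curves of $X_j=\partial_{\xi_j}+2\eta_j\partial_t$ foliate $\hn$, and since the coefficients of $X_j$ depend only on the transverse variable $\eta_j$ this foliation is straightened by the $\Leb$-preserving shear $(\xi,\eta,t)\mapsto(\xi,\eta,t-2\eta_j\xi_j)$, in which $X_j=\partial_{\xi_j}$. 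Fubini's theorem then yields that $X_jf(x)$ exists for $\Leb$-a.e.\ $x$ with $|X_jf(x)|\le C$ and, integrating by parts along the lines of the foliation (note $X_j$ is divergence free), that $X_jf$ is the distributional $X_j$-derivative of $f$; the same applies to $Y_jf$ via $Y_j=\partial_{\eta_j}-2\xi_j\partial_t$ and the curves $s\mapsto x\cdot\delta_s[e_{n+j},0]$. Thus $f$ lies in the horizontal Sobolev space $W^{1,p}_{H,\mathrm{loc}}(\hn)$ for every $p\in[1,\infty)$, with $\nabla_Hf\in L^\infty$.

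\emph{Step 2: a sub-elliptic Morrey estimate.} Let $Q:=2n+2$ be the homogeneous dimension. The space $(\hn,d,\Leb)$ supports a $(1,1)$-Poincar\'e inequality (equivalently a Riesz-potential bound for the oscillation of horizontal Sobolev functions), and together with $\Leb(B(x,r))=c_nr^{Q}$ this gives, for each fixed $p>Q$, a constant $C=C(n,p)$ such that for every ball $B(x_0,\rho)$ and every $u\in W^{1,p}_H(B(x_0,2\rho))$, taken in its continuous representative,
\[
 |u(y)-u(x_0)|\ \le\ C\, d(x_0,y)^{1-Q/p}\,\Big(\int_{B(x_0,2\rho)}|\nabla_Hu|^{p}\,d\Leb\Big)^{1/p}\qquad\text{for all }y\in B(x_0,\rho).
\]
Proving this inequality is the main obstacle; in the present paper it can simply be quoted, but in a self-contained treatment it is where the sub-Riemannian structure of $\hn$ really enters.

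\emph{Step 3: conclusion.} Fix $p>Q$. Since $\nabla_Hf\in L^{p}_{\mathrm{loc}}$, the Lebesgue differentiation theorem on the doubling space $(\hn,d,\Leb)$ provides, for $\Leb$-a.e.\ $x_0$, that $x_0$ is a $p$-Lebesgue point of $\nabla_Hf$. Fix such an $x_0$, let $D_Hf(x_0)$ be the homogeneous homomorphism with horizontal components $(X_jf(x_0),Y_jf(x_0))_{j}$, and put $\ell(y):=f(x_0)+D_Hf(x_0)(x_0^{-1}\cdot y)$. A direct computation gives that $\ell$ is smooth with constant horizontal gradient $\nabla_H\ell\equiv\nabla_Hf(x_0)$ and $\ell(x_0)=f(x_0)$, so $h:=f-\ell\in W^{1,p}_{H,\mathrm{loc}}$ with $\nabla_Hh=\nabla_Hf-\nabla_Hf(x_0)$ and $h(x_0)=0$. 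Applying the estimate of Step 2 to $u=h$ on $B(x_0,2d(x_0,y))$ and using $\Leb(B(x,r))=c_nr^{Q}$ to rewrite the right-hand side as $d(x_0,y)$ times the $p$-mean of $|\nabla_Hf-\nabla_Hf(x_0)|$ over $B(x_0,2d(x_0,y))$, one obtains
\[
 \big|f(y)-f(x_0)-D_Hf(x_0)(x_0^{-1}\cdot y)\big| = o\big(d(x_0,y)\big)\qquad\text{as }y\to x_0,
\]
which is precisely Pansu-differentiability of $f$ at $x_0$ with differential $D_Hf(x_0)$. Finally, specializing $y=x_0\cdot\delta_r[v,0]$ with $v\in\Cn$, $|v|=1$, so that $d(x_0,y)=r$ and $D_Hf(x_0)(x_0^{-1}\cdot y)=r\,D_Hf(x_0)([v,0])$, the expansion just obtained together with $|f(y)-f(x_0)|\le Cr$ forces $|D_Hf(x_0)([v,0])|\le C$; taking the supremum over $|v|=1$ yields $|\nabla_Hf(x_0)|\le C$, which completes the proof.
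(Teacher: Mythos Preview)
The paper does not prove this statement; it is quoted from \cite{pansu} without argument, so there is no proof in the paper to compare against.

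Your proof is correct and follows what has become a standard alternative route. Pansu's original argument works directly with the group and dilation structure, establishing differentiability along horizontal one-parameter subgroups and then assembling a full Pansu-differential via an iterated Fubini/stratification argument, with no appeal to Sobolev or Poincar\'e machinery. Your approach instead packages the hard analysis into the sub-elliptic Morrey inequality of Step~2 (a genuine result, equivalent in strength to a Poincar\'e inequality for H\"ormander vector fields on $\hn$), after which Step~3 reduces Pansu-differentiability to a Lebesgue-point property of $\nabla_Hf$ by a soft linearization. This is essentially Cheeger's differentiation scheme specialized to $\hn$; its advantage is modularity (once Step~2 is granted the remainder is routine), its cost that Step~2 is itself nontrivial to establish from scratch. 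One cosmetic point: in Step~3 you apply the Morrey estimate with $y$ on the boundary of $B(x_0,\rho)$; take $\rho$ marginally larger than $d(x_0,y)$, or state Step~2 for closed balls, to be literally correct.
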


The next lemma will be used to prove that any optimal transport plan is concentrated on the set $\Om$.

\begin{lem} \label{uniquegeod}
 Let $u\in \text{Lip}_1(d)$, $x\in\hn$ be such that $u$ is Pansu-differentiable at $x$ with $|\nabla_H u(x)| \leq 1$ and let $y\in\hn$ be such that $u(x) - u(y) = d(x,y)$. Then there exists a unique minimal curve between $x$ and $y$.
\end{lem}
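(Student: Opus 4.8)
The plan is to show that the hypotheses force $(x,y)$ to be trivial or to lie in $\Om$, so that uniqueness follows from Theorem~\ref{geod}(ii). If $y=x$ the only minimal curve is constant, so assume $y\neq x$ and let $\s\colon[0,1]\to\hn$ be an arbitrary minimal curve from $x$ to $y$ (one exists since $\hn$ is geodesic). Since $u$ is $1$-Lipschitz and $\s$ is parameterized proportionally to arc length, a telescoping argument over finite partitions of $[0,1]$ combined with the equality $u(x)-u(y)=d(x,y)$ gives $u(\s(s))=u(x)-s\,d(x,y)$ and $d(x,\s(s))=s\,d(x,y)$ for all $s\in[0,1]$. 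Translating by $x^{-1}$, the curve $\tilde\s:=x^{-1}\cdot\s$ is again minimal (left translations are isometries), starts at $0$ and is non-trivial, so by Theorem~\ref{geod}(i) we have $\tilde\s=\s_{\chi,\varphi}$ for some $\chi\in\Cn\setminus\{0\}$ and $\varphi\in[-2\pi,2\pi]$ with $|\chi|=d(0,\s_{\chi,\varphi}(1))=d(x,y)$.

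Next I would use the Pansu-differentiability of $u$ at $x$ along $\s$: since $\s(s)\to x$ as $s\to 0^+$,
\[
-s\,d(x,y)=u(\s(s))-u(x)=D_Hu(x)(x^{-1}\cdot\s(s))+o\big(d(\s(s),x)\big)=D_Hu(x)(\tilde\s(s))+o(s).
\]
A direct first-order expansion of the explicit formula for $\s_{\chi,\varphi}$ shows that the $\Cn$-component of $\tilde\s(s)$ equals $s\chi+O(s^2)$ (for every $\varphi$, including $\pm 2\pi$), hence $D_Hu(x)(\tilde\s(s))=s\sum_{j=1}^n\big((\operatorname{Re}\chi_j)X_ju(x)+(\operatorname{Im}\chi_j)Y_ju(x)\big)+O(s^2)=s\,\operatorname{Re}\scal{\chi}{\nabla_Hu(x)}+O(s^2)$, where $\scal{a}{b}=\sum_j a_j\overline{b_j}$ denotes the Hermitian product on $\Cn$. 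Dividing by $s$ and letting $s\to 0^+$ yields $\operatorname{Re}\scal{\chi}{\nabla_Hu(x)}=-d(x,y)=-|\chi|$. Since $|\operatorname{Re}\scal{\chi}{\nabla_Hu(x)}|\leq|\chi|\,|\nabla_Hu(x)|\leq|\chi|$ by Cauchy--Schwarz and the hypothesis $|\nabla_Hu(x)|\leq 1$, equality holds throughout, which forces $|\nabla_Hu(x)|=1$ and $\chi=-d(x,y)\,\nabla_Hu(x)$.

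The crucial consequence is that $\chi$ is the \emph{same} for every minimal curve $\s$ between $x$ and $y$. Now discuss $x^{-1}\cdot y$. The case $x^{-1}\cdot y=0$ is the trivial one already handled; if $x^{-1}\cdot y\notin L$, i.e. $(x,y)\in\Om$, uniqueness is precisely Theorem~\ref{geod}(ii). It remains to exclude $x^{-1}\cdot y=[0,\tau]$ with $\tau\neq 0$: in that case Theorem~\ref{geod}(iii) describes the minimal curves between $x$ and $y$ as the curves $x\cdot\s_{\chi',\pm 2\pi}$ with $\chi'$ ranging over the whole sphere $\{\chi'\in\Cn:|\chi'|=\sqrt{\pi|\tau|}\}$, distinct $\chi'$ giving distinct curves; since $n\geq 1$ this produces infinitely many minimal curves with infinitely many distinct initial data $\chi'$, contradicting the fact just established that every minimal curve between $x$ and $y$ has initial datum $\chi=-d(x,y)\nabla_Hu(x)$. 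Hence this case cannot occur, and uniqueness holds in all remaining cases.

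I expect the only delicate step to be the passage through Pansu-differentiability: correctly identifying $x^{-1}\cdot\s$ with one of the explicit curves $\s_{\chi,\varphi}$, extracting its first-order Taylor expansion at $s=0$, and recognizing that the equality case in Cauchy--Schwarz pins down simultaneously $|\nabla_Hu(x)|=1$ and the initial direction $\chi$. Once this is in place, the final elimination of case (iii) is a routine application of the classification of minimal curves in Theorem~\ref{geod}.
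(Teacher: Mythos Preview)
Your proof is correct and follows essentially the same route as the paper's: both show that for any minimal curve $\s = x\cdot\s_{\chi,\varphi}$ between $x$ and $y$, differentiating the identity $u(\s(s))=u(x)-s\,d(x,y)$ at $s=0$ and invoking the equality case of Cauchy--Schwarz forces $\chi=-d(x,y)\,\nabla_H u(x)$, whence uniqueness via the classification in Theorem~\ref{geod}. The only cosmetic differences are that the paper computes $\tfrac{d}{dt}u(\s(t))|_{t=0}$ directly via the horizontal $C^1$-smoothness of $\s$ and the chain rule, whereas you expand through the definition of Pansu-differentiability and a first-order Taylor expansion of $\s_{\chi,\varphi}$; and you make the exclusion of case~(iii) explicit, while the paper leaves it implicit in the appeal to Theorem~\ref{geod}.
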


\begin{proof}
 Let $\s:[0,1]\rightarrow\hn$ be a minimal curve between $x$ and $y$. Then $\s$ is a horizontal $C^1$-smooth curve and if $\s(t) = (\s_1(t),\dots,\s_{2n+1}(t)) \in \hn \thickapprox \R^{2n+1}$, one has for all $t\in [0,1]$,
\begin{equation*}
 \dot\s(t) = \sum_{j=1}^n \dot\s_j(t)\, X_j(\s(t)) + \dot\s_{n+j}(t)\, Y_j(\s(t))
\end{equation*}
and $|\dot\s_H(t)| = d(x,y)$ where $\dot\s_H(t) := (\dot\s_1(t) +i\,\dot\s_{n+1}(t),\dots,\dot\s_n(t) +i\,\dot\s_{2n}(t))\in\Cn$. On the other hand, one has 
\begin{equation*}
 u(x) - u(\s(t)) = d(x,\s(t)) = t\, d(x,y)
\end{equation*}
for all $t\in [0,1]$. Differentiating this equality with respect to $t$, we get
\begin{equation*}
 \sum _{j=1}^n \dot\s_j(0)\, X_ju(x) + \dot\s_{n+j}(0) \, Y_ju(x)= \dfrac{d}{dt}\,u(\s(t))_{|_{t=0}} = - d(x,y).
\end{equation*}
All together, it follows that 
\begin{equation*}
d(x,y) = |\sum _{j=1}^n \dot\s_j(0)\, X_ju(x) + \dot\s_{n+j}(0) \, Y_ju(x)| \leq  |\nabla_H u(x)|\, |\dot\s_H(0)| \leq d(x,y).
\end{equation*}
In particular, there is equality in all the previous inequalities which implies in turn that $\dot\s_H(0) = - \,d(x,y) \,\nabla_H u (x)$. On the other hand one knows from Theorem~\ref{geod} that $\s = x\cdot \s_{\chi,\varphi}$ for some $\chi \in \Cn\setminus\{0\}$ and $\varphi\in\, [-2\pi,2\pi]$. In particular one has $\dot\s_H(0) = \chi$. It follows that $\chi = - \,d(x,y) \,\nabla_H u (x)$ is uniquely determined hence there is a unique minimal curve joining $x$ and $y$ according once again to the description given in Theorem~\ref{geod}.
\end{proof}

\section{Properties of $\Pi_1(\mu,\nu)$ and $\Pi_2(\mu,\nu)$} \label{sect:optplanning}

Let $\mu$, $\nu \in \p_c(\hn)$ be fixed. We denote by $\Pi_1(\mu,\nu)$ the set of optimal transport plans solution to Kantorovich transport problem~\eqref{e:MK} between $\mu$ and $\nu$ with cost $c(x,y) = d(x,y)$. 

We first prove some geometric properties of optimal transport plans. These properties follow from the behavior of minimal curves in $(\hn,d)$. In the next lemma, we prove that any optimal transport plan is concentrated on the set $\Om$ (see \eqref{e:omega}) of pair of points that are connected by a unique minimal curve.

\begin{lem} \label{pi1.1}
 Let $\g\in \Pi_1(\mu,\nu)$ and assume that $\mu\ll\Leb$. Then for $\g$-a.e. $(x,y)$, there exists a unique minimal curve between $x$ and $y$.
\end{lem}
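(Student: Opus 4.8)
The plan is to combine the duality theory from Section~\ref{opttrans} with the Pansu--Rademacher theorem and the uniqueness criterion recorded in Lemma~\ref{uniquegeod}. Concretely, since $\mu,\nu\in\p_c(\hn)$, Theorem~\ref{1lip_potential} gives a $1$-Lipschitz Kantorovich potential $u\in\text{Lip}_1(d)$ such that any $\g\in\Pi_1(\mu,\nu)$ is concentrated on the set
\begin{equation*}
\G := \{(x,y)\in\hn\times\hn;\ u(x)-u(y)=d(x,y)\}.
\end{equation*}
By the Pansu--Rademacher theorem (Theorem~\ref{PansuRademacher}), there is a Borel set $N\subset\hn$ with $\Leb(N)=0$ such that $u$ is Pansu-differentiable with $|\nabla_H u(x)|\leq 1$ at every $x\in\hn\setminus N$.

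Next I would use the hypothesis $\mu\ll\Leb$: since $\Leb(N)=0$ we get $\mu(N)=0$, and because $\g$ has first marginal $\mu$, the set $N\times\hn$ is $\g$-negligible, i.e.\ $\g$-a.e.\ $(x,y)$ satisfies $x\notin N$. Intersecting with $\G$, we conclude that for $\g$-a.e.\ $(x,y)$ one has simultaneously: $u$ is Pansu-differentiable at $x$ with $|\nabla_H u(x)|\leq 1$, and $u(x)-u(y)=d(x,y)$. For any such pair, Lemma~\ref{uniquegeod} applies verbatim and yields the existence of a unique minimal curve between $x$ and $y$. This proves the lemma.

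The only point requiring a little care is measurability: one should check that $\G$ is Borel (it is, since $u$ and $d$ are continuous, so $(x,y)\mapsto u(x)-u(y)-d(x,y)$ is continuous and $\G$ is its zero set) and that the ``bad'' set of pairs we discard, namely $(N\times\hn)\cup(\hn\times\hn\setminus\G)$, is $\g$-negligible; the first piece is $\g$-null because $\mu(N)=0$, and the second because $\g$ is concentrated on $\G$. There is no genuine obstacle here — the real work has already been done in Lemma~\ref{uniquegeod}, which isolates exactly the combinatorial/geometric fact about minimal curves in $\hn$ (via the explicit description in Theorem~\ref{geod}) that makes Pansu-differentiability of a calibrating potential force uniqueness of the geodesic. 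The present lemma is just the routine ``almost everywhere'' packaging of that pointwise statement, the absolute continuity of $\mu$ being used precisely to push the $\Leb$-null non-differentiability set of $u$ to a $\g$-null set of source points.
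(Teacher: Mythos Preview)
Your argument is correct and follows exactly the same route as the paper's own proof: take a $1$-Lipschitz Kantorovich potential from Theorem~\ref{1lip_potential}, apply the Pansu--Rademacher theorem (Theorem~\ref{PansuRademacher}) together with $\mu\ll\Leb$ to get Pansu-differentiability at $\mu$-a.e.\ source point, and invoke Lemma~\ref{uniquegeod}. The additional measurability remarks you include are fine but not needed for the argument.
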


\begin{proof}
Let $u\in \text{Lip}_1(d)$ be a Kantorovich potential associated to Kantorovich transport problem~\eqref{e:MK} between $\mu$ and $\nu$ with cost $c(x,y) = d(x,y)$ (see Section~\ref{opttrans} and Theorem~\ref{1lip_potential} there). Since $u\in \text{Lip}_1(d)$, we know from Theorem \ref{PansuRademacher} that for $\Leb$-a.e., and  hence $\mu$-a.e., $x\in\hn$, $u$ is Pansu-differentiable at $x$ with $|\nabla_H u(x)|\leq 1$. Then the conclusion follows from Lemma~\ref{uniquegeod} since $u(x) - u(y) = d(x,y)$ for $\g$-a.e. $(x,y)$ (see Theorem~\ref{1lip_potential}). 
\end{proof}

The next lemma says that minimal curves used by an optimal transport plan cannot bifurcate. It follows essentially from the non-branching property of $(\hn,d)$.

\begin{lem} \label{pi1.2}
 Let $\g\in \Pi_1(\mu,\nu)$. Then $\g$ is concentrated on a set $\G$ such that the following holds. For all $(x,y)\in \G$ and $(x',y')\in \G$ such that $x\not=y$ and $x\not=x'$, if $x'$ lies on a minimal curve between $x$ and $y$ then all points $x$, $x'$, $y$ and $y'$ lie on the same minimal curve. More precisely, there exists a minimal curve $\s:[a,b]\rightarrow\hn$ such that $x=\s(a)$, $y=\s(t)$ for some $t\in \,(a,b]$, $x'=\s(s)$ for some $s\in\, (a,t]$ and $y'=\s(t')$ for some $t'\in [s,b]$.
\end{lem}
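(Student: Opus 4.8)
The plan is to extract from $c$-cyclical monotonicity (Theorem~\ref{ccycl}) together with the geometric description of minimal curves in $\hn$ (Theorem~\ref{geod}) and the non-branching property (Proposition~\ref{nonbranching}) a set $\G$ on which the claimed no-bifurcation property holds. First I would fix, via Theorem~\ref{ccycl}, a $d$-cyclically monotone Borel set $\G$ on which $\g$ is concentrated; by Lemma~\ref{pi1.1} we may also assume (intersecting with the $\g$-full set produced there, noting $\mu\ll\Leb$ is available since $\g\in\Pi_1(\mu,\nu)$) that every pair in $\G$ is joined by a \emph{unique} minimal curve, though as written the statement does not even require this and cyclical monotonicity alone should suffice. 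The key quantitative input is the $2$-point cyclical monotonicity inequality: for $(x,y),(x',y')\in\G$,
\begin{equation*}
d(x,y) + d(x',y') \le d(x,y') + d(x',y).
\end{equation*}

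Next I would run the main geometric argument. Suppose $(x,y),(x',y')\in\G$ with $x\ne y$, $x\ne x'$, and $x'$ lying on some minimal curve $\s:[a,b]\to\hn$ from $x=\s(a)$ to $y=\s(b)$, say $x'=\s(s)$ with $a<s\le b$ (here $s>a$ because $x\ne x'$). Then $d(x,x') = s'$ and $d(x',y) = d(x,y) - d(x,x')$ where I write $s' := d(x,x')$, using that subarcs of minimal curves are minimal. The triangle inequality gives $d(x,y') \le d(x,x') + d(x',y')$, so plugging into the cyclical monotonicity inequality yields
\begin{equation*}
d(x,y) + d(x',y') \le d(x,y') + d(x',y) \le d(x,x') + d(x',y') + d(x,y) - d(x,x') = d(x,y) + d(x',y'),
\end{equation*}
forcing equality throughout. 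In particular $d(x,y') = d(x,x') + d(x',y')$, i.e.\ $x'$ lies on a minimal curve from $x$ to $y'$, and that curve is a genuine geodesic through the chain $x, x', y'$. The remaining task is to glue: we have a minimal curve through $x, x', y$ (the original $\s$) and a minimal curve through $x, x', y'$, both passing through the two distinct points $x$ and $x'$. By the non-branching property of $\hn$ (Proposition~\ref{nonbranching}, equivalently Theorem~\ref{geod}(ii)--(iii)), two minimal curves agreeing on the nontrivial segment $[x,x']$ must coincide on the union of their domains; concatenating appropriately gives a single minimal curve $\s:[a,b]\to\hn$ with $x=\s(a)$, $y=\s(t)$, $x'=\s(\tilde s)$, $y'=\s(t')$, and the ordering constraints $\tilde s\in(a,t]$, $t'\in[\tilde s,b]$ follow by tracking arc-length parameters (with $t = d(x,y)$-position, $\tilde s = d(x,x')$-position, etc., after reparametrizing to a common speed).

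The main obstacle I expect is the gluing/ordering bookkeeping in the last step, specifically handling the case distinctions: whether $y$ lies before or after $x'$ on $\s$ (it must be at or after, since $x'\in[x,y]$ gives $d(x,x')\le d(x,y)$), whether $y'$ lies before or after $x'$, and the degenerate subcases $x'=y$ or $y=y'$ or $x'=y'$, together with assembling the two geodesics (one from $x$ through $x'$ to $y$, one from $x$ through $x'$ to $y'$) into \emph{one} curve. The clean way is: non-branching forces the portion from $x$ to $\max(x',y,y')$-in-arc-length to be a single geodesic ray emanating from $x$ through $x'$; then $y$ and $y'$ are both points on this ray at the prescribed distances from $x$, which is exactly the asserted parametrization after choosing $b$ large enough to contain all four points and $a$ so that $\s(a)=x$. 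One subtlety worth isolating as a preliminary remark is that a subarc of a minimal curve is itself minimal (immediate from additivity of length and the triangle inequality), which is what licenses writing $d(x',y) = d(x,y)-d(x,x')$ and $d(x,x') + d(x',y') = d(x,y')$ as geodesic-subarc relations rather than mere numerical identities.
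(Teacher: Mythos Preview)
Your proposal is correct and follows essentially the same route as the paper: extract a $d$-cyclically monotone set $\G$ via Theorem~\ref{ccycl}, use the two-point inequality together with $d(x',y)=d(x,y)-d(x,x')$ to force $d(x,y')=d(x,x')+d(x',y')$, and then invoke non-branching (Proposition~\ref{nonbranching}) to merge the two geodesics sharing the nontrivial arc $[x,x']$ into a single minimal curve. The paper phrases the middle step as a contradiction (assuming the concatenated curve $\tilde\s$ is not length-minimizing) whereas you do it by a direct squeeze; these are the same computation. Your aside about invoking Lemma~\ref{pi1.1} is correctly flagged as unnecessary (and indeed the lemma as stated does not assume $\mu\ll\Leb$), and your bookkeeping on the ordering $t'\ge \tilde s$ is justified by $d(x,y')=d(x,x')+d(x',y')\ge d(x,x')$.
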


\begin{proof} Let $(x,y)\in \hn \times \hn$ and $(x',y')\in \hn \times \hn$ such that $x\not=y$ and $x'\not=x$. Assume that $x'\in \s((0,d(x,y)])$ where $\s:[0,d(x,y)] \rightarrow \hn$ is a unit-speed minimal curve between $x$ and $y$. Let $\s'$ be a unit-speed minimal curve between $x'$ and $y'$ parameterized on $[d(x,x'),d(x,x')+d(x',y')]$. Assume moreover that 
\begin{equation*} 
 d(x,y) + d(x',y') \leq d(x,y')+d(x',y).
\end{equation*}
Recall that this holds true for $\g$-a.e. $(x,y)$ and $(x',y')$ by Theorem \ref{ccycl}. Then the curve $\tilde \s :[0, d(x,x')+d(x',y')]\rightarrow\hn$ which coincides with $\s$ on $[0,d(x,x')]$ and $\s'$ on $[d(x,x'),d(x,x')+d(x',y')]$ is a length minimizing curve between $x$ and $y'$. Indeed, otherwise we would have 
\begin{equation*}
 d(x,y') < l(\tilde\s) = l(\s_{|[0,d(x,x')]}) + l(\s'_{|[d(x,x'),d(x,x')+d(x',y')]}) = d(x,x') + d(x',y').
\end{equation*}
Since $x'$ lies on a minimal curve between $x$ and $y$, we have $d(x,x') + d(x',y) = d(x,y)$ and we get 
\begin{equation*}
  d(x,y') + d(x',y) < d(x,y) + d(x',y')
\end{equation*}
which gives a contradiction. It follows that $\s$ and $\tilde\s$ are unit-speed minimal curves that coincide on the non trivial interval $[0,d(x,x')]$. Since $\hn$ is non-branching (see Proposition~\ref{nonbranching}), this implies that $\s$ and $\tilde \s$ are sub-arcs of the same minimal curve, namely $\s$ if $d(x,y') \leq d(x,y)$ and $\tilde\s$ otherwise, on which all points $x$, $x'$, $y$ and $y'$ lie. And the conclusion follows.
\end{proof}

We denote by $\Pi_2(\mu,\nu)$ the set of transport plans solution to the secondary variational problem:
\begin{equation*}
\min_{\g \in \Pi_1(\mu,\nu)} \int_{\hn\times\hn} d(x,y)^2\,d\g(x,y).
\end{equation*}

Optimal transport plans selected through the variational approximations to be introduced in Section~\ref{varapprox} will be solution to this secondary variational problem. The next lemma gives a one-dimensional monotonicity condition along minimal curves used by optimal transport plans in $\Pi_2(\mu,\nu)$. This follows essentially from a constrained version of $d^2$-cyclical monotonicity.

\begin{lem} \label{pi2}
Let $\g\in \Pi_2(\mu,\nu)$. Then $\g$ is concentrated on a set $\G$ such that the following holds. For all $(x,y)\in \G$ and $(x',y')\in \G$ such that $x\not=y$ and $x\not=x'$, if $x'$ lies on a minimal curve between $x$ and $y$ then all points $x$, $x'$, $y$ and $y'$ lie on the same minimal curve ordered in that way.
\end{lem}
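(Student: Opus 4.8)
The plan is to combine the geometric non-branching information already established in Lemma~\ref{pi1.2} with a constrained $d^2$-cyclical monotonicity argument to pin down the \emph{order} of the four points along the common minimal curve. First I would recall from Theorem~\ref{ccycl}, applied to the secondary problem, that any $\g\in\Pi_2(\mu,\nu)$ is concentrated on a set $\G$ which is simultaneously $d$-cyclically monotone (so that Lemma~\ref{pi1.2} applies and all four points $x,x',y,y'$ lie on a single minimal curve $\s$ whenever $x'$ lies on a minimal curve between $x$ and $y$) and, on the subclass of pairs that \emph{do not} change the value of the primary functional, $d^2$-cyclically monotone. More precisely, I would work with the set of pairs $(x_1,y_1),(x_2,y_2)\in\G$ for which the transposition $(x_1,y_2),(x_2,y_1)$ still realizes equality $d(x_1,y_1)+d(x_2,y_2)=d(x_1,y_2)+d(x_2,y_1)$: along such admissible transpositions, optimality of $\g$ for the secondary problem forces the $d^2$-inequality $d(x_1,y_1)^2+d(x_2,y_2)^2\le d(x_1,y_2)^2+d(x_2,y_1)^2$, after shrinking $\G$ to a further full-measure Borel subset.

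Next I would set up coordinates on the common minimal curve given by Lemma~\ref{pi1.2}: write $x=\s(a)$, $x'=\s(s)$, $y=\s(t)$, $y'=\s(t')$ with $a<s\le t$ and $s\le t'\le b$, all parameters measured by arc-length (after reparameterizing $\s$ by unit speed, say). The content to prove is that $t\le t'$, i.e.\ that along a shared ray the targets are ordered the same way as the sources. Suppose for contradiction that $t'<t$, so the order along $\s$ is $x<x'\le y'<y$ (and $x'\le y'$ still holds). Because all the relevant distances between these points are just differences of parameters along $\s$ (using that sub-arcs of a minimal curve are minimal), the crossing pair $(x,y)$ and $(x',y')$ satisfies $d(x,y)+d(x',y')=d(x,y')+d(x',y)$ — this is the key flatness coming from collinearity on $\s$ — so the transposition is admissible and the constrained $d^2$-monotonicity inequality applies. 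Writing everything in terms of the parameters, this inequality becomes a purely one-dimensional statement about four real numbers on a line, and a direct computation shows it is \emph{violated} strictly when the order is $x<x'<y'<y$ with $x'<y'$; this contradiction forces $t\le t'$, i.e.\ the order $x$, $x'$, $y$, $y'$ along $\s$, which is the assertion.

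The main obstacle, and the step I would spend the most care on, is the measure-theoretic bookkeeping that produces a single $\g$-full-measure Borel set $\G$ on which \emph{both} the $d$-cyclical monotonicity of Lemma~\ref{pi1.2} and the constrained $d^2$-monotonicity hold simultaneously for \emph{all} pairs (not just a.e.\ pairs): one has to apply Theorem~\ref{ccycl} to an auxiliary transport problem (for instance with cost $d$ first, then, restricted to the already-selected $\G$, with a cost built from $d^2$) and intersect the resulting monotone sets, checking that the restriction of $\g$ to $\Pi_1$ is still optimal for the relevant constrained problem so that Theorem~\ref{ccycl} is legitimately applicable. The remaining geometric input — that collinear points on a minimal curve make the mixed distance sum flat, so that the transposition is admissible — is immediate from the definition of minimal curves and the fact that their sub-arcs are minimal, and the final one-dimensional inequality is an elementary calculation (expanding $(t-a)^2+(t'-s)^2$ versus $(t'-a)^2+(t-s)^2$ and noting the sign of $2(t-t')(s-a)$).
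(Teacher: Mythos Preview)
Your approach is correct and essentially the same as the paper's. The paper makes your ``constrained $d^2$-monotonicity on admissible transpositions'' precise by introducing the lower semicontinuous cost $\beta(x,y)$ equal to $d(x,y)^2$ when $u(x)-u(y)=d(x,y)$ and $+\infty$ otherwise (with $u$ a $1$-Lipschitz Kantorovich potential) and applying Theorem~\ref{ccycl} directly to $\beta$ --- this is exactly the ``auxiliary transport problem'' you allude to and resolves the measure-theoretic bookkeeping you flag as the main obstacle --- and then carries out the same one-dimensional computation you describe, written with $d(x,x')$ in place of your arc-length parameters.
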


In other words, there exists a minimal curve $\s:[a,b]\rightarrow\hn$ such that $\s(a) = x$, $\s(t)=y$ for some $t\in \,(a,b]$, $\s(s) = x'$ for some $s\in\, (a,t]$ and $\s(t') = y'$ for some $t'\in [t,b]$.

\begin{proof} First, as a classical fact, one can rephrase the secondary variational problem as a classical Kantorovich transport problem \eqref{e:MK} between $\mu$ and $\nu$ with cost $c(x,y) = \beta(x,y)$ with 
\begin{equation*}
\beta(x,y) = \begin{cases}
              d(x,y)^2 \quad \text{if } u(x)-u(y) = d(x,y),\\
              +\infty \qquad \phantom{\text{if}} \text{otherwise},
             \end{cases}
\end{equation*}
where $u\in \text{Lip}_1(d)$ is a Kantorovich potential associated to Kantorovich transport problem~\eqref{e:MK} between $\mu$ and $\nu$ with cost $c(x,y) = d(x,y)$ (see Section~\ref{opttrans} and Theorem~\ref{1lip_potential} there). Since $\beta$ is lower semicontinuous and $\int_{\hn\times \hn} \beta(x,y) \,d\g(x,y)<+\infty$ for all $\g \in \Pi_2(\mu,\nu)$, it follows from Theorem \ref{ccycl} that any $\g \in \Pi_2(\mu,\nu)$ is concentrated on a $\beta$-cyclically monotone set. So, taking into account the fact that $\Pi_2(\mu,\nu)\subset\Pi_1(\mu,\nu)$, we know that $\g \in \Pi_2(\mu,\nu)$ is concentrated on a set $\G$ such that 
\begin{equation*}
 u(x) - u(y) = d(x,y)
\end{equation*}
for all $(x,y)\in \G$,
\begin{equation*}
 \beta(x,y) +\beta(x',y') \leq \beta(x,y')+\beta(x',y)
\end{equation*}
for all $(x,y)\in \G$ and $(x',y')\in \G$ and the conclusion of Lemma \ref{pi1.2} holds.

Then let $(x,y)\in \G$ and $(x',y')\in \G$ be as in the statement. By Lemma \ref{pi1.2}, the conclusion will follow if we show that $d(x',y) \leq d(x',y')$. First we check that $\beta(x',y) = d(x',y)^2$ and $\beta(x,y') = d(x,y')^2$. We have 
\begin{equation*}
u(x) \leq u(x') + d(x,x') \leq u(y) + d(x',y) + d(x,x') = d(x,y) + u(y) = u(x)
\end{equation*}
hence all these inequalities are equalities. In particular, we get that $u(x') = u(y) + d(x',y)$ hence $\beta(x',y) = d(x',y)^2$. We also get that  
\begin{equation*}
 u(x) = d(x,x') + u(x') = d(x,x') + u(y') + d(x',y') = u(y') + d(x,y')
\end{equation*}
hence $\beta(x,y') = d(x,y')^2$. If $d(x',y')< d(x',y)$, we  get
\begin{equation*}
\begin{split}
 \beta(x,y') + &\beta(x',y) - \beta(x',y') - \beta(x,y)\\
&= d(x,y')^2  + d(x',y)^2 - d(x',y')^2 - d(x,y)^2 \\
&= (d(x,x')+d(x',y'))^2 + d(x',y)^2 - d(x',y')^2 - (d(x,x')+d(x',y))^2\\
&= 2 \, d(x,x') (d(x',y') - d(x',y))<0
\end{split}
\end{equation*}
which gives a contradiction.
\end{proof}

\section{Variational approximations} \label{varapprox}

We introduce variational approximations in the spirit of \cite{ap} (see also \cite{cfm}, \cite{akp}) by rephrasing in our geometrical context the variational approximations considered recently in \cite{sant}. This approximation procedure will be used to select optimal transport plans that will be eventually proved to be induced by transport maps.

Let $\mu$, $\nu \in \p_c(\hn)$ be fixed. Let $K$ be a compact subset of $\hn$ such that $\spt{\mu}\cup\spt{\nu}\subset K$ and set
\begin{equation*}
 \Pi := \{\g \in \p(\hn\times\hn);\, (\pi_1)_\sharp\g=\mu,\, \spt{(\pi_2)_\sharp\g} \subset K\}.
\end{equation*}
For $\e>0$ fixed and $\g \in \Pi$, we set
\begin{multline*}
 C_\e(\g) := 
\frac{1}{\e}\, W_1((\pi_2)_\sharp\g,\nu) + \int_{\hn\times\hn} d(x,y)\,d\g(x,y) \\ +  \e \int_{\hn\times\hn} d(x,y)^2\,d\g(x,y) + \e^{6n+8} \card{(\spt{(\pi_2)_\sharp\g})}
\end{multline*}
and consider the family of minimization problems:
\begin{equation} \label{e:varapprox}
 \min\{ C_\e(\g);\, \g \in \Pi\}. \tag{$P_\e$}
\end{equation}
Here $W_1$ denotes the 1-Wasserstein distance defined for any two probability measures $\mu_1$, $\mu_2 \in \p(\hn)$ by 
\begin{equation*}
 W_1(\mu_1,\mu_2) := \min_{\g \in \Pi(\mu_1,\mu_2)} \int_{\hn\times\hn} d(x,y)\,d\g(x,y).
\end{equation*}
First we note that \eqref{e:varapprox} always admits solutions.

\begin{thm}
 For any $\e>0$, the problem \eqref{e:varapprox} admits at least one solution and $\min\{ C_\e(\g);\, \g \in \Pi\}<+\infty$.
\end{thm}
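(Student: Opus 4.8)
The plan is to prove existence via the direct method of the calculus of variations, after first checking that the infimum is finite. Finiteness is the easy half: take any $\gamma_0 \in \Pi(\mu,\nu)$, for instance $\gamma_0 = \mu\otimes\nu$. Then $\gamma_0 \in \Pi$ (its first marginal is $\mu$, its second marginal is $\nu$ with support in $K$), and since $\mu,\nu$ are compactly supported, $d$ is bounded on $K\times K$, so $\int d\,d\gamma_0$, $\int d^2\,d\gamma_0$ and $W_1((\pi_2)_\sharp\gamma_0,\nu)=W_1(\nu,\nu)=0$ are all finite; also $\card(\spt (\pi_2)_\sharp\gamma_0)$ may be infinite, so instead one should choose $\gamma_0$ with finitely supported second marginal. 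Concretely, pick any finite set $F\subset K$ and a probability measure $\nu_0$ supported on $F$, and set $\gamma_0=\mu\otimes\nu_0$; then all four terms of $C_\e(\gamma_0)$ are finite, so $\min\{C_\e(\gamma);\gamma\in\Pi\}\le C_\e(\gamma_0)<+\infty$.

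For existence, I would take a minimizing sequence $(\gamma_k)\subset\Pi$ with $C_\e(\gamma_k)\to m:=\inf\{C_\e(\gamma);\gamma\in\Pi\}<+\infty$. All $\gamma_k$ have first marginal $\mu$ and second marginal supported in the fixed compact $K$; since $\mu\in\p_c(\hn)$, the family $\{\gamma_k\}$ is concentrated on the fixed compact set $\spt\mu\times K$, hence tight, so by Prokhorov's theorem a subsequence (not relabelled) converges weakly to some $\gamma\in\p(\hn\times\hn)$. The limit $\gamma$ again has first marginal $\mu$ (marginals pass to the weak limit) and second marginal supported in $K$ (since all $(\pi_2)_\sharp\gamma_k$ live on $K$ and weak limits preserve this), so $\gamma\in\Pi$.

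It then remains to show $C_\e(\gamma)\le m$, i.e. lower semicontinuity of $C_\e$ along the subsequence. The term $\int d\,d\gamma$ is lower semicontinuous under weak convergence because $d$ is continuous and nonnegative (approximate $d$ from below by bounded continuous functions), and likewise for $\int d^2\,d\gamma$. The Wasserstein term $\gamma\mapsto W_1((\pi_2)_\sharp\gamma,\nu)$ is continuous: $(\pi_2)_\sharp\gamma_k \rightharpoonup (\pi_2)_\sharp\gamma$ weakly, all these measures are supported in the compact $K$, and on a compact metric space weak convergence of probability measures is equivalent to convergence in $W_1$, so this term passes to the limit (in fact continuously). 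The genuinely delicate term is the counting functional $\gamma\mapsto\card(\spt(\pi_2)_\sharp\gamma)$, which I expect to be the main obstacle: I would prove it is lower semicontinuous by showing that if $(\pi_2)_\sharp\gamma$ had $N$ distinct points in its support, one could find $N$ disjoint open balls each carrying positive mass, and by weak convergence (portmanteau, applied to open sets) each such ball carries positive $(\pi_2)_\sharp\gamma_k$-mass for large $k$, forcing $\card(\spt(\pi_2)_\sharp\gamma_k)\ge N$ eventually; hence $\card(\spt(\pi_2)_\sharp\gamma)\le\liminf_k\card(\spt(\pi_2)_\sharp\gamma_k)$. Combining the four estimates gives $C_\e(\gamma)\le\liminf_k C_\e(\gamma_k)=m$, so $\gamma$ is a minimizer and \eqref{e:varapprox} admits a solution. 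Note this argument only needs $\card$ to be lower semicontinuous, which is why the counting term is placed with a positive sign in $C_\e$.
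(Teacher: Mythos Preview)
Your proposal is correct and follows essentially the same approach as the paper: finiteness via a plan with finitely atomic second marginal, then the direct method using weak compactness of $\Pi$ and lower semicontinuity of each term of $C_\e$. The only cosmetic difference is that for the counting term the paper invokes Kuratowski convergence of supports of weakly converging probability measures (citing \cite{ags}), whereas you give the underlying elementary argument directly via the portmanteau theorem on disjoint open balls; the content is the same.
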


\begin{proof}
First note that since $K$ is compact, $C_\e(\g)<+\infty$ for any $\g\in \Pi$ such that $(\pi_2)_\sharp\g$ is finitely atomic. Next the existence of solutions to \eqref{e:varapprox} follows from the weak compactness of $\Pi$, the lower semicontinuity of the three first terms to be minimized and the Kuratowski convergence of the supports of weakly converging probability measures (see \cite[Chapter~5]{ags}).
\end{proof}

Next, weak limits of solutions to \eqref{e:varapprox} are optimal transport plans that are solutions to the secondary variational problem introduced in Section~\ref{sect:optplanning} to which we refer for the definition of $\Pi_2(\mu,\nu)$. Modulo minor modifications due to our geometrical context, this can be proved with the same arguments as those given in \cite{sant}. 

\begin{lem} \label{optpi2}
 Let $\e_k$ be a sequence converging to 0 and $\g_{\e_k}$ a sequence of solutions to $(P_{\e_k})$ which is weakly converging to some $\g\in \p(\hn\times\hn)$. Then $\g \in \Pi_2(\mu,\nu)$.
\end{lem}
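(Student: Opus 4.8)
\textbf{Proof plan for Lemma~\ref{optpi2}.}

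The plan is to follow the standard scheme for $\Gamma$-convergence-type arguments in these variational approximations, adapting the computations of~\cite{sant} to our setting. First I would record what can be extracted from the fact that $\g_{\e_k}$ minimizes $C_{\e_k}$. Comparing $C_{\e_k}(\g_{\e_k})$ with $C_{\e_k}(\g_0)$ for a fixed competitor $\g_0$ that is induced by a transport map with finitely atomic second marginal (so that the last term is a fixed finite number), one obtains that $\frac{1}{\e_k} W_1((\pi_2)_\sharp\g_{\e_k},\nu)$ stays bounded by a constant depending on $\g_0$ but not on $k$; in particular $W_1((\pi_2)_\sharp\g_{\e_k},\nu)\to 0$, so by weak convergence of $\g_{\e_k}$ to $\g$ and continuity of the projection, $(\pi_2)_\sharp\g=\nu$, while $(\pi_1)_\sharp\g=\mu$ is immediate from $\g_{\e_k}\in\Pi$; hence $\g\in\Pi(\mu,\nu)$.

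Next I would prove that $\g$ is optimal for the primary problem, i.e.\ $\g\in\Pi_1(\mu,\nu)$. Fix any $\g_1\in\Pi_1(\mu,\nu)$ and approximate it by plans $\g_1^j\in\Pi$ whose second marginal is finitely atomic and which converge weakly to $\g_1$ with $\int d\,d\g_1^j\to\int d\,d\g_1$ and $W_1((\pi_2)_\sharp\g_1^j,\nu)\to 0$ fast enough; one then uses $\g_1^j$ (with a suitable choice $j=j(k)$) as competitor in $(P_{\e_k})$. Discarding the nonnegative Wasserstein and cardinality terms on the side of $\g_{\e_k}$, dividing through appropriately and using the boundedness of $d$ and of $\card$ along the competitor sequence together with $\e_k^{6n+8}\to 0$, one passes to the limit to get $\int d\,d\g\le\int d\,d\g_1=W_1(\mu,\nu)$, which forces $\int d\,d\g=W_1(\mu,\nu)$, i.e.\ $\g\in\Pi_1(\mu,\nu)$. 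Here the exponent $6n+8$ on the cardinality penalization is chosen precisely so that this term is negligible compared to the others along the chosen approximations; this is the point where the specific scaling matters and where the argument of~\cite{sant} has to be transcribed with care.

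Finally, having $\g\in\Pi_1(\mu,\nu)$, I would show it minimizes $\int d^2\,d\cdot$ over $\Pi_1(\mu,\nu)$. Take any $\g_2\in\Pi_1(\mu,\nu)$ and approximate it as above by admissible plans $\g_2^j\in\Pi$ with finitely atomic second marginals converging to $\g_2$ in the relevant quantities. Using $\g_2^j$ as competitor in $(P_{\e_k})$, subtracting the common order-one term $\int d\,d\g\approx\int d\,d\g_2^j\approx W_1(\mu,\nu)$ (controlling the error using the $\frac{1}{\e_k}W_1$ term, which is exactly what makes the first-order terms cancel to higher order), dividing by $\e_k$, and letting $k\to\infty$ along a diagonal choice $j=j(k)$, one arrives at $\int d^2\,d\g\le\int d^2\,d\g_2$. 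By lower semicontinuity of $\g\mapsto\int d^2\,d\g$ under weak convergence (recall $d$ is continuous and bounded on $K\times K$) this inequality is not lost in the limit on the left-hand side, and we conclude $\g\in\Pi_2(\mu,\nu)$.

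The main obstacle is the bookkeeping in the last step: one must build the approximating sequences $\g_2^j$ so that simultaneously $\int d\,d\g_2^j\to\int d\,d\g_2$, $\int d^2\,d\g_2^j\to\int d^2\,d\g_2$, $W_1((\pi_2)_\sharp\g_2^j,\nu)$ decays fast enough to be absorbed after division by $\e_k$, and $\e_k^{6n+8}\card((\spt(\pi_2)_\sharp\g_2^j))$ stays negligible after the same division; then one chooses $j=j(k)$ diagonally so that all error terms vanish in the limit. Since the minimality of $\g_{\e_k}$ only yields one inequality per competitor, the cancellation of the order-one Wasserstein and transport-cost contributions must be arranged quantitatively rather than just qualitatively, and this is precisely the technical heart carried over from~\cite{sant}.
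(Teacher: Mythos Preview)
Your overall three-step scheme ($\g\in\Pi(\mu,\nu)$, then $\Pi_1$, then $\Pi_2$, each via comparison with suitably discretized competitors) is exactly the paper's approach, and your description of the delicate cancellation in the $\Pi_2$ step is on target.

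There is, however, a genuine error in your first step. With a \emph{fixed} competitor $\g_0$ having finitely atomic second marginal, the bound $C_{\e_k}(\g_{\e_k})\le C_{\e_k}(\g_0)$ does \emph{not} give that $\tfrac{1}{\e_k}W_1((\pi_2)_\sharp\g_{\e_k},\nu)$ stays bounded: the right-hand side itself contains the term $\tfrac{1}{\e_k}W_1((\pi_2)_\sharp\g_0,\nu)$, and unless $\nu$ happens to be finitely atomic this is a strictly positive constant divided by $\e_k$, hence blows up. So from a fixed $\g_0$ you only get $W_1((\pi_2)_\sharp\g_{\e_k},\nu)\le W_1((\pi_2)_\sharp\g_0,\nu)+O(\e_k)$, which does not force $W_1((\pi_2)_\sharp\g_{\e_k},\nu)\to 0$.

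The fix is to do in step~1 what you already plan to do in steps~2 and~3: vary the competitor. The paper makes this quantitative by using the Ahlfors $(2n+2)$-regularity of $(\hn,d,\Leb)$: for each $m$ one covers $K$ by at most $Cm^{2n+2}$ balls of radius $1/m$, takes a Borel nearest-center projection $p_m$, and sets $\nu_m:=(p_m)_\sharp\nu$, $\g_m\in\Pi(\mu,\nu_m)$. Then $W_1(\nu_m,\nu)\le 1/m$ and $\card(\spt\nu_m)\le Cm^{2n+2}$, so $\e_k C_{\e_k}(\g_m)\le 1/m + O(\e_k)$; letting $\e_k\to0$ and then $m\to\infty$ gives $(\pi_2)_\sharp\g=\nu$. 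This same construction $\overline\g_m:=(\mathrm{Id},p_m)_\sharp\overline\g$ is what furnishes the competitors in your later steps, with $m\sim\e_k^{-2}$ for $\Pi_1$ and $m\sim\e_k^{-3}$ for $\Pi_2$; these choices are precisely what make the exponent $6n+8$ work. Note also that the paper never needs $\tfrac{1}{\e_k}W_1((\pi_2)_\sharp\g_{\e_k},\nu)$ bounded: in the $\Pi_2$ step the cancellation comes from combining $W_1(\mu,\nu)\le\int d\,d\g_{\e_k}+W_1((\pi_2)_\sharp\g_{\e_k},\nu)$ with $\int d\,d\overline\g_m\le W_1(\mu,\nu)+1/m$, and then using that $(1-\tfrac{1}{\e_k})W_1((\pi_2)_\sharp\g_{\e_k},\nu)\le 0$ for $\e_k\le 1$.
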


\begin{proof} First we note that for any $m\geq 1$, one can find a finite set $F_m\subset K$ such that $\card F_m \leq C\, m^{2n+2}$ for some constant $C>0$ which depends only on $n$ and $\diam K$ and a Borel map $p_m:K \rightarrow F_m$ such that
\begin{equation*}
 d(p_m(x),x) < 1/m  
\end{equation*}
for all $x\in K$. Indeed choose $x_1 \in K$. For $i\geq 2$, choose by induction $x_i \in K\setminus \cup_{j<i} B(x_j,1/m)$ as long as $K\setminus \cup_{j<i} B(x_j,1/m) \not=\emptyset$. Let $F_m$ denote the set of all these points. The balls $B(x_i,1/(2m))$ are mutually disjoint and $\cup_{i} B(x_i,(1/(2m)) \subset B(x_1,\diam K + 1)$. Remembering \eqref{e:measball}, it follows that
\begin{equation*}
\begin{split}
 c_n (2m)^{-2n-2} \card F&\leq \Leb(\cup_i B(x_i,1/(2m)))\\
&\leq \Leb(B(x_1,\diam K +1)) = c_n (\diam K + 1)^{2n+2}
\end{split}
\end{equation*}
for any finite subset $F\subset F_m$, hence $F_m$ is a finite set with $\card F_m \leq C\, m^{2n+2}$ where $C$ depends only on $n$ and $\diam K$. Next, by construction, for any $x \in K$, there exists a unique $x_i\in F_m$ such that $x \in B(x_i,1/m)\setminus \cup_{j<i} B(x_j,1/m)$ and we then set $p_m(x) := x_i$. 

The proof of the lemma can now be completed following the same arguments as those in \cite{sant}. For sake of completeness, we sketch these arguments below. Let $\g_{\e_k}$ and $\g$ be as in the statement. For $m\geq 1$, one sets $\nu_m:= (p_m)_\sharp\nu$. Note that by construction of $F_m$ and $p_m$, one has $\card{(\spt{\nu_m})} \leq  C\, m^{2n+2}$ and $W_1(\nu_m,\nu) \leq 1/m$. To check that $\g \in \Pi(\mu,\nu)$, one takes some $\g_m \in \Pi(\mu,\nu_m)\subset \Pi$ and uses the optimality of $\g_{\e_k}$ which implies 
\begin{equation*}
\begin{split}
 W_1((\pi_2)_\sharp\g_{\e_k},\nu) &\leq \e_k \, C_{\e_k}(\g_m) \\
&\leq \dfrac{1}{m} + \e_k \diam(K) + \e_k^2 \diam(K)^2 + C \e_k^{6n+9} m^{2n+2}.
\end{split}
\end{equation*}
Then one lets $\e_k \rightarrow 0$ with $m\geq 1$ fixed and then $m \rightarrow +\infty$ to get that $(\pi_2)_\sharp\g_{\e_k}$ converges weakly to $\nu$. Since it also converges weakly to $(\pi_2)_\sharp\g$, it follows that $\g \in \Pi(\mu,\nu)$. 

To check that $\g \in \Pi_1(\mu,\nu)$, one takes $\overline\g \in \Pi_1(\mu,\nu)$ and sets $\overline\g_m := (Id,p_m)_\sharp\overline\g \in \Pi(\mu,\nu_m)$ where $(Id,p_m)(x,y) = (x,p_m(y))$. By optimality of $\g_{\e_k}$, one has 
\begin{equation*}
\begin{split}
 \int_{\hn\times\hn} d(x,y)&\,d\g_{\e_k}(x,y) \leq  C_{\e_k}(\overline\g_m)\\
&\leq \dfrac{1}{m \,\e_k} + \int_{\hn\times\hn} d(x,y)\,d\overline\g_m(x,y) + \e_k \diam(K)^2 + C \e_k^{6n+8} m^{2n+2}.
\end{split}
\end{equation*}
Choosing $m$ of the order of $\e_k^{-2}$ and letting $\e_k \rightarrow 0$, one gets 
\begin{equation*}
 \int_{\hn\times\hn} d(x,y)\,d\g(x,y) \leq  \int_{\hn\times\hn} d(x,y)\,d\overline\g(x,y)
\end{equation*}
hence $\g \in \Pi_1(\mu,\nu)$. 

Finally, to check that $\g \in \Pi_2(\mu,\nu)$, one uses once again the optimality of $\g_{\e_k}$ in the following way,
\begin{multline*}
  W_1((\pi_2)_\sharp\g_{\e_k},\nu) + \e_k \int_{\hn\times\hn} d(x,y)\,d\g_{\e_k}(x,y) + \e_k^2 \int_{\hn\times\hn} d(x,y)^2\,d\g_{\e_k}(x,y) \\\leq \e_k \, C_{\e_k}(\overline\g_m) 
\end{multline*}
On the other hand, one has 
\begin{gather*}
\begin{split}
 W_1(\mu,\nu) &\leq W_1(\mu,(\pi_2)_\sharp\g_{\e_k}) + W_1((\pi_2)_\sharp\g_{\e_k},\nu)\\
& \leq \int_{\hn\times\hn} d(x,y)\,d\g_{\e_k}(x,y)  + W_1((\pi_2)_\sharp\g_{\e_k},\nu)
\end{split}\\
\intertext{and}
\begin{split}
\int_{\hn\times\hn} d(x,y)\,d\overline\g_m(x,y) &= \int_{\hn\times\hn} d(x,p_m(y))\,d\overline\g(x,y)\\
&\leq \int_{\hn\times\hn} d(x,y)\,d\overline\g(x,y) + \int_{\hn\times\hn} d(y,p_m(y))\,d\overline\g(x,y) \\
&\leq  W_1(\mu,\nu) + \frac{1}{m}.
\end{split}
\end{gather*}
It follows that 
\begin{multline*}
 \int_{\hn\times\hn} d(x,y)^2\,d\g_{\e_k}(x,y) \\ \leq \frac{1}{m\, \e_k^2} + \frac{1}{m\, \e_k} + \int_{\hn\times\hn} d(x,y)^2\,d\overline\g_m(x,y) + C \e_k^{6n+7} m^{2n+2}
\end{multline*}
provided $\e_k\leq 1$. Choosing $m$ of the order of $\e_k^{-3}$ and letting $\e_k \rightarrow 0$, one gets
\begin{equation*}
 \int_{\hn\times\hn} d(x,y)^2\,d\g(x,y) \leq \int_{\hn\times\hn} d(x,y)^2\,d\overline\g(x,y).
\end{equation*}
Since $\overline\g \in \Pi_1(\mu,\nu)$ was arbitrary, it follows that $\g \in \Pi_2(\mu,\nu)$.
\end{proof}
\medskip

The rest of this section is devoted to the study of the solutions to ($P_\e$). We fix $\e>0$ and set 
\begin{equation*}
 c_\e (x,y) := d(x,y) + \e\, d(x,y)^2.
\end{equation*}
We first recall the following classical fact.

\begin{lem} \label{restrictions}
Let $\g_\e$ be a solution to ($P_\e$). Then for any Borel set $U\subset\hn\times\hn$, $(\pi_2)_\sharp(\g_\e\lfloor U)$ is finitely atomic and  $\g_\e\lfloor U$ is a solution to Kantorovich transport problem \eqref{e:MK} between $(\pi_1)_\sharp (\g_\e\lfloor U)$ and $(\pi_2)_\sharp (\g_\e\lfloor U)$ with  cost $c_\e$.
\end{lem}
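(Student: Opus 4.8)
The plan is to exploit the additive structure of $C_\e$ under restriction and the fact that the only term of $C_\e$ that is sensitive to how mass is redistributed \emph{inside} a Borel set $U$ is the integral of $c_\e$ against $\g_\e\lfloor U$. First I would fix a solution $\g_\e$ to $(P_\e)$ and a Borel set $U\subset\hn\times\hn$, write $\g_\e = \g_\e\lfloor U + \g_\e\lfloor U^c$, and set $\mu_U := (\pi_1)_\sharp(\g_\e\lfloor U)$, $\nu_U := (\pi_2)_\sharp(\g_\e\lfloor U)$. The key observation is that if $\tilde\g$ is \emph{any} competitor in $\Pi(\mu_U,\nu_U)$, then $\tilde\g + \g_\e\lfloor U^c$ still lies in $\Pi$ (its first marginal is $\mu_U + (\pi_1)_\sharp(\g_\e\lfloor U^c) = \mu$, and its second marginal has support contained in $\spt\nu_U\cup\spt((\pi_2)_\sharp(\g_\e\lfloor U^c))\subset K$), so it is an admissible competitor for $(P_\e)$.

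Next I would compare $C_\e(\g_\e)$ with $C_\e(\tilde\g + \g_\e\lfloor U^c)$ term by term. The Wasserstein term $\tfrac1\e W_1((\pi_2)_\sharp\,\cdot\,,\nu)$ is unchanged because $(\pi_2)_\sharp(\tilde\g+\g_\e\lfloor U^c) = \nu_U + (\pi_2)_\sharp(\g_\e\lfloor U^c) = (\pi_2)_\sharp\g_\e$, since $\tilde\g$ and $\g_\e\lfloor U$ share the same second marginal $\nu_U$. For the same reason the cardinality term $\e^{6n+8}\card(\spt(\pi_2)_\sharp\,\cdot\,)$ is unchanged. The two integral terms $\int d\,d(\cdot) + \e\int d^2\,d(\cdot) = \int c_\e\,d(\cdot)$ split additively over the restriction to $U$ and to $U^c$, and the $U^c$-part is common to both, so the inequality $C_\e(\g_\e)\le C_\e(\tilde\g+\g_\e\lfloor U^c)$ reduces exactly to $\int_{\hn\times\hn} c_\e\,d(\g_\e\lfloor U) \le \int_{\hn\times\hn} c_\e\,d\tilde\g$. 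Since $\tilde\g\in\Pi(\mu_U,\nu_U)$ was arbitrary and $c_\e$ is continuous (indeed $c_\e(x,y) = d(x,y)+\e d(x,y)^2$ is continuous and nonnegative), this says precisely that $\g_\e\lfloor U$ is optimal for the Kantorovich problem \eqref{e:MK} between $\mu_U$ and $\nu_U$ with cost $c_\e$.

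It remains to argue that $\nu_U = (\pi_2)_\sharp(\g_\e\lfloor U)$ is finitely atomic. This follows from the $U=\hn\times\hn$ case together with the monotonicity of supports: $\spt\nu_U\subset\spt((\pi_2)_\sharp\g_\e)$, so it suffices to show $(\pi_2)_\sharp\g_\e$ is finitely atomic, i.e. has finite support. Here I would use the cardinality term: if $(\pi_2)_\sharp\g_\e$ had infinite support, then $\card(\spt(\pi_2)_\sharp\g_\e) = +\infty$ and hence $C_\e(\g_\e) = +\infty$, contradicting $C_\e(\g_\e) = \min\{C_\e(\g);\g\in\Pi\} < +\infty$ (the minimum is finite by the previous theorem, being attained at plans with finitely atomic second marginal). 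Thus $\spt(\pi_2)_\sharp\g_\e$ is finite, a fortiori $\spt\nu_U$ is finite, and $\nu_U$ is finitely atomic.

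The argument is essentially routine; the only point requiring a little care — and hence the ``main obstacle'' such as it is — is verifying that $\tilde\g + \g_\e\lfloor U^c$ is a \emph{legitimate} element of $\Pi$, namely checking that its first marginal is exactly $\mu$ (which uses that $\g_\e\lfloor U$ and $\tilde\g$ have the common first marginal $\mu_U$, together with $\mu_U + (\pi_1)_\sharp(\g_\e\lfloor U^c) = (\pi_1)_\sharp\g_\e = \mu$) and that the support of its second marginal stays inside $K$. Once this bookkeeping is in place, the term-by-term cancellation in $C_\e$ and the arbitrariness of $\tilde\g$ give the conclusion immediately.
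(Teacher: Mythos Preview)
Your proof is correct and follows essentially the same approach as the paper: both argue finite atomicity from $C_\e(\g_\e)<+\infty$, then replace $\g_\e\lfloor U$ by an arbitrary competitor $\tilde\g\in\Pi(\mu_U,\nu_U)$ inside $\g_\e$, observe that the resulting plan has the same second marginal as $\g_\e$ (so the $W_1$ and cardinality terms are unchanged), and deduce optimality of $\g_\e\lfloor U$ for the $c_\e$-Kantorovich problem from the additivity of $\int c_\e\,d(\cdot)$. Your write-up is simply more detailed than the paper's sketch.
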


\begin{proof}
 The fact that $(\pi_2)_\sharp(\g_\e\lfloor U)$ is finitely atomic obviously follows from the fact that $C_\e(\g_\e) = \min\{ C_\e(\g);\, \g \in \Pi\}<+\infty$. Next it is also immediate that $\g_\e$ is a solution to Kantorovich transport problem \eqref{e:MK} between $\mu$ and $(\pi_2)_\sharp (\g_\e)$ with cost $c_\e$. Then as a classical fact, the claim follows from the linearity of the functional to be minimized with respect to the transport plan. If $\g \in \Pi((\pi_1)_\sharp (\g_\e\lfloor U),(\pi_2)_\sharp(\g_\e\lfloor U))$, one indeed simply compare $C_\e(\g_\e)$ with $C_\e(\hat\g)$ where $ \hat\g = \g_\e\lfloor (\hn\times\hn)\setminus U + \g \in \Pi(\mu,(\pi_2)_\sharp(\g_\e))$ to get the conclusion.
\end{proof}

Next in this section we consider interpolations between two measures $\overline\mu$, $\overline \nu\in \p_c(\hn)$ that are constructed from a transport plan solution to Kantorovich transport problem \eqref{e:MK} between these two measures with cost $c_\e$. We prove absolute continuity and, more importantly, $L^\infty$-estimates on the density with respect to $\Leb$ of these interpolations whenever $\overline\mu \ll \Leb$ and $\overline \nu$ is finitely atomic, see Proposition~\ref{e:Linftydensityestimates}. We divide the arguments into several steps. First we prove that any solution to this Kantorovich transport problem is induced by a transport.

\begin{thm}  \label{existenceceps}
 Let $\overline\mu$, $\overline \nu\in \p_c(\hn)$ be fixed. Assume that $\overline\mu \ll \Leb$ and that $\overline \nu$ is finitely atomic. Then any solution to Kantorovich transport problem \eqref{e:MK} between $\overline\mu$ and $\overline \nu$ with  cost $c_\e$ is induced by a transport. In particular there exists a unique optimal transport map solution to the transport problem \eqref{e:M} between $\overline\mu$ and $\overline \nu$ with cost $c_\e$.
\end{thm}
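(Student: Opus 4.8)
The plan is to deduce that any optimal plan $\overline\g$ for the cost $c_\e$ between $\overline\mu$ and $\overline\nu$ is concentrated on a $\overline\g$-measurable graph, which by the last remark of Subsection~\emph{Transport problem} forces $\overline\g$ to be induced by a transport; since $\overline\mu\ll\Leb$, uniqueness of the transport map then follows from Theorem~\ref{plan-transport}(ii) once we know \emph{every} optimal plan is induced by a transport. So the whole point is to produce the graph structure. First I would apply the duality and cyclical monotonicity results of Section~\ref{opttrans} to the cost $c_\e(x,y)=d(x,y)+\e\,d(x,y)^2$, which is a continuous real-valued cost bounded on $\spt\overline\mu\times\spt\overline\nu$ (both supports being compact), hence satisfies the hypotheses of Theorem~\ref{duality} and Theorem~\ref{ccycl}: there is a $c_\e$-concave Kantorovich potential $\psi$, and any optimal $\overline\g$ is concentrated on the set
\begin{equation*}
\G_\e:=\{(x,y)\in\hn\times\hn;\ \psi(x)+\psi^{c_\e}(y)=c_\e(x,y)\}.
\end{equation*}

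Next I would exploit that $\overline\nu$ is finitely atomic, say $\overline\nu=\sum_{k=1}^N a_k\,\delta_{y_k}$. Then $\G_\e\subset\hn\times\{y_1,\dots,y_N\}$, so the plan decomposes as $\overline\g=\sum_k \overline\g\lfloor(\hn\times\{y_k\})$, and it suffices to show that for each fixed target point $y_k$ the set of $x$ that are \emph{shared} between two different targets is $\overline\mu$-negligible; equivalently that for $\overline\mu$-a.e.\ $x$ the index $k$ with $(x,y_k)\in\G_\e$ is unique. Fix $j\ne k$. On the set $A_{jk}$ of points $x$ with $(x,y_j),(x,y_k)\in\G_\e$, $c_\e$-concavity gives
\begin{equation*}
\psi(x)=\min_{1\le l\le N}\bigl(c_\e(x,y_l)-\psi^{c_\e}(y_l)\bigr),
\end{equation*}
so the two smooth-in-$x$ functions $x\mapsto c_\e(x,y_j)-\psi^{c_\e}(y_j)$ and $x\mapsto c_\e(x,y_k)-\psi^{c_\e}(y_k)$ agree and are both $\le$ all the others on $A_{jk}$. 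The key geometric input is that, away from $L_{y_j}\cup L_{y_k}$ (a $\Leb$-null set by Lemma~\ref{prop-distcc}, since $L_{y}$ is a line), the maps $x\mapsto c_\e(x,y_l)$ are $C^\infty$ and one can compare their gradients: I would show that the set $A_{jk}$ is contained, up to a $\Leb$-null set, in the zero set of a nontrivial smooth function, namely where $c_\e(\cdot,y_j)-c_\e(\cdot,y_k)$ vanishes \emph{and} where its two sides are simultaneously minimal; combined with a differentiability-of-the-minimum argument this forces $\nabla c_\e(x,y_j)=\nabla c_\e(x,y_k)$ at $\overline\mu$-a.e.\ (equivalently $\Leb$-a.e., since $\overline\mu\ll\Leb$) point of $A_{jk}$. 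But $\nabla_x c_\e(x,y)=(1+2\e\,d(x,y))\nabla d_y(x)$, so equality of gradients together with the already-known equality $c_\e(x,y_j)=c_\e(x,y_k)$ (hence $d(x,y_j)=d(x,y_k)$, because $t\mapsto t+\e t^2$ is strictly increasing) yields $\nabla d_{y_j}(x)=\nabla d_{y_k}(x)$ and $d(x,y_j)=d(x,y_k)$, and Lemma~\ref{prop-distcc}(ii) then gives $y_j=y_k$, a contradiction. Hence $\Leb(A_{jk})=0$, so $\overline\mu(A_{jk})=0$, and taking the union over the finitely many pairs $j\ne k$ we conclude that $\G_\e$ is, up to a $\overline\g$-null set, a graph; thus $\overline\g$ is induced by a transport map.

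The main obstacle is making the gradient-comparison argument on $A_{jk}$ rigorous: $A_{jk}$ need not be open, so one cannot naively differentiate the relations $\psi(x)=c_\e(x,y_j)-\psi^{c_\e}(y_j)=c_\e(x,y_k)-\psi^{c_\e}(y_k)$ inside it. The clean way is to argue at density points: at a point $x_0$ of density one of $A_{jk}$ with $x_0\notin L_{y_j}\cup L_{y_k}$ where moreover $\psi$ is (approximately) differentiable, one uses that $\psi\le c_\e(\cdot,y_l)-\psi^{c_\e}(y_l)$ everywhere with equality at $x_0$ for $l=j,k$, so both smooth functions are supported from below by $\psi$ at $x_0$ and coincide with it there; a standard touching argument then forces their classical gradients at $x_0$ to coincide. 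Since $\Leb$-a.e.\ point of $A_{jk}$ is such a density point (using Lebesgue's density theorem in the doubling space $(\hn,d,\Leb)$, or simply in $\R^{2n+1}$, and Pansu--Rademacher, Theorem~\ref{PansuRademacher}, for differentiability of the Lipschitz function $\psi$), the conclusion $\nabla c_\e(x,y_j)=\nabla c_\e(x,y_k)$ $\Leb$-a.e.\ on $A_{jk}$ follows, and the rest is as above. Everything else — existence and $c_\e$-concavity of $\psi$, the reduction to graphs, the passage from "induced by a transport" to existence and uniqueness of the optimal map — is quoted directly from Section~\ref{opttrans}.
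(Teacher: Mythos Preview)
Your overall architecture is the same as the paper's --- take a $c_\e$-potential $\psi$, observe that an optimal plan is concentrated on the contact set, and show that for $\Leb$-a.e.\ $x$ at most one atom $y_k$ can be in contact --- but there is a genuine error in the step where you pass from gradient equality to $d(x,y_j)=d(x,y_k)$. You assert an ``already-known equality $c_\e(x,y_j)=c_\e(x,y_k)$'', but membership in $A_{jk}$ only gives
\[
c_\e(x,y_j)-\psi^{c_\e}(y_j)=\psi(x)=c_\e(x,y_k)-\psi^{c_\e}(y_k),
\]
i.e.\ $c_\e(x,y_j)-c_\e(x,y_k)$ equals the constant $\psi^{c_\e}(y_j)-\psi^{c_\e}(y_k)$, which has no reason to vanish. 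So your deduction of $d(x,y_j)=d(x,y_k)$ from strict monotonicity of $t\mapsto t+\e t^2$ is unfounded, and without it you cannot cancel the factors $1+2\e\,d(x,y_l)$ to get $\nabla d_{y_j}(x)=\nabla d_{y_k}(x)$.

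The paper repairs and simplifies this in one stroke. It sets $h_{jk}(x)=c_\e(x,y_j)-c_\e(x,y_k)+\psi^{c_\e}(y_k)-\psi^{c_\e}(y_j)$, a $C^\infty$ function on $\hn\setminus(L_{y_j}\cup L_{y_k})$, and shows that $\nabla h_{jk}\ne 0$ there: if $\nabla h_{jk}(x)=0$, then in particular $(1+2\e\,d_{y_j}(x))\nabla_H d_{y_j}(x)=(1+2\e\,d_{y_k}(x))\nabla_H d_{y_k}(x)$, and now Lemma~\ref{prop-distcc}(i) gives $|\nabla_H d_{y_j}(x)|=|\nabla_H d_{y_k}(x)|=1$, so taking norms yields $d_{y_j}(x)=d_{y_k}(x)$ directly --- this is the missing idea. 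The rest then forces $\nabla d_{y_j}(x)=\nabla d_{y_k}(x)$ and hence $y_j=y_k$ by Lemma~\ref{prop-distcc}(ii). Since $\nabla h_{jk}$ never vanishes, $\{h_{jk}=0\}$ is a $2n$-dimensional $C^\infty$ submanifold and thus $\Leb$-null; as $A_{jk}\subset\{h_{jk}=0\}\cup L_{y_j}\cup L_{y_k}$, you are done. Note this bypasses entirely your density-point/touching machinery and the appeal to differentiability of $\psi$ (where, incidentally, Pansu--Rademacher would only give you horizontal derivatives, not the full $\nabla$ you need).
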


\begin{proof} Let $\psi$ be a Kantorovich potential for Kantorovich transport problem \eqref{e:MK} between $\overline\mu$ and $\overline \nu$ with  cost $c_\e$ given by Theorem~\ref{duality}. Let $\{y_i\}_{i=1}^k$ denote the atoms of $\overline \nu$. We prove that for $\Leb$-a.e. $x\in\hn$, there is at most one point $y_i$ for some $i\in\{1,\dots,k\}$ such that
\begin{equation*} 
 \psi(x) + \psi^c(y_i) = c_\e (x,y_i).
\end{equation*}
Since $\overline\mu \ll \Leb$, it will follow that any transport plan solution to Kantorovich transport problem \eqref{e:MK} between $\overline\mu$ and $\overline \nu$ with  cost $c_\e$ is concentrated on a $\overline\mu$-measurable graph and hence induced by a transport. This implies in turn existence and uniqueness of the optimal transport map solution to the transport problem \eqref{e:M} between $\overline\mu$ and $\overline \nu$ with cost $c_\e$ (see Theorem~\ref{plan-transport}). 

For $i\not= j$, set $h_{ij}(x) := c_\e (x,y_i) - c_\e (x,y_j) + \psi^c(y_j) - \psi^c(y_i)$. It follows from Lemma~\ref{prop-distcc} that $h_{ij}$ is of class $C^\infty$ on the open set $\hn\setminus(L_{y_i} \cup L_{y_j})$ with $\nabla h_{ij} \not= 0$. Indeed assume on the contrary that $\nabla h_{ij}(x) =0$ for some $x\in \hn\setminus(L_{y_i} \cup L_{y_j})$. Then, differentiating along the horizontal vector fields $X_j$ and $Y_j$, we would have
\begin{equation*} 
 \nabla_H d_{y_i} (x) \,(1+2\e\, d_{y_i}(x)) =  \nabla_H d_{y_j} (x) \,(1+2\e \,d_{y_j}(x)).
\end{equation*}
Since $|\nabla_H d_{y_i} (x) | = |\nabla_H d_{y_j} (x) |$ (see Lemma~\ref{prop-distcc}(i)), this would imply that $d_{y_i}(x) = d_{y_j}(x)$ and in turn that $\nabla_H d_{y_i} (x) = \nabla_H d_{y_j} (x)$. Since we also have by assumption $\partial_t d_{y_i} (x) = \partial_t d_{y_j} (x)$, Lemma~\ref{prop-distcc}(ii) would give $y_i=y_j$. It follows that the set $\{x\in\hn\setminus(L_{y_i} \cup L_{y_j});\, h_{ij}(x) =0\}$ is a $C^\infty$-smooth submanifold of dimension $2n$ in $\R^{2n+1}$ and hence has Lebesgue measure 0. Since $\Leb(L_{y_i}) =0$, it follows that 
\begin{equation*} 
 \Leb (\bigcup_{i\not=j} \{x\in\hn;\,h_{ij}(x) =0\})  = 0
\end{equation*}
and the claim follows.
\end{proof}

If $T:\hn \rightarrow \hn$, we set $T_t = e_t \circ S \circ (I \otimes T)$, i.e., $T_t(x)$ is the point lying at distance $t \, d(x,T(x))$ from $x$ on the selected minimal curve $S(x,T(x))$ between $x$ and $T(x)$ (see Subsection~\ref{sect-interpolation} for the definition of $S$ and $e_t$).

\begin{prop} \label{injectivity} \cite[Chapter 7]{villani}
 Let $\overline\mu$, $\overline \nu\in \p_c(\hn)$ be fixed such that $\overline\mu \ll \Leb$ and $\overline \nu$ is finitely atomic. Let $T^\e$ be the optimal transport map solution to the transport problem \eqref{e:M} between $\overline\mu$ and $\overline \nu$ with cost $c_\e$. Then there exists a $\overline\mu$ - measurable set $A$ such that $\overline\mu(A) = 1$ and such that for each $t\in\, [0,1)$, $T^\e_t\lfloor_A$ is injective.
\end{prop}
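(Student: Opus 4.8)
The plan is to prove injectivity of $T^\e_t\lfloor_A$ by contradiction, using the $c_\e$-cyclical monotonicity of the graph of $T^\e$ together with the uniqueness and non-branching properties of minimal curves in $\hn$. First I would fix a Borel set $A$ with $\overline\mu(A)=1$ on which $(I\otimes T^\e)_\sharp\overline\mu$ is concentrated and which is $c_\e$-cyclically monotone (Theorem~\ref{ccycl}), and on which moreover $x^{-1}\cdot T^\e(x)\notin L$ whenever $x\neq T^\e(x)$; this last restriction costs nothing since, arguing as in Lemma~\ref{pi1.1}, any optimal plan for the cost $c_\e$ is concentrated on $\Om$ (the Kantorovich potential $\psi$ for $c_\e$ is locally Lipschitz away from the atoms, so it is Pansu-differentiable $\Leb$-a.e., and a differentiability argument in the spirit of Lemma~\ref{uniquegeod} forces the connecting minimal curve to be unique). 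On $\Om$ the selected minimal curve $S(x,T^\e(x))$ is \emph{the} unique minimal curve, so $T^\e_t(x)$ is unambiguously the point at distance $t\,d(x,T^\e(x))$ from $x$ along it.

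Next, suppose $t\in[0,1)$ and $x,x'\in A$ with $x\neq x'$ but $T^\e_t(x)=T^\e_t(x')=:z$. Write $y=T^\e(x)$, $y'=T^\e(x')$. The two minimal curves $S(x,y)$ and $S(x',y')$ both pass through $z$, and since $t<1$ the point $z$ is an interior point (for $t>0$) or an endpoint with a nontrivial forward arc; in either case one can build from $z$ a minimal curve running to $y$ and another running to $y'$. The key step is to show these two forward arcs coincide on a nontrivial interval, so that the non-branching property (Proposition~\ref{nonbranching}) forces $y=y'$ or at least puts all four points $x,x',y,y'$ on one common minimal curve; then a one-dimensional comparison using $c_\e$-cyclical monotonicity of $\{(x,y),(x',y')\}$ will yield the contradiction. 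Concretely, I would apply cyclical monotonicity in the form $c_\e(x,y)+c_\e(x',y')\le c_\e(x,y')+c_\e(x',y)$, substitute the additivity of $d$ along the common minimal curve through $z$ (so each of $d(x,y)$, $d(x',y')$, $d(x,y')$, $d(x',y)$ becomes a sum of signed sub-arc lengths measured from $z$), and expand $c_\e = d + \e d^2$. The linear ($d$) part cancels and the quadratic part produces a strictly negative cross-term of the shape $2\e(a-a')(b-b')$ unless $a=a'$ (where $a=d(z,x)$, $a'=d(z,x')$), i.e. unless $x=x'$ since both lie on the same curve on the same side of $z$; this is exactly the computation already carried out in the proof of Lemma~\ref{pi2}, adapted to the strictly convex cost $c_\e$.

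The main obstacle I expect is the geometric bookkeeping around the branch point $z$: one must be careful that when $t=0$ the statement is about injectivity of $T^\e_0=\mathrm{id}$, which is trivial, and for $t\in(0,1)$ one must check that the concatenation of the backward arc from $z$ to $x$ (resp. $x'$) with any minimal curve from $x$ (resp. $x'$) — or more usefully the two sub-arcs of $S(x,y)$ and $S(x',y')$ emanating from $z$ toward $y$ and $y'$ — actually overlap on a genuine interval so that non-branching applies. The cleanest way is: the sub-arc of $S(x,y)$ from $z$ to $y$ and the sub-arc of $S(x',y')$ from $z$ to $y'$ are two minimal curves starting at $z$; if they do not share an initial segment, then $z$ is a branch point, which cannot happen for two minimal curves issuing from the same point in a non-branching space only if they are genuinely distinct curves — so instead I invoke that $z$ has well-defined ``forward direction'' along $S(x,y)$ and along $S(x',y')$, and by the explicit description in Theorem~\ref{geod} these determine the curve uniquely, forcing the two forward arcs to coincide until one terminates. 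Once this overlap is established everything reduces to the already-verified strict-convexity computation, so no further serious difficulty remains. Alternatively, since this proposition is attributed to \cite[Chapter 7]{villani}, one may simply cite the abstract version there, the point being that $c_\e$ is strictly convex in the distance and $\hn$ is a non-branching geodesic space; but the self-contained argument above is short enough to include.
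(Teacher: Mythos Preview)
Your overall strategy (cyclical monotonicity plus non-branching) is right and matches the paper's, but you apply the two ingredients in the wrong order, and this creates a real gap. You want to \emph{first} establish that the forward sub-arcs of $S(x,y)$ and $S(x',y')$ issuing from $z$ overlap on a nontrivial interval, and \emph{then} run the one-dimensional computation. Your justification for this overlap misreads non-branching: Proposition~\ref{nonbranching} says that two minimal curves which \emph{already} agree on a nontrivial interval must agree throughout; it does \emph{not} say that two minimal curves issuing from the same point share an initial segment. Many geodesics pass through $z$ in many directions, and neither non-branching nor the explicit formulae of Theorem~\ref{geod} force the two forward directions at $z$ to coincide a priori. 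So your ``geometric step'' is unjustified, and the subsequent computation (which assumes additivity of $d$ along a common curve) does not get off the ground.

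The paper reverses the order. It introduces the intermediate-time cost $c_\e^{s,t}(x,y)=d(x,y)+\e\,d(x,y)^2/(t-s)$ and the inequality $c_\e(x,w)\le c_\e^{0,t}(x,z)+c_\e^{t,1}(z,w)$, with equality iff the concatenation $x\to z\to w$ is a minimal curve. Since $z=T^\e_t(x)=T^\e_t(\tilde x)$, this inequality is an equality for $w=T^\e(x)$ and (with $x$ replaced by $\tilde x$) for $w=T^\e(\tilde x)$; chaining with cyclical monotonicity forces equality in the two cross terms as well, and it is \emph{this} equality case that delivers the geometric fact that the concatenated curve $x\to z\to T^\e(\tilde x)$ is minimal. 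Only then does non-branching apply, yielding $x=\tilde x$. Your argument can be repaired in the same spirit: bound $c_\e(x,y')$ via the triangle inequality $d(x,y')\le a+b'$ (not equality), combine with cyclical monotonicity to get $(a-a')(b-b')\le 0$, and then use the interpolation constraint $b/a=b'/a'=(1-t)/t$ (which you omit) to get $(a-a')(b-b')=(a-a')^2(1-t)/t\ge 0$; this forces $a=a'$ together with equality in the triangle inequalities, from which the common minimal curve and $x=x'$ follow a posteriori via non-branching.
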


The cost $c_\e$ can be recovered as coming from a so-called coercive Lagrangian action. Since $(\hn,d)$ is non-branching, the proposition essentially follows from~\cite[Chapter 7, Theorem 7.30]{villani}. However one does not need the full strength of the theory developed in~\cite[Chapter 7]{villani} to get the conclusion of Proposition~\eqref{injectivity} and we sketch below the arguments for the reader's convenience. 

\begin{proof} Let $0\leq s<t \leq 1$ and $x$, $y\in \hn$. Set 
\begin{equation*} 
c_\e^{s,t} (x,y) = d(x,y) + \e \, \dfrac{d(x,y)^2}{t-s}.
\end{equation*}
The space $(\hn,d)$ being a geodesic space and $u \mapsto u + \e\,u^2$ being strictly increasing and strictly convex on $[0,+\infty )$, one has 
\begin{equation*} 
 c_\e (x,y) \leq c_\e^{0,t} (x,z) + c_\e^{t,1} (z,y)
\end{equation*}
for all $x$, $y$, $z\in \hn$ and $t\in (0,1)$, with equality if and only if any curve in $C([0,1],\hn)$ obtained by concatenation of a minimal curve $\s_{x,z}:[0,t]\rightarrow\hn$ between $x$ and $z$ and a minimal curve $\s_{z,y}:[t,1]\rightarrow\hn$ between $z$ and $y$ is a minimal curve between $x$ and $y$.

On the other hand, by $c_\e$-cyclical monotonicity, one knows that there exists a $\overline\mu$ - measurable set $A$ such that $\overline\mu(A) =1$ and 
\begin{equation*} 
 c_\e(x,T^\e(x)) + c_\e(\tilde x,T^\e(\tilde x)) \leq c_\e(x,T^\e(\tilde x)) + c_\e(\tilde x,T^\e(x))
\end{equation*}
for all $x$, $\tilde x \in A$ (see Theorem~\ref{ccycl}).

Now let $t\in (0,1)$ be fixed and let $x$, $\tilde x \in A$. Assume that $T^\e_t(x) = T^\e_t(\tilde x)$. Then 
\begin{equation} 
 c_\e(x,T^\e(\tilde x)) \leq c_\e^{0,t} (x,z) + c_\e^{t,1} (z,T^\e(\tilde x)),\label{e:injectivity}
\end{equation}
and similarly,
\begin{equation*} 
c_\e(\tilde x,T^\e(x)) \leq c_\e^{0,t} (\tilde x,z) + c_\e^{t,1} (z,T^\e(x))
\end{equation*}
where $z = T^\e_t(x) = T^\e_t(\tilde x)$. It follows that 
\begin{equation*} 
\begin{split}
c_\e(x,& T^\e(x)) + c_\e(\tilde x,T^\e(\tilde x))\\ &\leq c_\e(x,T^\e(\tilde x)) + c_\e(\tilde x,T^\e(x))\\
&\leq c_\e^{0,t} (x,T^\e_t(x)) + c_\e^{t,1} (T^\e_t(x),T^\e(x)) + c_\e^{0,t} (\tilde x,T^\e_t(\tilde x)) + c_\e^{t,1} (T^\e_t(\tilde x),T^\e(\tilde x))\\
&= c_\e(x,T^\e(x)) + c_\e(\tilde x,T^\e(\tilde x)).
\end{split}
\end{equation*}
Hence equality has to hold in all these inequalities. In particular equality holds in \eqref{e:injectivity}. It follows that the curve obtained by concatenation of the minimal curve $s\in [0,t] \mapsto e_s(S(x,T^\e(x)))$ between $x$ and $z$ with the minimal curve $s \in [t,1] \mapsto e_s(S(\tilde x,T^\e(\tilde x)))$ between $z$ and $T^\e(\tilde x)$ is a minimal curve. Since this curve coincide with the minimal curve $\s: s\in [0,1] \mapsto e_s(S(x,T^\e(x)))$ on the non trivial interval $[0,t]$ and $\hn$ is non-branching (see Proposition~\ref{nonbranching}), we get that it coincides with $\s$ on the whole interval $[0,1]$. Similarly, it coincides with the minimal curve $\tilde \s: s\in [0,1] \mapsto e_s(S(\tilde x,T^\e(\tilde x)))$ on the whole interval $[0,1]$. Hence $\s = \tilde \s$ and in particular $x=\s(0) = \tilde \s (0) = \tilde x$.
\end{proof}

We turn now to the main estimate that will lead to Proposition~\ref{e:Linftydensityestimates}.

\begin{prop} \label{estimsup}
 Let $\overline\mu \in \p(\hn)$, $\overline\mu \ll \Leb$, $\overline\mu = \rho \,d\Leb$, and $T:\hn\rightarrow\hn$ be a $\overline\mu$ - measurable map such that $T_\sharp\overline\mu$ is finitely atomic. Let $t\in\,(0,1)$ and set $\overline\mu_t := T_{t\,\sharp}\overline\mu$. Assume that there exists a $\overline\mu$ - measurable set $A$ such that $\overline\mu(A) = 1$ and $T_t\lfloor_A$ is injective. Then $\overline\mu_t \ll \Leb$, $\overline\mu_t = \rho_t \,d\Leb$ with 
\begin{equation*}
 \rho_t \leq \frac{\1_{T_t(A)}}{(1-t)^{2n+3}} \,\rho \circ T_t^{-1}\lfloor_{T_t(A)} \qquad \Leb-a.e.
\end{equation*}
\end{prop}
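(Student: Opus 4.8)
The plan is to obtain the $L^\infty$-estimate on $\rho_t$ by a change-of-variables argument based on the Measure Contraction Property of $\hn$. First I would reduce to a pushforward statement: since $T_t\lfloor_A$ is injective and $\overline\mu(A)=1$, the measure $\overline\mu_t=(T_t)_\sharp\overline\mu$ is concentrated on $T_t(A)$ and equals $(T_t\lfloor_A)_\sharp(\overline\mu\lfloor_A)$. So for any Borel set $B\subset\hn$ one has $\overline\mu_t(B)=\overline\mu(A\cap T_t^{-1}(B))=\int_{A\cap T_t^{-1}(B)}\rho\,d\Leb$. To turn this into a density bound with respect to $\Leb$, the heart of the matter is to control $\Leb$ of sets of the form $A\cap T_t^{-1}(E)$ in terms of $\Leb(E)$, for $E\subset T_t(A)$; equivalently, to show that the map $T_t^{-1}\lfloor_{T_t(A)}$ does not contract $\Leb$-volume too much, i.e.\ pushes forward $\Leb\lfloor_{T_t(A)}$ to a measure dominated by $(1-t)^{-(2n+3)}\Leb$.

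The mechanism is the following. Fix $x_0$ a point where $T$ is defined and set $z_0=T(x_0)$, $w_0=T_t(x_0)=e_t(S(x_0,z_0))$. By Lemma~\ref{prop-distcc} and Theorem~\ref{geod}, away from the center-lines the minimal curve from $x_0$ to $z_0$ depends smoothly on its endpoints, and the geodesic interpolation map that sends a point $y$ near $z_0$ to the point at parameter $t$ along the minimal curve from $x_0$ to $y$ is a $C^\infty$-diffeomorphism on the relevant open set. The key geometric input is a Jacobian/measure-contraction estimate: running the geodesic backward from the interpolation point $w_0$ toward the endpoint $z_0$ — i.e.\ the map $w\mapsto$ (endpoint of the geodesic from $x_0$ through $w$) — expands $\Leb$-volume by at most the factor $(1-t)^{-(2n+3)}$, because $\Leb$ is $(2n+2)$-homogeneous under the dilations $\delta_r$ and the geodesic structure of $\hn$ contracts toward a point at rate governed by these dilations, with one extra power coming from the interplay between the dilation exponent and the radial direction along the geodesic (this is exactly the Measure Contraction Property $MCP(0,2n+3)$ of $\hn$). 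Concretely one localizes: partition $A$ (up to a $\overline\mu$-null set) into countably many pieces on which $x_0$, $z_0=T(x_0)$ and the corresponding geodesics stay in a region where everything is smooth and the diffeomorphism bounds are uniform, apply the change of variables $z=\Psi(w)$ on each piece where $\Psi$ is the ``push the interpolation point to the endpoint'' map, use $|\det D\Psi|\le (1-t)^{-(2n+3)}$, and sum.

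Carrying this out in coordinates: write $\overline\mu_t(B)=\int_{T_t(A)\cap B}\big(\rho\circ T_t^{-1}\big)(w)\,J(w)\,d\Leb(w)$ where $J(w)$ is the Jacobian of the change of variables from $x=T_t^{-1}(w)$-slices to $w$; the point is that $J(w)\le (1-t)^{-(2n+3)}$, which yields $\rho_t(w)\le (1-t)^{-(2n+3)}\,\1_{T_t(A)}(w)\,\rho\circ T_t^{-1}(w)$ for $\Leb$-a.e.\ $w$, as claimed. I would need to check that $T_t^{-1}\lfloor_{T_t(A)}$ is $\overline\mu_t$-measurable (it is the inverse of an injective Borel-measurable map on a measurable set, so by classical descriptive set theory its graph is measurable) so that the composition $\rho\circ T_t^{-1}$ makes sense.

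The main obstacle is establishing the uniform volume-distortion bound $(1-t)^{-(2n+3)}$ for the geodesic interpolation map, i.e.\ making the Measure Contraction Property quantitative and uniform after the localization. One must verify that the exceptional center-lines $L_{z_0}$ contribute nothing (they are $\Leb$-null, and by Lemma~\ref{pi1.1}-type arguments the plan avoids them), and that along each localized piece the Jacobian of $\Psi$ can be bounded by $(1-t)^{-(2n+3)}$ independently of the base point $x_0$ — this requires the explicit form of minimal curves from Theorem~\ref{geod} together with the homogeneity $\Leb(\delta_r A)=r^{2n+2}\Leb(A)$, the extra power $2n+3$ rather than $2n+2$ reflecting the sub-Riemannian (non-Riemannian) nature of $\hn$. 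Everything else — the reduction to a pushforward, the injectivity on $A$ supplied by Proposition~\ref{injectivity}, the measurability bookkeeping, and the summation over the countable partition — is routine.
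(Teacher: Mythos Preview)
You have correctly identified the target estimate --- that $T_t^{-1}\lfloor_{T_t(A)}$ pushes $\Leb\lfloor_{T_t(A)}$ forward to a measure dominated by $(1-t)^{-(2n+3)}\Leb$ --- but the mechanism you describe for proving it is backwards, and this is a genuine gap.

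In your ``mechanism'' paragraph you fix the \emph{starting} point $x_0$ and study the map that sends an interpolation point $w$ to the endpoint of the geodesic from $x_0$ through $w$. This is not the relevant map. In the present situation the starting points form the continuum carrying the density $\rho$, while the endpoints are the finitely many atoms $\{y_i\}$ of $T_\sharp\overline\mu$. The map $T_t^{-1}$ sends an interpolation point back to its \emph{starting} point, with the \emph{endpoint} held fixed (equal to some $y_i$). Your description of varying a point ``$y$ near $z_0$'' with $x_0$ fixed never occurs: on each fibre $T^{-1}(\{y_i\})$ the target $z_0=y_i$ is constant, and it is $x$ that varies. Consequently the diffeomorphism/Jacobian picture you sketch, and the ``countably many pieces where everything is smooth'' localization, are aimed at the wrong map and do not use the finite-atomic hypothesis in the way it is meant to be used.

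The paper's argument exploits exactly this hypothesis: one partitions $A$ into the \emph{finitely} many pieces $A_i=A\cap T^{-1}(\{y_i\})$. On each $A_i$ one has $T_t(x)=(e_t\circ S)(x,y_i)$ with $y_i$ fixed, and the Measure Contraction Property of $\hn$ (from \cite{juillet}) is applied directly as the set inequality
\[
\Leb(E)\le(1-t)^{-(2n+3)}\,\Leb\big((e_t\circ S)(E,y_i)\big)=(1-t)^{-(2n+3)}\,\Leb(T_t(E)),\qquad E\subset A_i,
\]
with no Jacobian computation, no smoothness localization, and no descriptive-set-theoretic measurability issues. Injectivity of $T_t$ on $\hat A=\bigcup_i A_i$ then lets one sum the finitely many pieces disjointly to obtain $\Leb(T_t^{-1}(F)\cap\hat A)\le(1-t)^{-(2n+3)}\Leb(F)$, from which absolute continuity and the pointwise density bound follow by the change-of-variables argument you outline at the end. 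So the skeleton of your reduction is right; what needs to be replaced is the middle paragraph, with the MCP applied toward the fixed atom $y_i$ rather than away from a fixed $x_0$.
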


Arguments for the proof of this proposition can be found in \cite[Section~3]{figjuil} even-though not explicitly stated in the same way in that paper. They rely on the following estimate:
\begin{equation*}
 \Leb(E) \leq \frac{1}{(1-t)^{2n+3}} \,\Leb((e_t \circ S)(E,y))
\end{equation*}
for any $y\in\hn$ and $E\subset \hn$ which is proved in \cite[Section~2]{juillet} and which roughly means that $(\hn,d,\Leb)$ satisfies a  so-called Measure Contraction Property. We detail the proof below for the reader's convenience.

\begin{proof} 
 Let $\{y_i\}_{i=1}^k$ denote the atoms of $T_\sharp\overline\mu$. Set $A_i = T^{-1}(\{y_i\}) \cap A$ and $\hat A = \cup_i A_i$. The sets $A_i$ are mutually disjointed and $\overline\mu(\hat A)=1$ by hypothesis. For any $x\in A_i$, $T_t(x) = (e_t \circ S) (x,y_i)$, hence
\begin{equation*}
 \Leb(E) \leq \frac{1}{(1-t)^{2n+3}} \,\Leb(T_t(E))
\end{equation*}
for any $E\subset A_i$. Next if $E\subset \hat A$, writing $E = \cup_i (E\cap A_i)$ where the sets $A_i$ are mutually disjointed and remembering that $T_t$ is injective on $\hat A\subset A$ by hypothesis, one gets
\begin{equation*}
  \Leb(E) \leq \frac{1}{(1-t)^{2n+3}}\, \Leb(T_t(E)).
\end{equation*}
It follows that for any $F\subset \hn$,
\begin{equation*} \label{e:1}
 \Leb(T_t^{-1}(F)\cap \hat A) \leq \frac{1}{(1-t)^{2n+3}} \Leb(F\cap T_t(\hat A)).
\end{equation*}

Assume that $F\subset \hn$ is such that $\Leb(F)=0$. We get $\Leb(T_t^{-1}(F)\cap \hat A) = 0$ from the previous inequality. On the other hand $\overline\mu_t(F) = \overline \mu (T_t^{-1}(F)) = \overline \mu (T_t^{-1}(F)\cap \hat A)$ hence $\overline\mu_t(F)=0$ since $\overline\mu\ll\Leb$ and it follows that $\overline\mu_t \ll \Leb$. 

Next, to prove the estimate on the density of $\overline\mu_t$ with respect to $\Leb$, we note that the inequality above implies that 
\begin{equation*}
 \int_{\hat A} g(T_t) \, d\Leb \leq \frac{1}{(1-t)^{2n+3}} \int_{T_t(\hat A)} g \, d\Leb
\end{equation*}
for any non negative measurable map $g:\hn\rightarrow [0,+\infty]$. Let $h:\hn\rightarrow [0,+\infty]$ be a non negative measurable map and set 
\begin{equation*}
 g(x) = \1_{T_t(\hat A)}(x) \, h(x) \, \rho(T_t^{-1}\lfloor_{T_t(\hat A)}(x)).
\end{equation*}
Then 
\begin{equation*}
 \int_{\hat A} g(T_t(x)) \, d\Leb(x) \leq \frac{1}{(1-t)^{2n+3}} \int_{T_t(\hat A)} h(x) \,\rho(T_t^{-1}\lfloor_{T_t(\hat A)}(x)) \, d\Leb(x)
\end{equation*}
On the other hand
\begin{equation*}
 \int_{\hat A}  g(T_t) \, d\Leb = \int_{\hat A} h(T_t)\, \rho \, d\Leb = \int_{\hn} h(T_t)\, d\overline\mu = \int_{\hn} h\, d\overline\mu_t,
\end{equation*}
hence
\begin{equation*}
 \int_{\hn} h(x)\, d\overline\mu_t(x) \leq \frac{1}{(1-t)^{2n+3}} \int_{T_t(\hat A)} h(x) \, \rho(T_t^{-1}\lfloor_{T_t(\hat A)}(x)) \, d\Leb(x).
\end{equation*}
Remembering that $\hat A \subset A$, this concludes the proof.
\end{proof}

Finally, combining Theorem~\ref{existenceceps}, Propositions~\ref{injectivity} and \ref{estimsup}, we get the following proposition which gives the absolute continuity of the interpolations together with an $L^\infty$-estimate on their density. Note that if $\g_\e$ is the transport plan solution to Kantorovich transport problem \eqref{e:MK} between $\overline\mu$ and $\overline \nu$ with cost $c_\e$ and $T^\e$ the optimal transport map solution to the transport problem \eqref{e:M} between $\overline\mu$ and $\overline \nu$ with cost $c_\e$, which hence induces $\g_\e$, then $(e_t \circ S)_\sharp\g_\e=T_{t\,\sharp}^\e\overline\mu$.

\begin{prop} \label{e:Linftydensityestimates}
 Let $\overline\mu$, $\overline \nu\in \p_c(\hn)$ be fixed. Assume that $\overline\mu \ll \Leb$ with $\overline\mu = \rho \,d\Leb$ and $\overline \nu$ is finitely atomic. Let $\g_\e$ be the transport plan solution to Kantorovich transport problem \eqref{e:MK} between $\overline\mu$ and $\overline \nu$ with cost $c_\e$. Then for any $t\in [0,1)$, the interpolation $(e_t \circ S)_\sharp\g_\e$ is absolutely continuous with respect to $\Leb$, $(e_t \circ S)_\sharp\g_\e = \rho_t^\e \,d\Leb$, and one has 
\begin{equation*}
 \|\rho_t^\e\|_{L^\infty} \leq \frac{1}{(1-t)^{2n+3}}\, \|\rho\|_{L^\infty}.
\end{equation*}
\end{prop}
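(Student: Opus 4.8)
The plan is simply to chain together Theorem~\ref{existenceceps}, Proposition~\ref{injectivity} and Proposition~\ref{estimsup}. First, by Theorem~\ref{existenceceps} the solution $\g_\e$ to Kantorovich transport problem~\eqref{e:MK} between $\overline\mu$ and $\overline\nu$ with cost $c_\e$ is induced by a transport, so $\g_\e=(I\otimes T^\e)_\sharp\overline\mu$ for the optimal transport map $T^\e$. As recalled just above the statement, with $T^\e_t:=e_t\circ S\circ(I\otimes T^\e)$ one then has $(e_t\circ S)_\sharp\g_\e=T^\e_{t\,\sharp}\overline\mu$, so it is enough to estimate the density of $T^\e_{t\,\sharp}\overline\mu$.

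For $t=0$ this measure is $\overline\mu$ itself and the claimed inequality is trivial, so fix $t\in(0,1)$. By Proposition~\ref{injectivity} there is a $\overline\mu$-measurable set $A$ with $\overline\mu(A)=1$ such that $T^\e_t\lfloor_A$ is injective. Since $T^\e_\sharp\overline\mu=\overline\nu$ is finitely atomic, Proposition~\ref{estimsup} applies with $\overline\mu=\rho\,d\Leb$, $T=T^\e$ and this set $A$, and gives $T^\e_{t\,\sharp}\overline\mu\ll\Leb$, $T^\e_{t\,\sharp}\overline\mu=\rho_t^\e\,d\Leb$, with
\[
 \rho_t^\e\leq\frac{\1_{T^\e_t(A)}}{(1-t)^{2n+3}}\,\rho\circ(T^\e_t)^{-1}\lfloor_{T^\e_t(A)}\qquad\Leb\text{-a.e.}
\]
Taking essential suprema finishes the argument: on $T^\e_t(A)$ the map $(T^\e_t)^{-1}\lfloor_{T^\e_t(A)}$ is well defined (injectivity on $A$) with values in $A\subset\hn$, hence $\rho\circ(T^\e_t)^{-1}\lfloor_{T^\e_t(A)}\leq\|\rho\|_{L^\infty}$ $\Leb$-a.e., and off $T^\e_t(A)$ the right-hand side vanishes; therefore $\|\rho_t^\e\|_{L^\infty}\leq(1-t)^{-(2n+3)}\|\rho\|_{L^\infty}$.

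As for difficulty, there is essentially no obstacle remaining at this stage: all the substantial content has already been extracted --- the graph structure of the optimal plan in Theorem~\ref{existenceceps} (ultimately relying on Lemma~\ref{prop-distcc}), the injectivity of the intermediate maps in Proposition~\ref{injectivity} (relying on the non-branching property), and the contraction estimate $\Leb(E)\leq(1-t)^{-(2n+3)}\Leb((e_t\circ S)(E,y))$ behind Proposition~\ref{estimsup} (the Measure Contraction Property of $\hn$). The only point to be handled with a little care is precisely the passage from the $\Leb$-a.e. pointwise bound to the uniform bound, i.e. checking that composing $\rho$ with the measurable inverse of the injective restriction $T^\e_t\lfloor_A$ cannot increase its essential supremum, which is what the last sentence above does.
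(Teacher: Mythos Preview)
Your proof is correct and follows exactly the paper's approach: the paper does not give a separate argument but merely states that the proposition follows by combining Theorem~\ref{existenceceps}, Proposition~\ref{injectivity} and Proposition~\ref{estimsup}, which is precisely the chain you spell out. One minor remark on the last step: your justification that $\rho\circ(T^\e_t)^{-1}\lfloor_{T^\e_t(A)}\le\|\rho\|_{L^\infty}$ $\Leb$-a.e.\ tacitly uses $\rho\le\|\rho\|_{L^\infty}$ pointwise rather than only $\Leb$-a.e.; this is harmless since one may simply replace $\rho$ by $\min(\rho,\|\rho\|_{L^\infty})$ without changing $\overline\mu$ before invoking Proposition~\ref{estimsup} (or, equivalently, adopt the paper's convention~\eqref{e:density} for densities, under which the pointwise bound holds everywhere).
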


\section{Properties of measures $\g\in \p(\hn\times\hn)$ with $(\pi_1)_\sharp\g \ll \Leb$} \label{lebpoints}

This section is independent of the transport problem. We state some properties of measures in $\p(\hn\times\hn)$ with first marginal absolutely continuous with respect to $\Leb$. These properties are essential steps in the strategy adopted here to solve Monge's transport problem. They are the exact counterpart in the framework of $(\hn, d, \Leb)$ of similar properties proved in~\cite{champion-dePascale} in $\Rn$. These properties hold actually true in more general settings, for instance in any separable doubling metric measure space.

We first recall some facts about Lebesgue points of Borel functions and density of absolutely continuous measures. Since the measure $\Leb$ is a doubling measure on $(\hn,d)$, see \eqref{e:measball}, if $\rho:\hn \rightarrow [0,+\infty]$ is a $\Leb$-locally summable Borel function then for $\Leb$-a.e. $x\in \hn$, one has 
\begin{equation} \label{e:lebpoint}
 \lim_{r\rightarrow 0} \dfrac{1}{\Leb(B(x,r))} \, \int_{B(x,r)} |\rho(y) - \rho(x)|\,d\Leb(y) = 0,
\end{equation}
see e.g. \cite{heinonen}. A point $x\in \hn$ where \eqref{e:lebpoint} holds is called a Lebesgue point of $\rho$ and we denote by $\leb \rho$ the set of all Lebesgue points of $\rho$. 

In rest of this paper, and especially in the lemma to follow, it will be technically convenient to consider the density $\rho$ of an absolutely continuous measure $\mu \ll \Leb$ as a $\Leb$-summable Borel function, i.e., a function well-defined everywhere, so that one can speak about its Lebesgue points and its value at any arbitrary point without any ambiguity. If $\mu\in \p(\hn)$, we set 
\begin{equation} \label{e:density}
 \rho(x) := \limsup_{r\rightarrow 0} \dfrac{\mu(B(x,r))}{\Leb(B(x,r))}\,.
\end{equation}
This map $\rho:\hn \rightarrow [0,+\infty]$ is a Borel map and if $\mu\ll \Leb$ then $\mu = \rho \,d\Leb$. By a slight abuse of terminology, when speaking about the density of an absolutely continuous measure $\mu\in \p(\hn)$ with respect to $\Leb$, we will thus always refer in the following to the Borel function $\rho$ defined above.

The next lemma will be an essential ingredient in the proof of Lemma~\ref{mainlemma}. 

\begin{lem} \label{dens1}
 Let $\g\in \p(\hn\times\hn)$ be such that $(\pi_1)_\sharp\g \ll \Leb$. Then $\g$ is concentrated on a set $\G$ such that, for all $(x,y)\in  \G$ and all $r>0$, there exist $y'\in\hn$ and $r'>0$ such that

\renewcommand{\theenumi}{\roman{enumi}}
\begin{enumerate}
 \item $y \in B(y',r') \subset\subset B(y,r)$,

 \item $x\in \leb{\rho}$ and $\rho(x) <+\infty$,

 \item $x\in \leb{\rho'}$ and $\rho'(x)>0$,
\end{enumerate}
where $\rho$ denotes the density of $(\pi_1)_\sharp\g$ and $\rho'$ the density of $(\pi_1)_\sharp\g\lfloor(\hn\times B(y',r'))$ with respect to $\Leb$.
\end{lem}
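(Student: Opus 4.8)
The statement is a combination of three almost independent "almost everywhere" assertions, so the natural plan is to produce, for each of them, a Borel set of full $\g$-measure on which it holds, and then intersect the three sets. Write $\mu_1 := (\pi_1)_\sharp\g$, which is absolutely continuous with respect to $\Leb$ by hypothesis, and let $\rho$ be its density in the sense of \eqref{e:density}.

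First I would dispose of condition (ii). Since $\mu_1 = \rho\,d\Leb$ with $\rho$ $\Leb$-summable, the set $\leb\rho$ has full $\Leb$-measure, hence full $\mu_1$-measure, and $\{\rho < +\infty\}$ also has full $\mu_1$-measure (a finite measure cannot charge the set where its density is infinite). Pulling back along $\pi_1$, the set $G_2 := \pi_1^{-1}\big(\leb\rho \cap \{\rho<+\infty\}\big)$ has full $\g$-measure and every $(x,y)\in G_2$ satisfies (ii). Condition (i) is purely topological and costs nothing: for a fixed $(x,y)$ and $r>0$ one simply takes $y' = y$ and any $r' < r$ (say $r' = r/2$), so that $y\in B(y',r') = B(y,r/2) \subset\subset B(y,r)$; no measure-theoretic restriction on $(x,y)$ is needed for this, so condition (i) imposes no constraint at all on $\G$ beyond what we already have — I would phrase it by choosing $y'=y$ once and for all, which also simplifies the analysis of (iii) since then $B(y',r')$ depends only on $y$ and $r$.

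The substance is condition (iii), and this is where I expect the main difficulty. Here is the issue: for \emph{each} pair $(y,r)$ (equivalently each ball $B(y,r/2)$) we must control the density $\rho'$ of the measure $(\pi_1)_\sharp\big(\g\lfloor(\hn\times B(y,r/2))\big)$ at the point $x$, and we need $x$ to be simultaneously a Lebesgue point of $\rho'$ \emph{with $\rho'(x)>0$} — and this for \emph{all} $r>0$ at once, for $\g$-a.e.\ $(x,y)$. The trick, exactly as in \cite{champion-dePascale}, is to reduce the uncountable family of balls to a countable one: fix a countable dense set $D\subset\hn$ and consider only balls $B(q,s)$ with $q\in D$ and $s\in\mathbb{Q}^+$; any ball $B(y,r/2)$ contains, and is contained in, such rational balls, so it suffices to handle this countable family and then interpolate. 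For a single ball $B = B(q,s)$, set $\g_B := \g\lfloor(\hn\times B)$ and $\mu_B := (\pi_1)_\sharp\g_B$; then $\mu_B \leq \mu_1$, so $\mu_B \ll \Leb$ with density $\rho_B \leq \rho$. The key positivity claim is that $\mu_B$ is concentrated on $\leb{\rho_B}\cap\{\rho_B>0\}$: indeed $\mu_B(\{\rho_B = 0\}) = \int_{\{\rho_B=0\}}\rho_B\,d\Leb = 0$, and $\leb{\rho_B}$ has full $\Leb$- hence full $\mu_B$-measure. Therefore the set $E_B := \{x : x\in\leb{\rho_B},\ \rho_B(x)>0\}$ satisfies $\mu_B(\hn\setminus E_B)=0$, which means $\g\big(\{(x,y): y\in B,\ x\notin E_B\}\big) = 0$. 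Intersecting over all countably many rational balls $B$ gives a set $G_3$ of full $\g$-measure such that for every $(x,y)\in G_3$ and every rational ball $B\ni y$ one has $x\in\leb{\rho_B}$ and $\rho_B(x)>0$. Finally, given arbitrary $(x,y)\in G_3$ and arbitrary $r>0$, choose a rational ball $B = B(q,s)$ with $y\in B \subset\subset B(y,r)$ and set $y' := q$, $r' := s$; then (i) holds by construction, and $\rho' = \rho_B$ satisfies (iii) because $(x,y)\in G_3$. The set $\G := G_2\cap G_3$ (intersected with the diagonal-free adjustments if needed, though none are) then works; it has full $\g$-measure and every $(x,y)\in\G$ satisfies all three conclusions.

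The one genuinely delicate point, and the step I would flag as the main obstacle, is making sure the "interpolation" from the countable family of rational balls to an arbitrary ball $B(y,r)$ is done so that \emph{the same} point $y'$ and radius $r'$ serve for conclusion (i) and conclusion (iii) simultaneously — i.e.\ that one can choose a rational ball nested \emph{between} a small ball around $y$ and $B(y,r)$, which is where separability of $(\hn,d)$ and the fact that open balls form a basis of a metrizable topology are used; this is routine but must be stated carefully, since a naive choice of rational ball might satisfy the topological nesting $y\in B(q,s)\subset\subset B(y,r)$ yet one still has to know $(x,y)$ lies in the good set attached to \emph{that particular} rational ball, which is exactly why the good set $G_3$ must be built as an intersection over \emph{all} rational balls rather than over some $(x,y)$-dependent subfamily.
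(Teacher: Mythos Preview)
Your proposal is correct and follows essentially the same approach as the paper: reduce to a countable family of balls (the paper uses centers in a dense sequence and radii $1/k$, you use a dense set and rational radii), show for each such ball $B$ that the bad set $\{(x,y): y\in B,\ x\notin \leb\rho_B\cap\{\rho_B>0\}\cap\leb\rho\cap\{\rho<+\infty\}\}$ is $\g$-null, take the countable union, and finish by nesting a ball from the family between $y$ and $B(y,r)$. The only cosmetic differences are that the paper packages conditions (ii) and (iii) into a single null set $D_{m,k}$ rather than separate sets $G_2,G_3$, and that your initial remark about taking $y'=y$ is a red herring you rightly abandon in favor of $y'=q$ from the countable family.
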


\begin{proof} Let $(y_m)_{m\geq 1}$ be a dense sequence in $\hn$.
For each $m,k \in \N^*$, set $\g_{m,k}:=\gamma \lfloor (\hn \times B(y_m, r_k))$ where $r_k:=1/k$. Let $\rho_{m,k}$ denote the density of $(\pi_1)_\sharp\g_{m,k}$ with respect to $\Leb$. Set $A_{m,k} := \hn \setminus (\leb \rho \cap \leb \rho_{m,k} \cap \{\rho < +\infty\})$. We have $\Leb(A_{m,k})=0$. Since $(\pi_1)_\sharp\g \ll \Leb$, it follows that $\g(A_{m,k}\times B(y_m, r_k)) \leq (\pi_1)_\sharp\g (A_{m,k}) =0$. Next
 \begin{equation*}
  \g(\{\rho_{m,k}=0\} \times B(y_m,r_k)) = (\pi_1)_\sharp\g_{m,k}(\{\rho_{m,k}=0\}) = 0.
 \end{equation*}
It follows that $\g(D_{m,k})=0$ for all $m,k \in \N^*$ where
\begin{equation*}
 D_{m,k}:=\left[\hn \setminus (\leb \rho \cap \leb \rho_{m,k} \cap \{\rho < +\infty\} \cap \{\rho_{m,k}>0\}) \right] \times B(y_m,r_k)
\end{equation*}
hence $\g(\cup_{m,k} D_{m,k})=0$ and $\g$ is concentrated on $\hn \setminus \cup_{m,k} D_{m,k}$. Then the conclusion follows noting that for each $(x,y) \in \hn\times\hn$ and $r>0$, one can find $m,k\in \N^*$ such that $y \in B(y_m,r_k)\subset \subset B(y,r)$.
\end{proof}

We say that $x\in E$ is a Lebesgue point of a Borel set $E$ if $x\in \leb \1_E$, i.e., if $x\in E$ and 
\begin{equation*}
 \lim_{r\rightarrow 0} \dfrac{\Leb(E\cap B(x,r))}{\Leb(B(x,r))} = 1,
\end{equation*}
and we denote by $\leb E:= \leb \1_E$ the set of all Lebesgue points of $E$. Note that $\Leb(E\setminus \leb E) = 0$.

The next lemma together with Lemma~\ref{mainlemma} and Lemma~\ref{pi2} is one of the key ingredients of the proof of Theorem~\ref{mainthmbis} and eventually of the existence of a solution to Monge's transport problem. It can be recovered as a consequence of Lemma~\ref{dens1}. However, for sake of clarity, we state and prove it independently.

\begin{lem}  \label{dens2}
 Let $\g\in \p(\hn\times\hn)$ be such that $(\pi_1)_\sharp\g \ll \Leb$. Assume that $\g$ is concentrated on a $\s$-compact set $\G$. For $y\in\hn$ and $r>0$, set 
\begin{equation*}
 \G^{-1} (B(y,r)) = \pi_1(\G \cap (\hn \times B(y,r))).
\end{equation*}
Then $\G^{-1} (B(y,r))$ is a Borel set and $\g$ is concentrated on a set $\G'\subset\G$ such that for all $(x,y)\in\G'$ and all $r>0$, $x\in \leb{\G^{-1} (B(y,r)})$.
\end{lem}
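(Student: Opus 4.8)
The plan is to mimic the argument of Lemma~\ref{dens1}: reduce to countably many pairs (center, radius) from a dense family, for each such pair discard a $\g$-null product set, and then intersect over the countable family. First I would fix a countable dense sequence $(y_m)_{m\geq 1}$ in $\hn$ and radii $r_k := 1/k$, and I would record, using that $\G$ is $\s$-compact, that each $\G^{-1}(B(y_m,r_k)) = \pi_1(\G \cap (\hn \times B(y_m,r_k)))$ is a countable union of compact sets, hence Borel; consequently its indicator has a well-defined density and $\leb{\G^{-1}(B(y_m,r_k))}$ makes sense. The key elementary observation is that
\begin{equation*}
\g\big( [\hn \setminus \leb{\G^{-1}(B(y_m,r_k))}] \times B(y_m,r_k)\big) = 0
\end{equation*}
for every $m,k$: indeed, if $(x,y) \in \G$ with $y \in B(y_m,r_k)$ then $x \in \G^{-1}(B(y_m,r_k))$, so $\g$ is concentrated on $\G^{-1}(B(y_m,r_k)) \times B(y_m,r_k)$ inside $\hn \times B(y_m,r_k)$; and since $(\pi_1)_\sharp\g \ll \Leb$ while $\Leb(\G^{-1}(B(y_m,r_k)) \setminus \leb{\G^{-1}(B(y_m,r_k))}) = 0$, the first-marginal mass carried by the bad $x$-set is zero.

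Next I would set
\begin{equation*}
 D_{m,k} := \big[\hn \setminus \leb{\G^{-1}(B(y_m,r_k))}\big] \times B(y_m,r_k),
\end{equation*}
so that $\g(D_{m,k}) = 0$ by the above, hence $\g(\cup_{m,k} D_{m,k}) = 0$, and I would take $\G' := \G \setminus \cup_{m,k} D_{m,k}$, which still carries full $\g$-mass. It remains to check that $\G'$ has the stated property. Given $(x,y) \in \G'$ and $r > 0$, density of $(y_m)$ lets me pick $m,k$ with $y \in B(y_m, r_k) \subset\subset B(y,r)$; since $(x,y) \notin D_{m,k}$ we get $x \in \leb{\G^{-1}(B(y_m,r_k))}$. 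Finally I use monotonicity of $\G^{-1}$ under inclusion of balls: $B(y_m,r_k) \subset B(y,r)$ forces $\G^{-1}(B(y_m,r_k)) \subset \G^{-1}(B(y,r))$, and since a point of Lebesgue density $1$ of a smaller set is automatically a point of density $1$ of any larger set (the ratio $\Leb(E\cap B(x,\rho))/\Leb(B(x,\rho))$ only increases when $E$ grows), we conclude $x \in \leb{\G^{-1}(B(y,r))}$, as required.

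The only genuine point requiring care — the rest being a routine transcription of the Lebesgue-point machinery already used in Lemma~\ref{dens1} — is the measurability claim, namely that $\G^{-1}(B(y,r))$ is Borel; this is exactly where the hypothesis that $\G$ is $\s$-compact is used, since the continuous-image-of-compact-is-compact argument gives an $F_\sigma$ set, whereas for a merely Borel $\G$ the projection would only be analytic. I expect no other obstacle; in particular the doubling property of $\Leb$ on $(\hn,d)$, already invoked for \eqref{e:lebpoint}, is what legitimizes talking about Lebesgue points of sets here.
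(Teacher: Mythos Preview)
Your proof is correct and follows essentially the same approach as the paper's: both use a countable family of balls $B(y_m,r_k)$, the fact that non-Lebesgue points of each $\G^{-1}(B(y_m,r_k))$ form an $\Leb$-null set, and the monotonicity of density-$1$ points under set inclusion to pass from the countable family to an arbitrary $B(y,r)$. The paper packages this by bounding $\pi_1$ of the bad set directly, whereas you remove countably many product sets in the style of Lemma~\ref{dens1}; the content is identical.
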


\begin{proof}
 Since $\G$ is  $\s$-compact, $\G^{-1} (B(y,r))$ is also $\s$-compact hence a Borel set. Set $A := \{(x,y) \in \G;\,x\notin \leb\Gamma^{-1}(B(y,r)) \text{ for some } r>0 \}$
and let us show that $\g(A)=0$. For each $k\in\N^*$, consider a countable covering of 
$\hn$ by balls $(B(y_i^k,r_k))_{\,i \geq 1}$ of radius $r_k:=1/(2k)$. If $(x,y) \in \G$ and $x \notin \leb\Gamma^{-1}(B(y,r))$ then for any $k \geq 1/r$ and $y_i^k$ such that
$d(y_i^k,y)<r_k$, one has $x\in \G^{-1}(B(y_i^k,r_k)) \setminus \leb\G^{-1}(B(y_i^k,r_k))$. It follows that 
\begin{equation*}
\pi_1(A) \,\, \subset \,\, \bigcup_{k\geq 1} \, \bigcup_{\,i \geq 1} 
\G^{-1}(B(y_i^k,r_k)) \setminus \leb\G^{-1}(B(y_i^k,r_k)) .
\end{equation*}
The set on the right-hand side has $\Leb$-measure 0. Since $(\pi_1)_\sharp\g \ll \Leb$, it follows that $\g(A) \leq  (\pi_1)_\sharp\g(\pi_1(A))=0$.
\end{proof}

\section{Lower density of the transport set} \label{main}

We consider optimal transport plans in $\Pi_1(\mu,\nu)$ that are obtained as weak limit of solutions to the variational approximations introduced in Section~\ref{varapprox}. We prove that if $\g$ is such a transport plan then it is concentrated on a set $\G$ whose related transport set has positive lower density at each point $x\in \pi_1(\G)$ for some suitable notion of lower density. As already mentioned, this is one of the main ingredient in the proof of Theorem~\ref{mainthmbis}. Following the notion of transport set introduced in e.g.~\cite{ap}, we define in our geometrical context the transport set related to a set $\G\subset \hn\times\hn$ as 
\begin{equation*}
 T(\G) := \{ (e_t \circ S)(x,y);\, (x,y)\in \G, \, t\in (0,1) \}.
\end{equation*}
Recall that $S$ is a measurable selection of minimal curves and that $(e_t \circ S)(x,y)$ denotes the point at distance $t \,d(x,y)$ from $x$ on the selected minimal curve $S(x,y)$ between $x$ and $y$, see Subsection~\ref{sect-interpolation}.

\begin{lem}  \label{mainlemma}
 Let $\g\in\Pi(\mu,\nu)$ obtained as a weak limit of solutions to $(P_{\e_k})$ for some sequence $\e_k$ converging to 0. Then $\g$ is concentrated on a set $\G$ such that for all $(x,y)\in\G$ such that $x\not=y$ and all $r>0$, we have
\begin{equation*}
 \liminf_{\delta\downarrow 0} \, \dfrac{\Leb(T(\G \cap [B(x,\frac{\delta}{2}) \times B(y,r)]) \cap B(x,\delta))}{\Leb(B(x,\delta))}\, > 0.
\end{equation*}
\end{lem}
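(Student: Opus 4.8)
The strategy is to leverage the $L^\infty$-estimate on interpolations (Proposition~\ref{e:Linftydensityestimates}) together with the Lebesgue-point properties of Lemma~\ref{dens1}, exactly along the lines of~\cite{champion-dePascale}. First I would fix the set $\G$ on which $\g$ is concentrated to be one satisfying the conclusion of Lemma~\ref{dens1}; without loss of generality we may also intersect with a $\s$-compact set carrying full $\g$-mass, so that all the sets appearing below ($T(\G\cap\cdots)$ in particular) are Borel. Fix $(x,y)\in\G$ with $x\neq y$ and $r>0$. By Lemma~\ref{dens1}, there are $y'\in\hn$ and $r'>0$ with $y\in B(y',r')\subset\subset B(y,r)$, such that $x\in\leb\rho$ with $\rho(x)<+\infty$ (where $\rho$ is the density of $\mu=(\pi_1)_\sharp\g$) and $x\in\leb{\rho'}$ with $\rho'(x)>0$, where $\rho'$ is the density of $\mu':=(\pi_1)_\sharp(\g\lfloor(\hn\times B(y',r')))$.

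The key point is to transport the ball $B(x,\delta/2)$ forward \emph{a little} along the rays of $\G$ and observe that the image lands inside $T(\G\cap[B(x,\delta/2)\times B(y',r')])\cap B(x,\delta)$ for $\delta$ small, while the interpolation density estimate forces this image to have measure comparable to $\Leb(B(x,\delta))$. Concretely: consider the measure $\g\lfloor(B(x,\delta/2)\times B(y',r'))$, whose first marginal is $\mu'\lfloor B(x,\delta/2)$ and which by Lemma~\ref{restrictions}-type reasoning (applied to the approximating plans $\g_{\e_k}$ and passed to the limit) can be handled through the $c_\e$-optimal structure. For a small fixed interpolation time $t\in(0,1)$ one has $(e_t\circ S)_\sharp(\g\lfloor(B(x,\delta/2)\times B(y',r')))$ supported in $B(x,\delta)$: indeed if $x''\in B(x,\delta/2)$ and $y''\in B(y',r')$ then $d(x'',(e_t\circ S)(x'',y''))=t\,d(x'',y'')\le t(\delta/2+\diam\spt\nu+\cdots)$, which is $\le\delta/2$ once $t$ is chosen small enough depending only on $d(x,y)$ and $r$ (here we use $x\neq y$ so the relevant distances are bounded below away from $0$ after shrinking). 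Hence the image is contained in $T(\G\cap[B(x,\delta/2)\times B(y',r')])\cap B(x,\delta)$, which in turn is contained in $T(\G\cap[B(x,\delta/2)\times B(y,r)])\cap B(x,\delta)$ since $B(y',r')\subset B(y,r)$.

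To get the lower bound on the Lebesgue measure of this image, I would apply the $L^\infty$-estimate: the pushforward $(e_t\circ S)_\sharp(\g\lfloor(B(x,\delta/2)\times B(y',r')))$ is absolutely continuous with density bounded above by $(1-t)^{-(2n+3)}$ times (essentially) $\|\rho\|_\infty$ on the relevant piece --- but more precisely one wants the localized version, so that its total mass $\mu'(B(x,\delta/2))$ must spread over a set of $\Leb$-measure at least $(1-t)^{2n+3}\,\mu'(B(x,\delta/2))/\|\rho\lfloor B(x,\delta)\|_\infty$. Then
\begin{equation*}
\frac{\Leb\big(T(\G\cap[B(x,\tfrac{\delta}{2})\times B(y,r)])\cap B(x,\delta)\big)}{\Leb(B(x,\delta))}
\ \gtrsim\ (1-t)^{2n+3}\,\frac{\mu'(B(x,\delta/2))}{\Leb(B(x,\delta))}\,\frac{1}{\sup_{B(x,\delta)}\rho}.
\end{equation*}
Now let $\delta\downarrow0$: since $x\in\leb{\rho'}$ with $\rho'(x)>0$ we have $\mu'(B(x,\delta/2))/\Leb(B(x,\delta))\to c_n 2^{-(2n+2)}\rho'(x)>0$ by \eqref{e:measball}, and since $x\in\leb\rho$ with $\rho(x)<+\infty$ the essential supremum of $\rho$ over $B(x,\delta)$ (in the appropriate averaged sense coming from the Lebesgue-point property, cf.~the way $\rho$ is redefined in \eqref{e:density}) stays bounded. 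With $t$ now fixed the right-hand side is bounded below by a positive constant, which is exactly the claimed $\liminf>0$.

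The main obstacle, and the place requiring genuine care rather than routine estimation, is justifying that the localized $L^\infty$-density estimate of Proposition~\ref{e:Linftydensityestimates} applies to the restricted plan $\g\lfloor(B(x,\delta/2)\times B(y',r'))$ coming from the \emph{limit} plan $\g$ rather than an approximating $\g_{\e_k}$: one must pass the estimate to the limit, using that $\g$ is a weak limit of the $\g_{\e_k}$, that $(e_t\circ S)\lfloor_\Om$ is continuous and $\g$ is concentrated on $\Om$ (Lemma~\ref{pi1.1}), and that the restrictions $\g_{\e_k}\lfloor(\hn\times B(y',r'))$ have finitely atomic second marginals so Proposition~\ref{e:Linftydensityestimates} applies to them with the uniform bound $(1-t)^{-(2n+3)}\|\rho\|_\infty$ --- a bound independent of $\e_k$ --- which then survives in the limit by weak lower semicontinuity of mass on open sets. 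Handling the boundary effects of the balls (so that weak convergence transfers the restriction correctly) is the technical crux.
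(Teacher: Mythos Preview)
Your overall strategy is the paper's, but there is a real gap at the step where you bound the denominator $\sup_{B(x,\delta)}\rho$. The Lebesgue-point property $x\in\leb\rho$ controls \emph{averages} of $\rho$ over small balls, not the essential supremum; there is no ``appropriate averaged sense'' that saves the inequality, because the $L^\infty$-estimate of Proposition~\ref{e:Linftydensityestimates} genuinely involves $\|\overline\rho\|_{L^\infty}$ for the first-marginal density $\overline\rho$. Since the lemma does not assume $\rho\in L^\infty$, the quantity $\|\rho\lfloor B(x,\delta)\|_{L^\infty}$ may well be $+\infty$ for every $\delta>0$, and your lower bound collapses.

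The paper repairs this by introducing the level set
\[
G:=\{z\in\hn:\ \rho(z)\le 2\rho(x)\ \text{and}\ \rho'(z)\ge \tfrac12\rho'(x)\}
\]
and replacing $B(x,\delta/2)$ by $G_\delta:=G\cap B(x,\delta/2)$. Because $x\in\leb\rho\cap\leb\rho'$ with $\rho(x)<\infty$ and $\rho'(x)>0$, one has $x\in\leb G$, so $\Leb(G_\delta)\ge\tfrac12\Leb(B(x,\delta/2))$ for small $\delta$; and by construction $\|\rho\lfloor G_\delta\|_{L^\infty}\le 2\rho(x)$, which yields the uniform constant $2^{2n+4}\rho(x)$ in the interpolation bound. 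This same device also resolves the weak-convergence issue you identify at the end: the paper applies Lemma~\ref{restrictions} and Proposition~\ref{e:Linftydensityestimates} to $\g_{\e_k}\lfloor(G_\delta\times\hn)$, a restriction in the \emph{first} variable only. Since $(\pi_1)_\sharp\g_{\e_k}=\mu$ for every $k$, the sequence $\g_{\e_k}\lfloor(G_\delta\times\hn)$ converges weakly to $\g\lfloor(G_\delta\times\hn)$ by a Lusin argument (Lemma~\ref{weakres} in the paper). Your proposed restriction in the second variable, $\hn\times B(y',r')$, does not enjoy this stability because the second marginals of the $\g_{\e_k}$ are different atomic measures for each $k$; boundary-of-ball arguments do not help here. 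The further cut to $B(y',r')$ in the second factor is performed only on the limit plan $\g$, \emph{after} the $L^\infty$-bound has been established, simply by noting that $(e_t\circ S)_\sharp(\g\lfloor A_\delta)\le (e_t\circ S)_\sharp(\g\lfloor G_\delta\times\hn)$.
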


The proof below follows the line of the proof of the similar property in~\cite{champion-dePascale}. In our context it requires however some technical refinement. 

 \begin{proof}  We consider the set $\Gamma$ obtained by Lemma~\ref{dens1}, $(x,y) \in \Gamma$ with $x \neq y$ and $r>0$. Then let $y'\in\hn$ and $r'>0$ be given by Lemma~\ref{dens1} so that Lemma~\ref{dens1}(i), (ii) and (iii) hold. Using the same notations as in this lemma, we set \begin{equation*}
 G:=\{z \in \hn;\, \ \frac{1}{2}\rho'(x) \leq \rho' (z) \,\, \mbox{and} \,\,
\rho(z) \leq  2\rho(x) \}.
\end{equation*}
Then $G$ is a Borel set. We have $0< \rho'(x) \leq \rho(x)$ (remember the convention about densities of absolutely continuous measure, see~\eqref{e:density}). Since $x\in \leb \rho \cap \leb \rho'$, see Lemma~\ref{dens1}(ii) and (iii), it follows that $x\in\leb G$.

Fix $\delta>0$ such that $\delta < d(x,y)+r$ and 
\begin{equation} \label{e:reg-delta}
 \frac{1}{2}\Leb(B(x,s)) \leq \Leb(G \cap B(x,s))
\end{equation}
for all $s \in (0,\delta)$ and fix $t>0$ such that $4t(d(x,y)+r)< \delta$.

We set $G_\delta:=G\cap B(x,\frac{\delta}{2})$, $A_\delta:=G_\delta \times B(y',r')$ and 
$\gamma_{\delta}:= \gamma \lfloor A_\delta$.
We shall prove that
\begin{equation} \label{e:main1}
 \frac{\rho'(x)}{4} \, \Leb(B(x,\frac{\delta}{2})) \leq (e_t \circ S)_\sharp \gamma_{\delta} (B(x,\delta)) 
\end{equation} 
and
\begin{equation} \label{e:main2}
 (e_t \circ S)_\sharp \gamma_{\delta} (B(x,\delta)) \leq 2^{2n+4} \rho(x) \Leb (T(\Gamma \cap [ B(x,\frac{\delta}{2}) \times B(y,r)]) \cap B(x,\delta)).
\end{equation}
Then \eqref{e:main1} and \eqref{e:main2} will yield 
\begin{equation*}
2^{-(2n+6)} \frac{\rho' (x)}{\rho(x)}\, \Leb(B(x,\frac{\delta}{2})) \leq \Leb(T(\Gamma \cap [ B(x,\frac{\delta}{2}) \times B(y,r)]) \cap B(x,\delta))
\end{equation*}
for any $\delta>0$ small enough which completes the proof.

To prove \eqref{e:main1}, we note that $(\pi_1)_\sharp \gamma_{\delta} \ll \Leb$ with density bounded below by $\frac{1}{2} \rho'(x) $ $\Leb$-a.e. on $G_\delta$. Together with \eqref{e:reg-delta}, it follows that
\begin{equation*}
 \frac{\rho'(x)}{4} \, \Leb(B(x,\frac{\delta}{2})) \leq (\pi_1)_\sharp \gamma_{\delta} (B(x,\frac{\delta}{2})).
\end{equation*}
Next, by choice of $\delta$ and $t$, we have $(e_t \circ S)(z,w) \in B(x,\delta)$ for all $z \in B(x,\frac{\delta}{2})$ and $w \in B(y,r)$, hence
\begin{equation*}
 B(x,\frac{\delta}{2}) \times B(y',r') \subset B(x,\frac{\delta}{2}) \times B(y,r) \subset (e_t \circ S)^{-1} (B(x,\delta))
\end{equation*}
and it follows that 
\begin{equation*}
 (\pi_1)_\sharp \gamma_{\delta} (B(x,\frac{\delta}{2})) = \gamma_{\delta} (B(x,\frac{\delta}{2}) \times B(y',r')) \leq (e_t \circ S)_\sharp \gamma_{\delta} (B(x,\delta))
\end{equation*}
and this completes the proof of \eqref{e:main1}.

We prove now \eqref{e:main2}. By hypothesis, $\gamma$ is a weak limit of solutions $\gamma_k$ to $(P_{\varepsilon_k})$ for some sequence $\e_k$ converging to 0. For each fixed $k\in\N$, we apply Lemma~\ref{restrictions} with $U = G_\delta \times \hn$ and Proposition~\ref{e:Linftydensityestimates} with $\overline \mu = (\pi_1)_\sharp (\gamma_k \lfloor U)$ and $\overline \nu = (\pi_2)_\sharp (\gamma_k \lfloor U)$. Taking into account the fact that $(\pi_1)_\sharp (\gamma_k \lfloor U) = \mu \lfloor G_\delta$, we get that $(e_t \circ S)_\sharp (\gamma_k \lfloor G_\delta \times \hn) \ll \Leb$ with density in $L^\infty$ and whose $L^\infty$-norm is bounded by 
\begin{equation} \label{e:rhok}
 \frac{1}{(1-t)^{2n+3}} \, \|\rho \lfloor_{G_\delta} \|_{L^\infty} \leq 2^{2n+4} \rho(x).
\end{equation}
Next we check that $(e_t \circ S)_\sharp (\gamma_k \lfloor G_\delta \times \hn)$ converges weakly to $(e_t \circ S)_\sharp (\gamma \lfloor G_\delta \times \hn)$. First it follows from Lemma \ref{weakres} (to be proved below) that $\gamma_k\lfloor G_\delta \times \hn$ converges weakly to $\gamma \lfloor G_\delta \times \hn$. Then, noting that $\gamma$ and each $\gamma_k$ are concentrated on $\Omega$ and that $e_t \circ S$ is continuous on $\Om$, the claim follows from Lemma \ref{conv} (to be proved below) applied with $\overline\gamma = \gamma \lfloor G_\delta \times \hn$, $\overline\gamma_k = \gamma_k \lfloor G_\delta \times \hn$, $B=\Om$ and $f = \varphi \circ e_t \circ S$ where $\varphi \in C_b(\hn)$. The fact that $\gamma$ is concentrated on $\Omega$ follows from Lemma \ref{pi1.1}. To check that $\gamma_k$ is concentrated on $\Omega$, denote by $\{y_i^k\}_i$ the finite set of the atoms of $(\pi_2)_\sharp \gamma_k$. We have that $\gamma_k$ is concentrated on $\hn\times \{y_i^k\}_i$. On the other hand  $\gamma_k(L_{ y_i^k} \times \{y_i^k\}) \leq \gamma_k(L_{ y_i^k} \times \hn) = \mu(L_{ y_i^k}) = 0$ since $\mu \ll \Leb$. It follows that $\gamma_k$ is concentrated on $\cup_i [(\hn \setminus L_{ y_i^k}) \times \{y_i^k\}] \subset \Om$. Then, taking into account \eqref{e:rhok}, we get
\begin{equation*}
 |\int_{\hn} \varphi \,\, d(e_t \circ S)_\sharp (\gamma \lfloor G_\delta \times \hn)| \leq 2^{2n+4} \rho(x) \, \|\varphi\|_{L^1}
\end{equation*}
for every $\varphi \in C_b(\hn)$. It follows that $(e_t \circ S)_\sharp (\gamma \lfloor G_\delta \times \hn)$ is in $(L^1)'$ with density in $L^\infty$ and whose $L^\infty$-norm is bounded by $2^{2n+4} \rho(x)$. Since $(e_t \circ S)_\sharp \gamma_\delta \leq (e_t \circ S)_\sharp (\gamma \lfloor G_\delta \times \hn)$, the same holds true for $(e_t \circ S)_\sharp \gamma_\delta$. Finally we note that $\gamma_\delta$ being concentrated on $\Gamma \cap [ B(x,\frac{\delta}{2}) \times B(y',r')] \subset \Gamma \cap [ B(x,\frac{\delta}{2}) \times B(y,r)]$, the measure $(e_t \circ S)_\sharp \gamma_\delta$ is concentrated on $T(\Gamma \cap [ B(x,\frac{\delta}{2}) \times B(y',r')]) \subset T(\Gamma \cap [ B(x,\frac{\delta}{2}) \times B(y,r)])$. All together we get
\begin{equation*}
\begin{split}
 (e_t \circ S)_\sharp \gamma_{\delta} (B(x,\delta)) &= (e_t \circ S)_\sharp \gamma_{\delta} (T(\Gamma \cap [ B(x,\frac{\delta}{2})
\times B(y,r)]) \cap B(x,\delta) )\\
&\leq  2^{2n+4} \rho(x) \Leb (T(\Gamma \cap [ B(x,\frac{\delta}{2}) \times B(y,r)]) \cap B(x,\delta))
\end{split}
\end{equation*}
which proves \eqref{e:main2}.
\end{proof}

\begin{lem} \label{weakres} 
Let $X$ be a separable and locally compact Hausdorff metric space in which every open set is $\sigma$-compact. Let $(\gamma_k)_k$ be a sequence in $\p (X\times X)$
which converges weakly to some $\gamma \in \p (X\times X)$ and
such that $(\pi_1)_\sharp \gamma_k = (\pi_1)_\sharp \gamma$ for every $k\in \N$. Then for any Borel set $G \subset X$, the sequence 
$(\gamma_k \lfloor  G\times X)_k$ converges weakly to $\gamma \lfloor G\times X$.
\end{lem}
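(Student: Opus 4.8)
The plan is to reduce the statement to a one‑marginal assertion, taking advantage of the fact that the common first marginal $\mu := (\pi_1)_\sharp\gamma_k = (\pi_1)_\sharp\gamma$ does \emph{not} depend on $k$. I would first observe that it is enough to prove $\int_{G\times X} f\,d\gamma_k \to \int_{G\times X} f\,d\gamma$ for every $f\in C_b(X\times X)$ with $0\le f\le 1$: a general bounded continuous test function can be written as $\alpha f+\beta$ with $0\le f\le1$ and $\alpha,\beta$ constants, and the constant term contributes $\beta\,\mu(G)$, which is the same for every $k$ (including in the limit, since the first marginal of $\gamma$ is also $\mu$).

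I would then treat the case where $G$ is open. Since $G$ is then $\sigma$‑compact by the hypothesis on $X$, write $G=\bigcup_j K_j$ with $K_j$ compact and increasing; using local compactness of $X$ and Urysohn's lemma, pick $\phi_j\in C_c(X)$ with $\1_{K_j}\le\phi_j\le\1_G$, and after replacing $\phi_j$ by $\max(\phi_1,\dots,\phi_j)$ we may assume $\phi_j\uparrow\1_G$ pointwise. For each fixed $j$, $(x,y)\mapsto\phi_j(x)\,f(x,y)$ lies in $C_b(X\times X)$, so weak convergence of $\gamma_k$ gives $\int\phi_j f\,d\gamma_k\to\int\phi_j f\,d\gamma$ as $k\to\infty$. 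The key point, and the one I expect to be the crux, is that the truncation error is controlled \emph{uniformly in $k$}: since $0\le f\le1$ and $\1_G-\phi_j\ge0$,
\begin{equation*}
 \Bigl|\int_{G\times X} f\,d\gamma_k - \int\phi_j f\,d\gamma_k\Bigr|\ \le\ \int_X(\1_G-\phi_j)\,d\mu\ =:\ \eta_j,
\end{equation*}
and the same bound holds with $\gamma$ in place of $\gamma_k$, while $\eta_j\to0$ by monotone convergence. Hence for $\varepsilon>0$, choosing $j$ with $\eta_j<\varepsilon$ and then $k$ large gives $\limsup_k\bigl|\int_{G\times X}f\,d\gamma_k-\int_{G\times X}f\,d\gamma\bigr|\le2\varepsilon$, settling the open case.

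Finally, for a general Borel set $G$ I would invoke outer regularity of the finite Borel measure $\mu$ on the metric space $X$: for $\varepsilon>0$ there is an open $V\supset G$ with $\mu(V\setminus G)<\varepsilon$. Because $\gamma_k\bigl((V\setminus G)\times X\bigr)=\mu(V\setminus G)<\varepsilon$ for \emph{every} $k$ (and likewise for $\gamma$), passing from $G$ to $V$ changes $\int_{G\times X}f\,d\gamma_k$ by at most $\varepsilon$, again uniformly in $k$; combining this with the open case applied to $V$ and letting $\varepsilon\to0$ concludes. The only genuinely delicate ingredient is the uniform‑in‑$k$ estimate of the truncation errors, which is precisely where the assumption $(\pi_1)_\sharp\gamma_k=(\pi_1)_\sharp\gamma$ is indispensable — without it, mass could escape to the ``boundary'' of $G\times X$ in the limit; the topological hypotheses on $X$ enter only to build the cut‑offs $\phi_j\in C_c(X)$ and to guarantee outer regularity of $\mu$, and the rest is a routine $3\varepsilon$‑argument.
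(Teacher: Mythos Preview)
Your argument is correct. The key observation---that the common first marginal $\mu$ lets you control truncation errors uniformly in $k$---is exactly the heart of the matter, and your implementation via Urysohn cut-offs for open $G$ followed by outer regularity of $\mu$ for general Borel $G$ is clean and self-contained.

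The paper takes a different route: it applies Lusin's theorem to $\1_G$ with respect to $\mu$ to find, for each $\varepsilon>0$, a closed set $F_\varepsilon$ with $\mu(X\setminus F_\varepsilon)<\varepsilon$ on which $\1_G$ is continuous; then $(x,y)\mapsto\1_G(x)\varphi(x,y)$ is continuous on $F_\varepsilon\times X$, the complement has small $\gamma_k$-mass uniformly in $k$ (again because all $\gamma_k$ share the first marginal $\mu$), and the conclusion follows from a general weak-convergence lemma in \cite[Proposition~5.1.10]{ags}. So the paper handles all Borel $G$ in one stroke via Lusin, at the cost of invoking an external result, whereas you split into open and general Borel sets and argue everything by hand. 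Your version is more elementary and makes more visible use of the topological hypotheses (local compactness and $\sigma$-compactness of opens are needed precisely to build the $\phi_j\in C_c(X)$); the paper's version is shorter but less self-contained. Both arguments rest on the same essential mechanism: uniform-in-$k$ smallness of the mass that the restriction to $G\times X$ might miss, which is exactly what the shared first marginal guarantees.
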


\begin{proof} We have to prove that for any $\varphi \in C_b(X)$,
\begin{equation*}
\lim_{k\rightarrow +\infty} \int_{X\times X} \1_G(x) \varphi (x,y) \,d \gamma_k(x,y) = \int_{X \times X} \1_G(x) \varphi(x,y) \,d \gamma(x,y).
\end{equation*}

It follows from Lusin's Theorem that for any 
$\e>0$ there exists a closed set $F_\e$ such that ${\1_G} \lfloor_{F_\e}$ is continuous and $(\pi_1)_\sharp \gamma(X \setminus
F_\e) < \e$. As a consequence, for every $\e >0$, the restriction of $(x,y) \mapsto \1_G(x) \varphi (x,y)$ to $F_\e \times X$ is continuous and 
$$\limsup_{k\rightarrow +\infty} \gamma_k ((X\setminus F_\e) \times X)
= (\pi_1)_\sharp \gamma (X \setminus F_\e) < \e.$$
Then since $(x,y)\mapsto |\1_G(x) \varphi (x,y)|$ is
bounded and hence uniformly integrable with respect to
$(\gamma_k)_k$, the claim follows from \cite[Proposition 5.1.10]{ags}.
\end{proof}

\begin{lem} \label{conv}
Let $X$ be a separable metric space and $(\overline\gamma_k)_k$ be a sequence in $\p(X)$ which converges weakly 
to some $\overline\gamma \in\p(X)$. Let $f:X\to \R$ be a measurable and bounded function which is continuous 
in $B$ for some Borel set $B\subset X$ such that $\overline\gamma_k(X\setminus B)=0$ for every $k\in \N$ and $\overline\gamma(X\setminus B)=0$, then
\begin{equation*}
 \lim_{k \to \infty} \int_X f d\gamma_k= \int_X f d\gamma.
\end{equation*}
\end{lem}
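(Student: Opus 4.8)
The plan is to prove Lemma~\ref{conv}, the statement that if $\overline\gamma_k\rightharpoonup\overline\gamma$ weakly in $\p(X)$ and $f:X\to\R$ is bounded, measurable, and continuous on a Borel set $B$ carrying all the measures $\overline\gamma_k$ and $\overline\gamma$, then $\int_X f\,d\overline\gamma_k\to\int_X f\,d\overline\gamma$.

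\begin{proof}
Let $M:=\sup_X|f|<+\infty$. Fix $\e>0$. By Prokhorov's theorem (or directly, since a single finite Borel measure on a separable metric space is tight on its support modulo an approximation) the measure $\overline\gamma$ is tight, so there is a compact set $K\subset X$ with $\overline\gamma(X\setminus K)<\e$. The sequence $(\overline\gamma_k)_k$ being weakly convergent is also tight by Prokhorov, so after enlarging $K$ we may also assume $\sup_k\overline\gamma_k(X\setminus K)<\e$. Since $\overline\gamma(X\setminus B)=0$ and $\overline\gamma_k(X\setminus B)=0$ for all $k$, we may replace $K$ by $K\cap\overline{B}$ and then, using that $f$ is continuous on $B\supset K\cap B$, consider the restriction $f\lfloor_{K}$; more precisely set $g:=f$ on $K\cap B$. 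The key point is that $f$ restricted to the compact set $K\cap B$ is continuous (as the restriction of a function continuous on $B$), hence by the Tietze extension theorem there exists $h\in C_b(X)$ with $\|h\|_\infty\le M$ and $h=f$ on $K\cap B$.

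Now estimate, using that $\overline\gamma_k$ and $\overline\gamma$ are concentrated on $B$:
\begin{equation*}
\Bigl|\int_X f\,d\overline\gamma_k-\int_X f\,d\overline\gamma\Bigr|
\le \Bigl|\int_X f\,d\overline\gamma_k-\int_X h\,d\overline\gamma_k\Bigr|
+\Bigl|\int_X h\,d\overline\gamma_k-\int_X h\,d\overline\gamma\Bigr|
+\Bigl|\int_X h\,d\overline\gamma-\int_X f\,d\overline\gamma\Bigr|.
\end{equation*}
For the first term, $f=h$ on $K\cap B$ and $\overline\gamma_k(X\setminus B)=0$, so the integrand $f-h$ vanishes $\overline\gamma_k$-a.e. on $K$; since $|f-h|\le 2M$ everywhere, the first term is at most $2M\,\overline\gamma_k(X\setminus K)<2M\e$. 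Likewise the third term is at most $2M\,\overline\gamma(X\setminus K)<2M\e$. The middle term tends to $0$ as $k\to\infty$ because $h\in C_b(X)$ and $\overline\gamma_k\rightharpoonup\overline\gamma$. Hence $\limsup_{k\to\infty}\bigl|\int_X f\,d\overline\gamma_k-\int_X f\,d\overline\gamma\bigr|\le 4M\e$, and letting $\e\to 0$ gives the claim.
\end{proof}

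The main obstacle is the passage from ``$f$ continuous on $B$'' to ``$f$ approximable by a globally continuous bounded function up to the relevant error'': this is where tightness is essential, since without a compact set capturing almost all the mass one cannot invoke Tietze, and it is also where one must be careful that $f$ and the extension $h$ agree on a set of full measure for \emph{all} the $\overline\gamma_k$ simultaneously. The uniform tightness of the weakly convergent sequence $(\overline\gamma_k)_k$, guaranteed by Prokhorov's theorem in a separable metric space, is exactly what makes this work; alternatively one could appeal directly to~\cite[Proposition 5.1.10]{ags} on uniform integrability, exactly as in the proof of Lemma~\ref{weakres}, to organize the same estimate.
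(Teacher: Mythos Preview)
Your approach via tightness and Tietze extension is natural, but it has two technical gaps at the level of generality stated in the lemma.

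First, you invoke Prokhorov's theorem to get tightness of $\overline\gamma$ and uniform tightness of $(\overline\gamma_k)_k$. This requires $X$ to be Polish, not merely separable: in a separable metric space a Borel probability measure need not be tight, and a weakly convergent sequence need not be uniformly tight. In the paper's application $X=\hn\times\hn$ is Polish, so this does not affect how the lemma is used, but the lemma as stated is more general.

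Second, you apply Tietze to $f$ restricted to ``the compact set $K\cap B$''; but $K\cap B$ need not be closed (let alone compact) when $B$ is an arbitrary Borel set, so Tietze does not apply as written. Replacing $K$ by $K\cap\overline{B}$ does not help, since $K\cap B$ can still miss boundary points of $B$ lying in $K$. One can try to repair this by choosing the compact \emph{inside} $B$ via inner regularity, but then arranging a single compact $K\subset B$ that captures the mass of all the $\overline\gamma_k$ uniformly is not immediate.

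The paper sidesteps both issues with a one-line argument using semicontinuous envelopes. Writing $\overline f$ and $\tilde f$ for the lower and upper semicontinuous envelopes of $f$ on $X$, the hypothesis ``$f$ continuous in $B$'' (i.e.\ $f$ is continuous, as a function on $X$, at each point of $B$) gives $\overline f=f=\tilde f$ on $B$, hence $\overline\gamma$-a.e.\ and $\overline\gamma_k$-a.e. The Portmanteau inequalities
\[
\int_X \overline f\,d\overline\gamma\le\liminf_k\int_X \overline f\,d\overline\gamma_k,
\qquad
\limsup_k\int_X \tilde f\,d\overline\gamma_k\le\int_X \tilde f\,d\overline\gamma,
\]
valid for bounded lower/upper semicontinuous integrands under weak convergence in any metric space with no tightness needed, then sandwich $\int_X f\,d\overline\gamma_k$ and give the conclusion directly. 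This avoids Prokhorov and Tietze entirely and works in the stated generality.
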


\begin{proof} Let $\overline{f}$ and $\tilde{f}$ be respectively the lower and
 upper semicontinuous envelope of $f$. We have $\overline{f}=f=\tilde{f}$ on $B$ and hence $\gamma$-a.e. and $\gamma_k$-a.e. for every $k\in\N$. It follows that
\begin{multline*}
 \int_X f \, d\gamma = \int_X \overline f \, d\gamma \leq \liminf_{k \to \infty} \int_X \overline{f} \,d\gamma_k 
= \liminf_{k \to \infty} \int_X f  \,d\gamma_k \\
\leq \limsup_{k \to \infty} \int_X f d\gamma_k 
= \limsup_{k \to \infty} \int_X \tilde f d\gamma_k \leq \int_X \tilde f d\gamma = \int_X f \, d\gamma
\end{multline*}
which proves the claim.
\end{proof}

\section{Solution to Monge's problem} \label{conclusion}

We prove that optimal transport plans in $\Pi_1(\mu,\nu)$ that are obtained as weak limit of solutions of the variational approximations introduced in Section~\ref{varapprox} are induced by a transport, hence giving a solution to Monge's transport problem as stated in Theorem~\ref{mainthm}. Note that due to the fact that $\Pi$ is relatively compact in $\p(\hn\times\hn)$, such optimal transport plans do exist.

\begin{thm} \label{mainthmbis}
 Let $\e_k$ be a sequence converging to 0 and $\g_{\e_k}$ a sequence of solutions to $(P_{\e_k})$ which is weakly converging to some $\g\in \p(\hn\times\hn)$. Then $\g$ is concentrated on a $\mu$-measurable graph and hence induced by a transport.
\end{thm}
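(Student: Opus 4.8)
The plan is to combine the three structural ingredients already assembled: the monotonicity along transport rays (Lemma~\ref{pi2}), the lower density estimate on the transport set (Lemma~\ref{mainlemma}), and the Lebesgue-point property of the fibers of a $\s$-compact concentration set (Lemma~\ref{dens2}). By Lemma~\ref{optpi2} we know $\g\in\Pi_2(\mu,\nu)$, and by Lemma~\ref{pi1.1} that $\g$ is concentrated on $\Om$; moreover $\g$ is concentrated on a $\s$-compact set. The strategy is to produce a single $\s$-compact set $\G$ on which $\g$ is concentrated and which enjoys simultaneously the conclusions of Lemmas~\ref{pi1.2}, \ref{pi2}, \ref{mainlemma} and \ref{dens2} (intersecting the countably many full-measure sets these lemmas provide, and using inner regularity to stay $\s$-compact). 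Then I would argue that for every $x\in\pi_1(\G)$ there is exactly one $y$ with $(x,y)\in\G$, which exhibits $\G$ as a graph and finishes the proof via the last remark of Section~\ref{opttrans}.

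First I would reduce to showing that the map is single-valued off a $\mu$-null set. Suppose $(x,y)\in\G$ and $(x,y')\in\G$ with $y\neq y'$ and, say, $x\neq y$ (the $\mu$-measure of points with $x=y$ being transported to a distinct point is zero on a monotone set; this is a minor bookkeeping point). Pick $r>0$ small enough that $y'\notin \overline{B(y,r)}$. The key geometric observation, coming from Lemma~\ref{pi2}, is this: if $(z,w)\in\G$ with $z\in B(x,\tfrac{\delta}{2})$, $w\in B(y,r)$, and $z$ lies in the interior of the transport ray of $(x,y)$, i.e.\ $z=(e_t\circ S)(x,y)$ for some $t\in(0,1)$, then all of $x,z,y,w$ lie on a common minimal curve \emph{in that order}; in particular the minimal curve from $z$ emanates in the direction away from $x$, so the transport ray of $(z,w)$ cannot re-enter a small ball around $x$. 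Concretely, points of $T(\G\cap[B(x,\tfrac{\delta}{2})\times B(y,r)])\cap B(x,\delta)$ must lie on transport rays through $x$ itself (by non-branching, Proposition~\ref{nonbranching}): any such point is $(e_s\circ S)(z,w)$ for suitable $(z,w)$, lies on the common minimal curve through $x$ and $z$, hence on the ray of $(x,y)$ extended, and being within $\delta$ of $x$ while $\delta<d(x,y)+r$, it lies on the ray $e_\bullet(S(x,\tilde y))$ for some $\tilde y$. Doing the same construction with $y'$ in place of $y$ (using a radius $r'$ with the two balls $B(y,r)$, $B(y',r')$ disjoint) yields a second one-dimensional set of positive $\Leb$-density at $x$, contained in the union of transport rays through $x$ in the direction of $y'$.

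The contradiction then comes from comparing two lower-density statements at the same point $x$. On one hand, Lemma~\ref{mainlemma} gives
\begin{equation*}
\liminf_{\delta\downarrow0}\frac{\Leb(T(\G\cap[B(x,\tfrac{\delta}{2})\times B(y,r)])\cap B(x,\delta))}{\Leb(B(x,\delta))}>0
\qquad\text{and}\qquad
\liminf_{\delta\downarrow0}\frac{\Leb(T(\G\cap[B(x,\tfrac{\delta}{2})\times B(y',r')])\cap B(x,\delta))}{\Leb(B(x,\delta))}>0 .
\end{equation*}
On the other hand, by the discussion above the first of these sets is contained, up to $\Leb$-null sets, in $\G^{-1}(B(y,r))$-type fibers reached through $x$ along rays pointing towards $y$, and the second along rays pointing towards $y'$; since $x$ is a Lebesgue point of $\G^{-1}(B(y,r))$ with density either $0$ or $1$ (Lemma~\ref{dens2}), and the transport rays in the two directions are genuinely transverse at $x$ (they issue from $x$ with different initial horizontal velocities, by Theorem~\ref{geod}(ii) and Lemma~\ref{prop-distcc}, since distinct minimal curves from a common point off the center line have distinct $\chi$), these two positive-density sets would have to overlap in a set of positive measure arbitrarily close to $x$, forcing a point lying simultaneously on a ray towards $y$ and a ray towards $y'$ through $x$ — contradicting non-branching. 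Hence $y=y'$.

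\textbf{Main obstacle.} The delicate point is not the soft measure theory (intersecting full-measure sets, preserving $\s$-compactness, Lemma~\ref{dens2} giving $0$--$1$ density) but making rigorous the assertion that the transport set $T(\G\cap[B(x,\tfrac{\delta}{2})\times B(y,r)])\cap B(x,\delta)$ is, up to the scale $\delta$ and up to $\Leb$-null error, confined to a ``cone'' of transport rays issuing from $x$ in a direction close to that of the minimal curve $S(x,y)$. This requires carefully chaining Lemma~\ref{pi2} (ordering along a common minimal curve) with the non-branching property and a continuity/compactness argument on the directions of minimal curves, and then quantifying transversality so that the two cones (towards $y$ and towards $y'$) are $\Leb$-essentially disjoint in $B(x,\delta)$ for $\delta$ small. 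This is exactly the step where the geometry of $\hn$ — the explicit description of geodesics in Theorem~\ref{geod} and the $C^\infty$ regularity of $d_y$ off $L_y$ in Lemma~\ref{prop-distcc} — is used, and where the analogue of the $\R^n$ argument of~\cite{champion-dePascale} needs its genuine adaptation.
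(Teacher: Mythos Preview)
Your assembly of the ingredients is off, and the ``main obstacle'' you flag is not a technicality but a symptom that the approach cannot work as stated. The claim that $T(\G\cap[B(x,\tfrac{\delta}{2})\times B(y,r)])\cap B(x,\delta)$ is confined to rays through $x$ itself is false: a point in that set has the form $(e_s\circ S)(z,w)$ with $z\in B(x,\tfrac{\delta}{2})$ and $w\in B(y,r)$, and there is no reason for such a $z$ to lie on the minimal curve from $x$ to $y$, so Lemma~\ref{pi2} does not apply to the pair $(x,y),(z,w)$. In fact the transport set \emph{cannot} be confined to a thin cone with apex $x$, since such a cone has density zero at $x$ in $\hn$ and that would contradict Lemma~\ref{mainlemma} itself. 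The positive density is there precisely because the transport set is genuinely $(2n+1)$-dimensional near $x$; your transversality picture dissolves.

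The paper combines the same three lemmas, but \emph{asymmetrically}. Assume $(x_0,y_0),(x_0,y_1)\in\G'$ with $d(x_0,y_0)\le d(x_0,y_1)$. Lemma~\ref{dens2} says $\G^{-1}(B(y_0,r))$ has density~$1$ at $x_0$ (not ``$0$ or $1$'': $x_0$ belongs to the set and is a Lebesgue point of it). Lemma~\ref{mainlemma} says $T(\G\cap[B(x_0,\tfrac{\delta}{2})\times B(y_1,r)])\cap B(x_0,\delta)$ has positive lower density at $x_0$. A density-$1$ set and a positive-density set must intersect, so one picks a single point $x'$ in both. Then $(x',y')\in\G$ for some $y'\in B(y_0,r)$, while $x'$ lies on the minimal curve from some $x\in B(x_0,\tfrac{\delta}{2})$ to some $y\in B(y_1,r)$ with $(x,y)\in\G$. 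Now Lemma~\ref{pi2} \emph{does} apply to $(x,y)$ and $(x',y')$ and forces $x,x',y,y'$ onto one minimal curve in that order, in particular $d(x,y)\le d(x,y')$. But $y'$ is near $y_0$ (the closer target) and $y$ near $y_1$ (the farther one), and elementary triangle-inequality estimates give $d(x,y')<d(x,y)$ for $r,\delta$ small --- a contradiction. A separate short estimate handles the case $d(x_0,y_0)=d(x_0,y_1)$. No cones, no transversality, and no use of the explicit geodesic formulas beyond non-branching is needed.
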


\begin{proof}
First we know from Lemma~\ref{optpi2} that $\g\in \Pi_2(\mu,\nu)$. From the previous sections and using inner regularity of Borel probability measures, one can then find $\s$-compact sets $\G$ and $\G'$ such that $\G'\subset \G \subset \Omega$ and the conclusions of Lemma~\ref{pi2}, Lemma~\ref{dens2} and Lemma~\ref{mainlemma} hold. We prove here that for any $x\in \pi_1(\G')$ there is a unique $y\in \hn$ such that $(x,y)\in \G'$.

By contradiction, assume that one can find $x_0 \in \pi_1(\G')$ and $(x_0,y_0)\in \G$, $(x_0,y_1)\in \G$ with $y_0\not=y_1$. Without loss of generality one can assume that $d(x_0,y_0) \leq d(x_0,y_1)$ and $x_0\not=y_1$. Then, by Lemma \ref{dens2} and Lemma \ref{mainlemma}, for all $r>0$ and for all $\delta>0$ small enough, one can find $x'\in B(x_0,\delta) \cap \G^{-1} (B(y_0,r)) \cap T(\G \cap [B(x_0,\frac{\delta}{2}) \times B(y_1,r)])$. It follows that one can find $y' \in B(y_0,r)$ such that $(x',y') \in \G$ and $(x,y) \in \G \cap (B(x_0,\frac{\delta}{2})\times B(y_1,r))$ such that $x\not=y$, $x'\not=x$ and $x'$ lie on the minimal curve between $x$ and $y$. Then it follows from Lemma~\ref{pi2} that $x$, $x'$, $y$ and $y'$ lie on the same minimal curve ordered in that way.

Assume first that $ d(x_0,y_0) < d(x_0,y_1)$. We know from Lemma \ref{pi2} that $d(x,y)\leq d(x,y')$. On the other hand, we have
\begin{equation*}
\begin{split}
 d(x,y')&\leq d(x,x_0) + d(x_0,y_0) + d(y_0,y')\\
& \leq d(x_0,y_0) + \dfrac{\delta}{2} + r \\
&= d(x_0,y_1) + d(x_0,y_0) - d(x_0,y_1) + \dfrac{\delta}{2} + r \\
&\leq  d(x_0,x) + d(x,y) + d(y,y_1) + d(x_0,y_0) - d(x_0,y_1) + \dfrac{\delta}{2} + r\\
&\leq d(x,y) + d(x_0,y_0) - d(x_0,y_1) + \delta + 2r.
\end{split}
\end{equation*}
It follows that $d(x,y')< d(x,y)$ provided we take $r>0$ and $\delta>0$ small enough which gives a contradiction. If $ d(x_0,y_0) = d(x_0,y_1)$, we have
\begin{equation*}
\begin{split}
d(x_0,y_1) &\leq d(x_0,x) + d(x,y) + d(y,y_1) \\
& = d(x_0,x) + d(x,y') - d(y',y) +  d(y,y_1)\\
& \leq d(x,y') - d(y',y) + \dfrac{\delta}{2} + r ,
\end{split}
\end{equation*}
\begin{equation*}
 d(x,y') \leq d(x,x_0) + d(x_0,y_0) + d(y_0,y') \leq d(x_0,y_0) + \dfrac{\delta}{2} + r,
\end{equation*}
\begin{equation*}
 d(y',y) \geq d(y_0,y_1) - d(y_0,y') - d(y_1,y)\geq d(y_0,y_1) -2r,
\end{equation*}
hence,
\begin{equation*}
 d(x_0,y_1) \leq d(x_0,y_0) - d(y_0,y_1) + 4r +\delta. 
\end{equation*}
It follows that $d(x_0,y_1) < d(x_0,y_0)$ provided we take $r>0$ and $\delta>0$ small enough which gives also a contradiction.
\end{proof}

\section{Extension to more general metric measure spaces} \label{extensions}

First we note that a major part of intermediate steps in the strategy adopted in the present paper can be naturally extended to Polish and non-branching geodesic spaces equipped with a reference measure for which the Lebesgue's differentiation theorem holds. 

Next our choice of approximating costs $c_\e$ in the approximation procedure is not the only possible one. This choice could in particular be adapted to fit other contexts (for instance concerning the relevant properties of solutions to the transport problem associated to the approximating cost).

Finally the Measure Contraction Property is here technically very convenient. We note however that this property is unnecessarily too strong for what is actually needed in the proof about the lower density of the transport set. Much local and weaker versions about the behavior of the measure of sets transported along minimal curves are indeed sufficient as clearly shows up from the proof.

This approach can in particular be adapted to give an alternative proof of the existence of solutions to Monge's transport problem in the Riemannian setting without using Sudakov's type arguments.

For the reasons listed above it is furthermore very likely that the present strategy could be adapted and extended to other geodesic metric spaces.



\begin{thebibliography}{[99]}

\bibitem{ambrosio} L.~Ambrosio, \textit{Lecture notes on optimal transport problems.} Mathematical aspects of evolving interfaces (Funchal, 2000), 1--52, Lecture Notes in Math., 1812, Springer, Berlin, 2003. 

\bibitem{ags} L.~Ambrosio, N.~Gigli, G.~Savar\'e, \textit{Gradient flows in metric spaces and in the space of probability measures.} Lectures in Mathematics ETH Z\"urich. Birkh\"auser Verlag, Basel, 2005. viii+333 pp.

\bibitem{akp} L.~Ambrosio, B.~Kirchheim, A.~Pratelli, \textit{Existence of optimal transport maps for crystalline norms.} Duke Math. J. 125 (2004), no. 2, 207--241. 

\bibitem{ap} L.~Ambrosio, A.~Pratelli, \textit{Existence and stability results in the $L\sp 1$ theory of optimal transportation.}  Optimal transportation and applications (Martina Franca, 2001),  123--160, Lecture Notes in Math., 1813, Springer, Berlin, 2003.

\bibitem{ar} L.~Ambrosio, S.~Rigot, \textit{Optimal mass transportation in the Heisenberg group.} J. Funct. Anal. 208 (2004), no. 2, 261--301. 

\bibitem{ab} G.~Anzellotti, S.~Baldo, \textit{Asymptotic development by $\Gamma$-convergence.}  Appl. Math. Optim.  27  (1993),  no. 2, 105--123.

\bibitem{bianc-cav} S.~Bianchini, F.~Cavalletti, \textit{The Monge Problem For Distance Cost In Geodesic Spaces}, Preprint.

\bibitem{bel} Sub-Riemannian geometry. Edited by André Bella\"iche and Jean-Jacques Risler. Progress in Mathematics, 144. Birkh\"auser Verlag, Basel, 1996.

\bibitem{caravenna1} L.~Caravenna, \textit{A proof of Sudakov theorem with strictly convex norms.} Preprint.

\bibitem{cfm} L.~Caffarelli, M.~Feldman, R.J.~McCann, \textit{Constructing optimal maps for Monge's transport problem as a limit of strictly convex costs.} J. Amer. Math. Soc. 15 (2002), no. 1, 1--26.

\bibitem{champion-dePascale-first} T.~Champion, L.~De Pascale, \textit{The Monge problem for strictly convex norm in $\R^d$}, to appear on  J. Eur. Math. Soc. 

\bibitem{champion-dePascale} T.~Champion, L.~De Pascale, \textit{The Monge problem in $\R^d$.} Preprint.

\bibitem{evans-gangbo} L. C.~Evans, W.~Gangbo, \textit{Differential equations methods for the Monge-Kantorovich mass transfer problem.} Mem. Amer. Math. Soc. 137 (1999), no. 653, viii+66 pp. 

\bibitem{FeldMcC} M.~Feldman, R.J.~McCann, \textit{Monge's transport problem on a Riemannian manifold.} Trans. Amer. Math. Soc. 354 (2002), no. 4, 1667--1697.

\bibitem{figjuil} A.~Figalli, N.~Juillet, \textit{Absolute continuity of Wasserstein geodesics in the Heisenberg group.}  J. Funct. Anal.  255  (2008),  no. 1, 133--141.

\bibitem{gaveau} B.~Gaveau, \textit{Principe de moindre action, propagation de la chaleur et estimées sous elliptiques sur certains groupes nilpotents.}  Acta Math.  139  (1977), no. 1-2, 95--153. 

\bibitem{heinonen} J.~Heinonen, \textit{Lectures on analysis on metric spaces.} Universitext. Springer-Verlag, New York, 2001. x+140 pp.

\bibitem{juillet} N.~Juillet, \textit{Geometric Inequalities and Generalized Ricci Bounds in the Heisenberg Group.} Int Math Res Notices, 13 (2009), 2347--2373

\bibitem{mont} R.~Montgomery, \textit{A tour of subriemannian geometries, their geodesics and applications.} Mathematical Surveys and Monographs, 91. American Mathematical Society, Providence, RI, 2002. xx+259 pp. 

\bibitem{monti} R.~Monti, \textit{Some properties of Carnot-Carathéodory balls in the Heisenberg group.}  Atti Accad. Naz. Lincei Cl. Sci. Fis. Mat. Natur. Rend. Lincei (9) Mat. Appl.  11  (2000),  no. 3, 155--167 (2001).

\bibitem{pansu} P.~Pansu, \textit{M\'etriques de Carnot-Carath\'eodory et quasiisom\'etries des espaces sym\'etriques de rang un.} Ann. of Math. (2)  129  (1989),  no. 1, 1--60. 

\bibitem{sant} F.~Santambrogio, \textit{Absolute continuity and summability of transport densities: simpler proofs and new estimates.} preprint 2009.

\bibitem{sudakov} V. N.~Sudakov, \textit{Geometric problems in the theory of infinite-dimensional probability distributions.} Proc. Steklov Inst. Math. 1979, no. 2, i--v, 1--178. 

\bibitem{TrudWang} N. S.~Trudinger, X.J.~Wang, \textit{On the Monge mass transfer problem.} Calc. Var. Partial Differential Equations 13 (2001), no. 1, 19--31. 

\bibitem{villani} C.~Villani, \textit{Optimal transport. Old and new.} Grundlehren der Mathematischen Wissenschaften, 338. Springer-Verlag, Berlin, 2009. xxii+973 pp.

\end{thebibliography}
\end{document}